\definecolor{blue}{rgb}{0,0,0.9}
\definecolor{red}{rgb}{0.9,0,0}
\definecolor{green}{rgb}{0,0.9,0}
\newcommand{\cX}{{\cal X}}
\newcommand{\cO}{{\cal O}}
\newcommand{\cY}{{\cal Y}}
\newcommand{\cT}{{\cal T}}
\newcommand{\cJ}{{\cal J}}
\newcommand{\cH}{{\cal H}}
\newcommand{\cM}{{\cal M}}
\newcommand{\cN}{{\cal N}}
\newcommand{\cP}{{\cal P}}
\newcommand{\cA}{{\cal A}}
\newcommand{\cQ}{{\cal Q}}
\newcommand{\cI}{{\cal I}}
\newcommand{\inprod}[2]{\langle #1 , #2 \rangle }
\newcommand{\bc}{\begin{center}}
\newcommand{\ec}{\end{center}}
\newcommand{\R}{\mathbb R}
\newcommand{\be}{\begin{equation}}
\newcommand{\ee}{\end{equation}}
\newcommand{\beaa}{\begin{eqnarray*}}
\newcommand{\eeaa}{\end{eqnarray*}}
\newcommand{\ben}{\begin{enumerate}}
\newcommand{\een}{\end{enumerate}}
\newcommand{\db}{\hspace*{\fill}{\zapf o}}
\newcommand{\bpn}{\begin{proposition}\twlsf}
\newcommand{\epn}{\db\end{proposition}}
\newcommand{\bdm}{\begin{displaymath}}
\newcommand{\edm}{\end{displaymath}}
\newcommand{\ba}{\begin{array}}
\newcommand{\ea}{\end{array}}
\newcommand{\argmin}{\mathop{\rm argmin}}
\def\texitem#1{\par\smallskip\noindent\hangindent 25pt
               \hbox to 25pt {\hss #1 ~}\ignorespaces}
\newcommand{\norm}[1]{\left\lVert#1\right\rVert}
\newtheorem{assumption}{Assumption}
\newtheorem{lemma}{Lemma}
\newtheorem{proposition}{Proposition}
\newtheorem{remark}{Remark}
\newtheorem{theorem}{Theorem}
\newcommand{\eps}{\epsilon}
\def\mc{\multicolumn}
\def\sig{\sigma}
\def\inprod#1#2{\langle#1,\,#2\rangle}
\def\norm#1{\|#1\|}
\def\grad{\nabla}
\def\gam{\gamma}
 \def\cL{{\cal L}} \def\cI{{\cal I}} 
 \def\cQ{{\cal Q}}
\def\P{\mathbb{P}}
\def\ome{\omega}
\DeclareMathOperator*{\minimax}{minimax}
\begin{document}

\title{ An asymptotically {superlinearly convergent semismooth Newton
augmented Lagrangian method for Linear Programming}}
\date{March 19, 2020}

\author{Xudong Li\thanks{School of Data Science, Fudan University, Shanghai, China ({\tt lixudong@fudan.edu.cn}); Shanghai Center for Mathematical Sciences, Fudan University, Shanghai, China.	
}%
	\and 
	Defeng Sun\thanks{Department of Applied Mathematics, The Hong Kong Polytechnic University, Hung Hom,
		Hong Kong ({\tt defeng.sun@polyu.edu.hk}).
	}
\and \ Kim-Chuan Toh\thanks{Department of Mathematics, and Institute of Operations Research and Analytics, National University of Singapore, Singapore
({\tt mattohkc@nus.edu.sg}).
This author's research is partially supported by the Academic Research Fund of the Ministry of Singapore
under Grant R146-000-257-112. 
}}

\maketitle

\begin{abstract}
Powerful interior-point methods (IPM) based commercial solvers, such as Gurobi and Mosek, have been hugely successful in solving large-scale linear programming {(LP) problems}. The high efficiency of these solvers depends critically on the sparsity of the problem data and advanced matrix factorization techniques.
For a large scale LP problem with data matrix $A$ that is dense (possibly structured) or whose corresponding
normal matrix $AA^T$ has a dense Cholesky factor (even with re-ordering), these solvers may require excessive computational cost and/or extremely heavy memory usage in each interior-point iteration.
Unfortunately, the natural remedy, i.e., the use of iterative methods based IPM solvers, although can avoid the explicit computation of the coefficient matrix and its factorization,  is not practically viable due to the inherent
extreme ill-conditioning of the large scale normal equation arising in each interior-point iteration. To provide a better
alternative choice for  solving large scale LPs with dense data or requiring expensive factorization of its normal equation, we propose a semismooth Newton based inexact proximal augmented Lagrangian ({\sc Snipal}) method.
Different from  classical IPMs, in each iteration of {\sc Snipal}, iterative methods can
efficiently be used to solve simpler yet better conditioned semismooth Newton linear systems. Moreover, {\sc Snipal} not only enjoys a fast  asymptotic  superlinear convergence but {is also proven to enjoy a} finite termination property. Numerical comparisons with Gurobi have demonstrated encouraging potential of {\sc Snipal} for handling large-scale LP problems where the constraint matrix $A$ has a dense  representation or $AA^T$ has a
dense factorization even with an appropriate re-ordering.
For a few large LP instances arising from correlation clustering,
our algorithm can be up to {$20-100$} times faster than 
{the barrier method implemented in Gurobi for solving
the problems to the accuracy of $10^{-8}$ in the
relative KKT residual. {However, when tested on some large sparse LP problems available in
the public domain, our algorithm is not yet practically competitive against the barrier method in Gurobi, especially
when the latter can compute the Schur complement matrix and its sparse Cholesky factorization in each iteration cheaply.}
}
\end{abstract}

\medskip
\noindent{{\bf Keywords}: Linear programming, semismooth Newton method, augmented Lagrangian method}

\noindent
{\bf AMS subject classifications:} 90C05, 90C06, 90C25, 65F10

\section{Introduction}

It is well known that primal-dual interior-point methods (IPMs) as implemented
in highly optimized commercial solvers, such as Gurobi and Mosek, are powerful methods
for solving large scale linear programming (LP) problems with conducive sparsity. However, 
the large scale normal (also called Schur complement) equation arising in each interior-point iteration is generally highly ill-conditioned 
when the barrier parameter is small, and typically it is necessary to employ
 a direct method, such as the sparse Cholesky factorization, to solve
the equation stably and accurately. Various attempts, {for example}
in \cite{BGZ-04, Chai-Toh-07,CMTH-16,Gondzio-08,Oliveira-05},
 have been made in using an iterative solver, such as the preconditioned conjugate-gradient (PCG) method,
to solve the normal equation when it is too expensive to compute
{the coefficient matrix or}
the sparse Cholesky
factorization because of excessive computing time or memory usage due to fill-ins.
For more details on the numerical performance of iterative methods based IPMs
for solving large scale LP, we refer the readers to \cite{CMTH-16} and the references therein.
However, the
extreme ill-conditioning of the normal equation (and also of the augmented equation)
makes it extremely costly for an iterative method to solve the equation either because
it takes excessive number of steps to converge or because constructing an effective
preconditioner is prohibitively expensive.
{For a long time since their inceptions}, iterative methods based IPMs have
not been  proven convincingly to be more efficient in general than the
highly powerful solvers, such as Gurobi and Mosek,
on various large scale LP test instances.
{Fortunately, recent promising progress has been made in
the work of Schork and Gondzio \cite{Schork-Gondzio} where the authors
proposed effective basis matrix preconditioners for iterative methods based IPMs. 
However, we should note that as the construction of the basis matrix preconditioners in \cite{Schork-Gondzio}  
requires the explicit storage of the constraint matrix $A$, the approach may not be
applicable to the case when $A$ is not explicitly given but
defined via a linear map. In contrast,  the algorithm designed
in this paper is still applicable under the latter scenario.}
{For later discussion, here we give an example where $A$ is defined by a linear map: $A\in \R^{n^2} \to \R^{p^2}$
such that $Ax = {\rm vec} (B\, {\rm mat}(x)\, D^T)$, where  $B,D\in \R^{p\times n}$ are given matrices, ${\rm mat}(x)$  denotes the
operation of converting a vector $x\in\R^{n^2}$ into an $n\times n$ matrix and
${\rm vec}(X)$ denotes the operation of converting a matrix $X\in \R^{p\times p}$ into a $p^2$-dimensional vector.
It is easy to see that the matrix representation of $A$ is the kronecker product $D\otimes B$, and it could be 
extremely costly to store {$D\otimes B$} explicitly when $B,D$ are large dimensional dense matrices. 
}

The goal of this paper is to design a semismooth Newton inexact proximal augmented Lagrangian ({\sc Snipal}) method
for solving large scale LP problems, which {has} the following key properties:  (a) The {\sc Snipal} method
can achieve fast local linear convergence; (b) The semismooth Newton equation
arising in each iteration can fully exploit {the solution sparsity in addition to data
sparsity}; (c) The semismooth Newton equation
is typically much better conditioned than its counterparts
in IPMs, even when the iterates approach optimality.
The latter two properties {thus} make it cost effective for one to use an iterative method, such as the PCG method,
to solve
the aforementioned linear system when it is large.
It is these three key properties that give the competitive advantage of our {\sc Snipal} method
over the highly developed IPMs for solving certain classes of large scale LP problems
which we will describe shortly.

 Consider the following primal and dual LP problems:
\begin{eqnarray*}
\mbox{(P)} & \min\Big\{ c^T x + \delta_K(x) \mid Ax = b ,\; x\in \R^n\Big\}&
\\[5pt]
\mbox{(D)} &\max\Big\{ -\delta_K^* (A^*y - c) + b^Ty \mid y\in \R^m \Big\} &
\end{eqnarray*}
where $A\in \R^{m\times n}$, $b\in \R^m$, $c\in \R^n$ are given data.
The set $K = \{ x\in\R^n\mid l\leq x \leq u \}$ is
 a simple polyhedral set,
where $l,u$ are given vectors.
We allow the components of $l$ and $u$ to be $-\infty$ and $\infty$, respectively.
In particular, $K$ can model the nonnegative {orthant} $\R^n_+$.
In the above, $\delta_K(\cdot)$ denotes the
indicator function over the set $K$ such that $\delta_K(x) = 0$ if $x\in K$ and
$\delta_K(x) = \infty$ otherwise. The Fenchel conjugate of $\delta_K$ is denoted
by $\delta_K^*$.
{We note that while we focus on the indicator function $\delta_K(\cdot)$ in (P), the
algorithm and theoretical results we have developed in this paper are also
applicable when $\delta_K$ is replaced by a closed convex polyhedral function $p :\R^n \to (-\infty,\infty]$.}
We  {make} the following assumption on the problems (P) and (D).
\begin{assumption}
	\label{assump:fesPD}
The solution set of (P) and (D) is nonempty and
 $A$ has full row rank (hence $m\leq n$).
\end{assumption}

Our {\sc Snipal} method is designed for the dual LP but the primal variable is also
generated in each iteration.
In order for the fast local convergence property to kick-in early, we warm-start the {\sc Snipal} method
by an alternating direction method of multipliers (ADMM), which is also applied to the dual LP.
We should mention that our goal is not to use {\sc Snipal} as a general purpose solver for LP
but to complement  the excellent general solvers (Gurobi and Mosek)
when {the latter} are too expensive  or have difficulties in solving very large scale problems
due to memory limitation.
In particular, we are interested in solving large scale LP problems  having one of the following characteristics.
\begin{enumerate}
\item The number of variables $n$ in (P)
is significantly larger than the number of linear constraints $m$. We {note that
such a property is not restrictive since for a primal problem with a huge number of inequality
constraints $Ax \leq b$ and $m \gg n$}, we can treat the
dual problem (D) as the primal LP, and the required property is satisfied.

\item The constraint matrix $A$ is large and dense but it has an economical representation such as
being the Kronecker product of two matrices, {or $A$ is sparse but $AA^T$ has
a dense {factorization  even with an appropriate re-ordering}}. For such an LP problem, it may not be possible to solve it by
using the standard interior-point methods  implemented in Gurobi or Mosek since $A$ cannot be
stored explicitly. {Instead}, one would need to use a Krylov subspace iterative method
to solve the underlying large and dense linear system of equations arising in each iteration of an IPM or
{\sc Snipal}.

\end{enumerate}

In \cite{Wright-90}, Wright proposed an algorithm for solving the primal problem (P) for the
special case where $K=\R^n_+$.
The {proposed} method is in fact the
{proximal method of multipliers}
applied to (P) while keeping the
nonnegative constraint in the quadratic programming (QP) subproblem. More specifically, suppose {that the iterate} at the $k$th iteration is
$(x^k,y^k)$ and the penalty parameter is $\gam_k = \sig_k^{-1}$. Then the QP subproblem
is given by $\min \{ \frac{1}{2}\inprod{(\sig_k A^*A + \sig_k^{-1} I_n)x}{x} +
\inprod{x}{c - A^* y^k -\sig_k^{-1} x^k - \sig_k A^* b} \mid x \geq 0\}$.
In \cite{Wright-90}, an SOR (successive over-relaxation) method is used to solve the
QP subproblem. But it is unclear how this subproblem can be solved efficiently
when $n$ is large.
In contrast, in this paper, we propose a semismooth Newton based inexact proximal augmented Lagrangian  {({\sc Snipal})} method that  is  applied to the dual problem (D) and the
associated subproblems are solved efficiently by a semismooth Newton method {having at least {local} superlinear convegence or even} quadratic
convergence.

In the pioneering work of De Leone and Mangasarian \cite{Mangasarian-88},
an augmented Lagrangian method is applied to an equivalent reformulation of (D), and the QP subproblem
of the form $\min\{ -b^T y + \frac{\sig}{2}\norm{A^* y + z -c + \sig^{-1} x^k}^2 \mid y\in\R^m, z\geq 0 \}$
in each iteration is solved by a projected SOR method.
Interestingly, in a later paper \cite{Mangasarian-04}, {based on the results obtained in \cite{Mangasarian-79}}, Mangasarian designed
a generalized Newton method to first solve
a penalty problem of the form
$\min \{ -\eps b^T y + \frac{1}{2}\norm{\Pi_{\R^n_+}(A^* y - c)}^2\}$
and then use its solution
to indirectly solve (P) for $K=\R^n_+$, under the condition that the positive parameter
$\eps$ must be below a certain unknown threshold and a strong uniqueness condition holds.  Soon after,
\cite{EGM-05} observed that the restriction on the parameter in \cite{Mangasarian-04}
can be avoided by modifying the procedure in \cite{Mangasarian-04}
via the augmented Lagrangian method {but} the corresponding subproblem in each
iteration must be solved {\em exactly}.
As the generalized Newton system is likely to be singular,
in both \cite{Mangasarian-04} and \cite{EGM-05}, the system is modified by adding a scalar multiple of the identity matrix to the generalized Hessian. {Such a perturbation, however, would} destroy the fast local convergence property of the generalized
Newton method. {We also note that to obtain the minimum norm solution of the primal problem (P), \cite{Kanzow2003minimum} proposed a generalized Newton method for solving $\min \{\frac{1}{2}\norm{\Pi_{\R_+^n}(A^*y - rc)}^2 - \inprod{b}{y} \}$ with the positive parameter $r$ being sufficiently large. Although \cite{Kanzow2003minimum} contains no computational results, the authors obtained the global convergence and finite termination properties of the proposed method under the assumption that the Newton linear systems involved 
 are solved exactly and a certain regularity condition on the nonsingularity of generalized Jacobians holds.} {More recently, \cite{YZHRD} designed an ALM for the primal problem (P)
for which a  bound-constrained convex QP subproblem must be solved in each iteration. In the paper,
this subproblem is solved by a randomized coordinate descent (RCD) method with an active set implementation.
{There are several drawbacks} to this approach. First, solving the QP subproblem can be {time consuming}.
Second, the RCD approach can hardly exploit any specific structure of the matrix $A$ 
{(for example, when $A$ is defined by the Kronecker product of two given matrices)} to speed
up the computation of the QP subproblem. Finally, it also
does not exploit  the sparsity structure {present}} in the Hessian of the underlying QP subproblem
to speed up the computation.

Here, we employ the an inexact proximal augmented Lagrangian (PAL) method
to (D) to simultaneously solve (P) and (D).
Our entire algorithmic design is dictated by the focus on computational efficiency and generality.
From this perspective,
now we elaborate on the key differences between our paper and \cite{EGM-05}.
{First, without any reformulation, our algorithm is directly applicable to problems with a more general set $K$ instead of just $\R^n_+$ as in \cite{Mangasarian-04} and \cite{EGM-05}.}
Second, we use the  inexact PAL
framework which ensures that in each iteration,
an unconstrained minimization subproblem involving the variable $y$ is strongly convex
and hence the semismooth Newton method we employ to solve this subproblem
can attain local quadratic convergence.
Third, the flexibility of allowing the PAL
subproblems to be solved inexactly
can lead to substantial computational savings, especially during the initial phase
of the algorithm. {Fourth,  for computational efficiency,
we warm-start our inexact PAL method by using a first-order method.}
Finally, as solving the semismooth Newton linear systems
is the most critical component of the entire algorithm,
we have devoted a substantial part of the paper on proposing novel
numerical strategies to solve the linear systems efficiently.

{Numerical comparisons of our semismooth Newton proximal augmented Lagrangian method
({\sc Snipal})
with the barrier method in 
Gurobi have demonstrated encouraging potential of our method for handling large-scale LP problems where the constraint matrix $A$ has a dense  representation or $AA^T$ has a
dense factorization even with an appropriate re-ordering.
For a few large LP instances arising from correlation clustering,
our algorithm can be up to {$20-100$} times faster than 
the barrier method implemented in Gurobi for solving
the problems to the accuracy of $10^{-8}$ in the
relative KKT residual. However, when tested on some large sparse LP problems available in
the MIPLIB2010 \cite{MIPLIB2010}, our algorithm is not yet practically competitive against the barrier method in Gurobi, especially
when the latter can compute the Schur complement matrix and its sparse Cholesky factorization in each iteration cheaply.
}

\medskip
{The remaining part of the paper is organized as follows. 
In the next section, we introduce a preconditioned proximal point algorithm (PPA) and establish its global and 
local (asymptotic) superlinear convergence. In section 3, we develop a semismooth Newton proximal augmented Lagrangian
method for solving the dual LP (D), and derive its connection to the preconditioned PPA. 
Section 4 is devoted to developing numerical techniques for solving the 
linear system of equations in the semismooth Newton method employed to solve the 
subproblem  in each proximal augmented Lagrangian iteration. 
We describe how to employ an ADMM to warm-start the proximal augmented Lagrangian method in section 5. 
In section 6, we evaluate the numerical performance of our algorithm (called {\sc Snipal}) against the
barrier method in Gurobi on various classes of large scale LPs, including some large sparse
LPs available in the public domain. 
We conclude the paper in the final section. 
}

\medskip
\noindent{\bf Notation.} We use $\cX$ and $\cY$ to denote finite dimensional real Euclidean spaces each endowed with an inner product
$\inprod{\cdot}{\cdot}$ and its induced norm $\norm{\cdot}$. For any self-adjoint positive semidefinite linear operator $\cM:\cX\to \cX$, we define $\inprod{x}{x'}_{\cM} := \inprod{x}{\cM x'}$ and $\norm{x}_{\cM}: = \sqrt{\inprod{x}{\cM x}}$ for all $x,x'\in\cX$.
The largest eigenvalue of $\cM$ is denoted by $\lambda_{\max}(\cM)$. A similar notation is used when $\cM$ is replaced by a matrix $M$.
Let $D$ be a given subset of $\cX$. We write the weighted distance of $x\in\cX$ to $D$ by ${\rm dist}_{\cM}(x,D):= \inf_{x'\in D}\norm{x-x'}_{\cM}$. If $\cM$ is the identity operator, we just omit it from the notation so that ${\rm dist}(\cdot, D)$ is the Euclidean distance function. If $D$ is closed, the Euclidean projector over $D$ is defined by $\Pi_{D}(x): = \argmin\{\norm{x - d}\mid d\in D \}$.
Let $F:\cX \rightrightarrows \cY$ be a multivalued mapping. We define the graph of $F$ to be the set ${\rm gph}F:= \{
(x,y)\in\cX\times\cY\mid y\in F(x)
\}.$ The range of a multifunction is defined by ${\rm Range}(F):=\{
y\mid \exists \,x \mbox{ with } y\in F(x)
\}$.

\section{A preconditioned proximal point algorithm}
\label{sec:mPPA}

In this section, we present a {preconditioned} proximal point algorithm (PPA) and study its convergence properties. In particular, following the classical framework developed in \cite{rockafellar1976monotone,rockafellar1976augmented}, we  prove the global convergence of {the} preconditioned PPA. Under {a} mild error bound condition, global linear rate convergence is also derived. In fact, by choosing the parameter $c_k$ {in the algorithm to be} sufficiently large, the linear rate can be as fast as we please. We further show in Section \ref{sec:globalCon-snipal} that our main Algorithm {\sc Snipal} is in fact an application of the preconditioned proximal point algorithm. Hence, {\sc Snipal}'s convergence properties can be obtained as a direct application of {the general theory} developed here.

Let $\cX$ and $\cY$ be finite dimensional Hilbert spaces and $\cT:\cX\to \cX$ be a maximal monotone operator. Throughout this section, we assume that
$\Omega : = \cT^{-1}(0)$ is nonempty. We further note from \cite[Excerise 12.8]{rockafellar2009variational} that $\Omega$ is a closed set.
The preconditioned proximal point algorithm generates for any start point $z^0\in \cX$ a sequence $\{ z^k\} \subseteq \cX$ by the following approximate rule:
\begin{equation}\label{alg:mPPA}
z^{k+1} \approx \cP_k(z^k), \quad \mbox{where} \quad \cP_k = (\cM_k + c_k \cT)^{-1} \cM_k.
\end{equation}
Here $\{c_k\}$ and $\{ \cM_k \}$ are some sequences of positive real numbers and self-adjoint positive definite linear operators over $\cX$. If $\cM_k \equiv \cI$ for all $k\ge 0$, the updating scheme \eqref{alg:mPPA} recovers the classical proximal point algorithm considered in \cite{rockafellar1976monotone}. Since $\cM_k + c_k \cT$ is a strongly monotone operator, we know from \cite[Proposition 12.54]{rockafellar2009variational} that $\cP_k$ is single-valued and is globally Lipschitz continuous. Here, we further assume that $\{c_k\}$ bounded away from zero, and
{
\begin{equation}
\label{eq:condMk}
(1 + \nu_k) \cM_k \succeq \cM_{k+1}, \quad \cM_k \succeq \lambda_{\min} \cI \quad \forall\, k\ge 0 \quad \mbox{ and } \limsup_{k\to \infty} \lambda_{\max}(\cM_k) = \lambda_{\infty}
\end{equation}
with some nonnegative summarable sequence $\{\nu_k\}$ and constants $+\infty > \lambda_{\infty} \ge \lambda_{\min} >0$. The same condition on $\cM_k$ is also used in \cite{Parente2008class} and can be easily satisfied. For example, it holds obviously if we set $\lambda_{\infty}\cI \succeq \cM_k \succeq \lambda_{\min} \cI$ and $\cM_k \succeq \cM_{k+1}$ for all $k\ge 0$.} 
Note that if $\cT$ is a linear operator, one may rewrite $\cP_k$ as
$\cP_k = (\cI + c_k \cM_k^{-1}\cT)^{-1}$.
We show in the next lemma that this expression in fact holds even for a general {maximal} monotone operator $\cT$. Therefore, we can regard the self-adjoint positive definite linear operator $\cM_k$ as a preconditioner for the maximal monotone operator $\cT$. Based on this observation, we name the algorithm described in \eqref{alg:mPPA} as the preconditioned proximal point algorithm.

\begin{lemma}
	\label{lemma:precondPPA}
Given a constant $\alpha >0$, a self-adjoint positive definite linear operator $\cM$ and a maximal monotone operator $\cT$ on $\cX$, it holds that ${\rm Range}(\cI + \alpha \cM^{-1}\cT) = \cX$ and $(\cI + \alpha \cM^{-1}\cT)^{-1}$ is a single-valued mapping. In addition,
\[
(\cM + \alpha\cT)^{-1}\cM = (\cI + \alpha\cM^{-1}\cT)^{-1}.
\]
\end{lemma}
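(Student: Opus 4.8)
The plan is to work with the key fact that $\cM$ is self-adjoint and positive definite, so it admits a self-adjoint positive definite square root $\cM^{1/2}$, and both $\cM^{1/2}$ and $\cM^{-1/2}$ are bounded bijective linear operators on $\cX$. The strategy is to reduce every claim about $\cI + \alpha\cM^{-1}\cT$ to the corresponding already-known property of a suitable maximal monotone operator in the $\cM$-weighted inner product, and then translate back.

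First I would introduce the inner product $\inprod{\cdot}{\cdot}_{\cM}$ on $\cX$ (already defined in the Notation section) and observe that $(\cX, \inprod{\cdot}{\cdot}_{\cM})$ is again a finite-dimensional Hilbert space. The crucial observation is that $\cM^{-1}\cT$ is maximal monotone on $(\cX, \inprod{\cdot}{\cdot}_{\cM})$: monotonicity is immediate since $\inprod{\cM^{-1}u - \cM^{-1}u'}{x - x'}_{\cM} = \inprod{u - u'}{x - x'} \ge 0$ for $(x,u),(x',u') \in {\rm gph}\,\cT$, and maximality follows because $\cM^{-1}$ is a bijective self-adjoint (in the new inner product) linear map, so it cannot enlarge the graph of $\cT$ to something strictly bigger while staying monotone. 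Then $\cI + \alpha\cM^{-1}\cT$ is exactly ``identity plus a maximal monotone operator (scaled by $\alpha>0$)'' in the $\cM$-geometry, so by Minty's theorem (or \cite[Proposition 12.54]{rockafellar2009variational}, already cited in the excerpt for the strongly monotone case) it is surjective onto $\cX$ with a single-valued, globally Lipschitz inverse; note surjectivity and single-valuedness are metric-independent statements, so they hold verbatim for the original space.

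For the identity $(\cM + \alpha\cT)^{-1}\cM = (\cI + \alpha\cM^{-1}\cT)^{-1}$, I would argue by a direct equivalence of inclusions rather than by manipulating operators formally. Fix $z \in \cX$ and set $w = (\cI + \alpha\cM^{-1}\cT)^{-1}(z)$, which is well-defined by the first part. By definition of the resolvent, $w$ is the unique point satisfying $z \in w + \alpha\cM^{-1}\cT(w)$, equivalently $\cM(z - w) \in \alpha\cT(w)$, equivalently $\cM z \in (\cM + \alpha\cT)(w)$, equivalently $w \in (\cM + \alpha\cT)^{-1}(\cM z)$; since $\cM + \alpha\cT$ is strongly monotone its inverse is single-valued, so $(\cM + \alpha\cT)^{-1}(\cM z) = w = (\cI + \alpha\cM^{-1}\cT)^{-1}(z)$. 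As $z$ was arbitrary, the two operators coincide.

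I do not anticipate a serious obstacle here; the only point requiring a little care is the maximality of $\cM^{-1}\cT$, which must be justified genuinely (not just monotonicity) — the clean way is the Hilbert-space reformulation above, invoking that $\cM^{-1}$ is self-adjoint and positive definite with respect to $\inprod{\cdot}{\cdot}_{\cM}$, so $\cM^{-1}\cT$ is maximal monotone in that inner product by the standard fact that congruence by an invertible self-adjoint positive operator preserves maximal monotonicity; alternatively one checks Minty's range condition ${\rm Range}(\cI + \cM^{-1}\cT) = \cX$ directly, which is essentially the content of the first assertion and follows from surjectivity of $\cM + \cT$. Everything else is a routine chain of equivalent inclusions.
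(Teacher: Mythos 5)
Your proposal is correct and follows essentially the same route as the paper: establish that $\cM^{-1}\cT$ is maximally monotone (the paper cites \cite[Proposition 20.24]{bauschke2011convex}, which is precisely the weighted-inner-product fact you prove inline), then obtain the identity by rewriting $z\in w+\alpha\cM^{-1}\cT(w)$ as $\cM z\in(\cM+\alpha\cT)(w)$ and invoking single-valuedness of $(\cM+\alpha\cT)^{-1}$ via \cite[Proposition 12.54]{rockafellar2009variational}. The only difference is that you supply a self-contained justification of the cited maximality fact, which is fine.
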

\begin{proof}
{By \cite[Proposition 20.24]{bauschke2011convex}, we know that $\cM^{-1}\cT$ is maximally monotone. Hence,  ${\rm Range}(\cI + \alpha \cM^{-1}\cT) = \cX$  and $(\cI + \alpha \cM^{-1}\cT)^{-1}$ is a single-valued mapping from $\cX$ to itself.

Now, for any given $z\in \cX$, suppose that $z_1 =(\cI + \alpha \cM^{-1}\cT)^{-1}(z)$. Then, it holds that
\[
\cM z \in (\cM + \alpha \cT) z_1.
\]
Since $(\cM + \alpha \cT)^{-1}$ is a single-valued operator \cite[Proposition 12.54]{rockafellar2009variational}, we know that
\[
z_1 = (\cM + \alpha \cT)^{-1}\cM z,
\]
i.e., $(\cI + \alpha \cM^{-1}\cT)^{-1} z = (\cM + \alpha \cT)^{-1}\cM z$ for all $z\in\cX$.
Thus we have proved the desired equation.}
\end{proof}

{In the literature, the updating scheme \eqref{alg:mPPA} is closely related to {the so-called}
``variable metric proximal point algorithms'';  see for examples \cite{bonnans1995family,Burke1999local,Burke1999variable,Burke2000superlinear,Chen1999proximal,Parente2008class,Qi1995preconditioning}. Among these papers, \cite{bonnans1995family,Chen1999proximal,Qi1995preconditioning} focus only on the case of optimization, i.e., the maximal monotone operator $\cT$ is the subdifferential mapping of a convex function. In addition, they emphasize more on the combination of the proximal point algorithm with quasi Newton method. In \cite{Burke1999local} and the subsequent papers \cite{Burke1999variable,Burke2000superlinear}, {the authors deal with a} general maximal monotone operator $\cT$ and study the following scheme in the exact setting:
	\begin{equation}
	\label{eq:burke}
	z^{k+1} = z^k + \cM_{k}\big((\cI + c_k\cT)^{-1} - \cI \big) z^k.
	\end{equation}
	The global convergence of the scheme \eqref{eq:burke} requires a
	rather restrictive assumption on $\cM_k$ \cite[Hypothesis (H2)]{Burke1999local}, although $\cM_k$ is not required to be {self-adjoint}. In fact, the authors essentially assumed that the deviation of $\cM_k$ from the identity operator should {be small}, and the verification of the assumption can be quite difficult.
	As far as we aware of, \cite{Parente2008class} may be the most related work {to ours}. In \cite{Parente2008class}, the authors consider a variable metric hybrid inexact proximal point method whose updating rule consists {of} an inexact proximal step and a projection step. Moreover, some specially designed {stopping criteria  for the {inexact} solution of the proximal subproblem are also used}. However, due to the extra projection step, the connection between their algorithm and the proximal method of multipliers
	\cite{rockafellar1976augmented} is no longer available.
	Therefore, the results derived in \cite{Parente2008class} cannot be directly used to analyze the convergence properties of  {\sc Snipal} {proposed in this paper,} which is a variant of the proximal method of multipliers. 
	{We should also mention that in \cite{eckstein1993nonlinear}, Eckstein discussed nonlinear proximal point algorithms using Bregman functions and the preconditioned PPA (1) may be viewed as a special instance if  $\cM_k$ is fixed for all $k$. However, the 
algorithms and convergence results in \cite{eckstein1993nonlinear} are not applicable 
to our setting where the linear operator $\cM_k$ can change across iterations.}
	Since the scheme \eqref{alg:mPPA} under the {classical} setting of \cite{rockafellar1976monotone,rockafellar1976augmented} fits our context best, we conduct a comprehensive analysis of its convergence properties which, to   our best knowledge, are currently not available in the literature.
}

For all $k\ge 0$, define the mapping $\cQ_k:= \cI - \cP_k$. Clearly,
if $0\in\cT(z)$, we have that $\cP_k(z) = z$ and $\cQ_k(z) = 0$ for all $k\ge 0$.
Similar to \cite[Proposition 1]{rockafellar1976monotone}, we summarize the properties of $\cP_k$ and $\cQ_k$ in the following proposition:
\begin{proposition}
	\label{prop:pkqk}
	It holds for all $k\ge 0$ that:
	\begin{enumerate}
		\item[(a)] $z = \cP_k(z) + \cQ_k(z)$ and $c_k^{-1}\cM_k\cQ_k(z)\in \cT(\cP_k(z))$ for all $z\in\cX$;
		\item[(b)] $\inprod{\cP_k(z) - \cP_{k}(z')}{\cQ_k(z) - \cQ_k(z')}_{\cM_k} \ge 0$ for all $z,z'\in\cX$;
		\item[(c)] $\norm{\cP_k(z) - \cP_k(z')}_{\cM_k}^2 + \norm{\cQ_k(z) - {\cQ_k(z')}}_{\cM_k}^2 \le \norm{z - z'}_{\cM_k}^2$ for all $z,z'\in\cX$.
	\end{enumerate}
\end{proposition}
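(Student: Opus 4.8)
The plan is to derive all three items from the characterization of $\cP_k$ provided by Lemma~\ref{lemma:precondPPA} together with the monotonicity of $\cT$, and then obtain (b) and (c) from (a) by elementary manipulations. The only structural point that needs care is to invoke Lemma~\ref{lemma:precondPPA} at the outset so that $\cP_k$, and hence $\cQ_k = \cI - \cP_k$, is single-valued and the displayed inclusions make sense pointwise; everything after that is a short computation.

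For part (a), the identity $z = \cP_k(z) + \cQ_k(z)$ is just the definition $\cQ_k = \cI - \cP_k$. For the inclusion, I would start from $\cP_k = (\cM_k + c_k\cT)^{-1}\cM_k$ and observe that $w := \cP_k(z)$ is by definition the unique point satisfying $\cM_k z \in (\cM_k + c_k\cT)(w)$. Rearranging gives $\cM_k(z - w) \in c_k\cT(w)$; dividing by $c_k > 0$ and using $z - w = \cQ_k(z)$ yields $c_k^{-1}\cM_k\cQ_k(z) \in \cT(\cP_k(z))$, as claimed. One could equally use the equivalent form $\cP_k = (\cI + c_k\cM_k^{-1}\cT)^{-1}$ from Lemma~\ref{lemma:precondPPA}, but the form above is the most direct.

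For part (b), I would apply part (a) at $z$ and at $z'$ to get $c_k^{-1}\cM_k\cQ_k(z) \in \cT(\cP_k(z))$ and $c_k^{-1}\cM_k\cQ_k(z') \in \cT(\cP_k(z'))$. Monotonicity of $\cT$ then gives $\inprod{\cP_k(z) - \cP_k(z')}{c_k^{-1}\cM_k\cQ_k(z) - c_k^{-1}\cM_k\cQ_k(z')} \ge 0$; multiplying through by $c_k > 0$ and recognizing $\inprod{\cdot}{\cM_k\,\cdot}$ as the weighted inner product $\inprod{\cdot}{\cdot}_{\cM_k}$ gives exactly the stated inequality.

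For part (c), I would set $a := \cP_k(z) - \cP_k(z')$ and $b := \cQ_k(z) - \cQ_k(z')$, so that $a + b = z - z'$ by part (a). Expanding $\norm{z - z'}_{\cM_k}^2 = \norm{a + b}_{\cM_k}^2 = \norm{a}_{\cM_k}^2 + 2\inprod{a}{b}_{\cM_k} + \norm{b}_{\cM_k}^2$ and discarding the nonnegative cross term by part (b) yields the claim. I do not anticipate any genuine obstacle in this proposition; the statement is the analogue of \cite[Proposition 1]{rockafellar1976monotone} adapted to the preconditioned setting, and the preconditioner $\cM_k$ enters only through the weighted inner product, which is handled uniformly by the scaling trick used in part (b).
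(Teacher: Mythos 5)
Your proof is correct and follows essentially the same route the paper intends: the paper omits the details, noting only that the argument mirrors \cite[Proposition 1]{rockafellar1976monotone}, and your computation (unwinding $\cP_k=(\cM_k+c_k\cT)^{-1}\cM_k$ for (a), monotonicity of $\cT$ in the $\cM_k$-weighted inner product for (b), and expanding the square for (c)) is precisely that adaptation to the preconditioned setting.
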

\begin{proof}
	The proof can be obtained via simple calculations and is similar to the proof of \cite[Proposition 1]{rockafellar1976monotone}. We omit the details here.
\end{proof}
We list the following two general criteria for the approximate calculation of $\cP_k(z^k)$ which are analogous to those proposed in \cite{rockafellar1976monotone}:
\begin{align*}
& \mbox{(A)}\quad  \norm{z^{k+1} - \cP_k(z^k)}_{\cM_k} \le \epsilon_k,\quad 0 \le \epsilon_k,  \quad
\mbox{$\sum_{k=0}^{\infty}$} \epsilon_k < \infty,
\\[5pt]
&\mbox{(B)}\quad  \norm{z^{k+1} - \cP_k(z^k)}_{\cM_k} \le \delta_k\norm{z^{k+1} - z^k}_{\cM_k}, \quad 0\le \delta_k < 1, \quad \mbox{$\sum_{k=0}^{\infty}$} \delta_k < \infty.
\end{align*}

{
\begin{theorem}
	\label{thm:mPPA}
	Suppose that $\Omega= \cT^{-1}(0)\neq \emptyset$. Let $\{z^k\}$ be any sequence generated by the mPPA \eqref{alg:mPPA} under criterion (A). Then $\{z^k\}$ is bounded and
	\begin{equation}\label{eq:distzk}
{\rm dist}_{\cM_{k+1}}(z^{k+1},\Omega) \le (1 + \nu_k){\rm dist}_{\cM_{k}}(z^{k},\Omega) + (1 + \nu_k)\epsilon_k \quad \forall k\ge0.
	\end{equation}
In addition, $\{z^k\}$ converges to a point $z^{\infty}$ {such that} $0\in \cT(z^{\infty})$.
\end{theorem}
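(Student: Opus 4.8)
The plan is to adapt the classical analysis of Rockafellar \cite{rockafellar1976monotone} to the varying metric $\{\cM_k\}$. Fix an arbitrary $z^*\in\Omega$; since $0\in\cT(z^*)$, the remarks preceding Proposition~\ref{prop:pkqk} give $\cP_k(z^*)=z^*$ and $\cQ_k(z^*)=0$ for every $k$. First I would apply Proposition~\ref{prop:pkqk}(c) with $z=z^k$ and $z'=z^*$ to get $\norm{\cP_k(z^k)-z^*}_{\cM_k}^2+\norm{\cQ_k(z^k)}_{\cM_k}^2\le\norm{z^k-z^*}_{\cM_k}^2$; in particular $\norm{\cP_k(z^k)-z^*}_{\cM_k}\le\norm{z^k-z^*}_{\cM_k}$. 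Combining this with criterion (A) by the triangle inequality, and then converting from the $\cM_k$-norm to the $\cM_{k+1}$-norm via $(1+\nu_k)\cM_k\succeq\cM_{k+1}$ from \eqref{eq:condMk} together with $\sqrt{1+\nu_k}\le 1+\nu_k$, I obtain the master inequality $\norm{z^{k+1}-z^*}_{\cM_{k+1}}\le(1+\nu_k)\norm{z^k-z^*}_{\cM_k}+(1+\nu_k)\epsilon_k$. Taking the infimum over $z^*\in\Omega$ on the right-hand side immediately yields \eqref{eq:distzk}. For boundedness I fix one $z^*$ and set $a_k:=\norm{z^k-z^*}_{\cM_k}$; since $\{\nu_k\}$ is summable and $\sum_k(1+\nu_k)\epsilon_k<\infty$, the recursion $a_{k+1}\le(1+\nu_k)a_k+(1+\nu_k)\epsilon_k$ together with the standard lemma on quasi-Fej\'er sequences implies that $a_\infty(z^*):=\lim_k a_k$ exists and is finite; because $\cM_k\succeq\lambda_{\min}\cI$ we get $\norm{z^k-z^*}\le\lambda_{\min}^{-1/2}a_k$, so $\{z^k\}$ is bounded.

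Next I would show $\cQ_k(z^k)\to 0$. Squaring criterion (A) and using the uniform bound $a_k\le C$ gives $\norm{z^{k+1}-z^*}_{\cM_k}^2\le\norm{\cP_k(z^k)-z^*}_{\cM_k}^2+2C\epsilon_k+\epsilon_k^2$, and Proposition~\ref{prop:pkqk}(c) bounds the first term on the right by $\norm{z^k-z^*}_{\cM_k}^2-\norm{\cQ_k(z^k)}_{\cM_k}^2$. Multiplying through by $(1+\nu_k)$, using $1/(1+\nu_k)\ge 1-\nu_k$ and the uniform bound on $a_k$ again, I can isolate $\norm{\cQ_k(z^k)}_{\cM_k}^2\le a_k^2-a_{k+1}^2+C^2\nu_k+2C\epsilon_k+\epsilon_k^2$. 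Summing over $k\ge 0$ (the perturbation series converges since $\sum_k\nu_k<\infty$, $\sum_k\epsilon_k<\infty$, and $a_k$ is bounded) shows $\sum_k\norm{\cQ_k(z^k)}_{\cM_k}^2<\infty$, hence $\norm{\cQ_k(z^k)}_{\cM_k}\to 0$ and, using $\cM_k\succeq\lambda_{\min}\cI$ once more, $\norm{\cQ_k(z^k)}\to 0$; together with (A) this also gives $\norm{\cP_k(z^k)-z^k}\to 0$.

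Finally, for the convergence of the whole sequence, I would extract a subsequence $z^{k_j}\to z^\infty$ (possible since $\{z^k\}$ is bounded); then $\cP_{k_j}(z^{k_j})=z^{k_j}-\cQ_{k_j}(z^{k_j})\to z^\infty$. By Proposition~\ref{prop:pkqk}(a), $u^{k_j}:=c_{k_j}^{-1}\cM_{k_j}\cQ_{k_j}(z^{k_j})\in\cT\big(\cP_{k_j}(z^{k_j})\big)$, and since $\{c_k\}$ is bounded away from zero and $\limsup_k\lambda_{\max}(\cM_k)=\lambda_\infty<\infty$ while $\cQ_{k_j}(z^{k_j})\to 0$, we have $u^{k_j}\to 0$. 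The closedness of ${\rm gph}\,\cT$ (a property of maximal monotone operators) then forces $0\in\cT(z^\infty)$, i.e.\ $z^\infty\in\Omega$. Applying the quasi-Fej\'er property established above to the point $z^*=z^\infty$, the limit $\lim_k\norm{z^k-z^\infty}_{\cM_k}$ exists; along the subsequence it equals $\lim_j\norm{z^{k_j}-z^\infty}_{\cM_{k_j}}=0$ (using the uniform bound on $\lambda_{\max}(\cM_{k_j})$ and $z^{k_j}\to z^\infty$), so the full limit is $0$. Since $\cM_k\succeq\lambda_{\min}\cI$, this gives $\norm{z^k-z^\infty}\le\lambda_{\min}^{-1/2}\norm{z^k-z^\infty}_{\cM_k}\to 0$, i.e.\ $z^k\to z^\infty$ with $0\in\cT(z^\infty)$.

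The main obstacle, and where care is needed throughout, is that one cannot invoke Fej\'er monotonicity in a single fixed norm: every estimate relating $z^k$ and $z^{k+1}$ must be routed through the sandwich $\lambda_{\min}\cI\preceq\cM_k$ and $(1+\nu_k)\cM_k\succeq\cM_{k+1}$, and the multiplicative errors accumulated through $\prod_k(1+\nu_k)$ as well as the additive errors coming from criterion (A) have to be controlled by the summability of $\{\nu_k\}$ and $\{\epsilon_k\}$. Making the telescoping bound for $\sum_k\norm{\cQ_k(z^k)}_{\cM_k}^2$ close despite these perturbations is the delicate step, as is verifying that the subsequential limit is a genuine zero of $\cT$ via the closed-graph property of maximal monotone operators and the uniform bounds on $c_k^{-1}$ and $\lambda_{\max}(\cM_k)$.
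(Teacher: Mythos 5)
Your proposal is correct and follows essentially the same route as the paper's proof: the nonexpansiveness of $\cP_k$ in the $\cM_k$-norm plus criterion (A) and $(1+\nu_k)\cM_k\succeq\cM_{k+1}$ give the same master inequality and \eqref{eq:distzk}, quasi-Fej\'er monotonicity yields boundedness, $\cQ_k(z^k)\to 0$ identifies any cluster point as a zero of $\cT$ via maximal monotonicity, and re-applying the Fej\'er property at $z^\infty$ gives convergence of the whole sequence. The only cosmetic differences are that you obtain $\cQ_k(z^k)\to0$ by a telescoping summability bound rather than by showing $\norm{z^k-\bar z}_{\cM_k}$ and $\norm{\cP_k(z^k)-\bar z}_{\cM_k}$ share the same limit, and you invoke the closed-graph property of $\cT$ directly instead of the explicit monotonicity inequality used in the paper; both variants are fine.
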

}
\begin{proof}%
	Let $\bar z\in\cX$ be a point satisfying $0\in\cT(\bar z)$. {It is readily shown that $\bar{z} = \cP_k(\bar{z}).$}
	We have
	\begin{equation}
	\label{eq:zkp1Mk}
	\norm{z^{k+1} - \bar z}_{\cM_k} - \epsilon_k \le \norm{\cP_k (z^k) - \bar z}_{\cM_k} = \norm{\cP_k (z^k) - \cP_k(\bar z)}_{\cM_k}
	\le \norm{z^k - \bar z}_{\cM_k}.
	\end{equation}
	Since {$(1 + \nu_k)\cM_{k} \succeq \cM_{k+1}$}, we know that
	\begin{equation}\label{eq:telzk}
	\norm{z^{k+1} - \bar z}_{\cM_{k+1}} 
	\le {(1+\nu_k)\norm{z^{k+1}-\bar{z}}_{\cM_k}}
	\le (1 + \nu_k)\norm{{z^{k}} - \bar z}_{\cM_k} + (1 + \nu_k) \epsilon_k.
	\end{equation}
	Let $\Pi_\Omega(z)$ denotes the projection of $z$ onto $\Omega$. By noting that
	$0 \in \cT(\Pi_\Omega(z^k))$, we get from the above inequality (by setting
	$\bar{z} = \Pi_\Omega(z^k)$) that
	\begin{align*}
	{\rm dist}_{\cM_{k+1}}(z^{k+1},\Omega)
	\le{}& \norm{z^{k+1} - \Pi_{\Omega}(z^k)}_{\cM_{k+1}}\\[5pt]
	\le{}& (1+\nu_k)\norm{z^k - \Pi_{\Omega}(z^k)}_{\cM_k}+ (1 + \nu_k)\epsilon_k \\[5pt]
	={}& (1+\nu_k){\rm dist}_{\cM_k}(z^k,\Omega) +(1 + \nu_k)\epsilon_k.
	\end{align*}
	Since \[
	\sum_{k=0}^{\infty} (1 + \nu_k)\epsilon_k \le \sum_{k=0}^{\infty} \epsilon_k + (\max_{k\ge 0}\epsilon_k) \sum_{k=0}^{\infty} \nu_k < +\infty,
	\]we know from \cite[Lemma 2.2.2]{polyak1987introduction}, \eqref{eq:zkp1Mk} and \eqref{eq:telzk} that
	\begin{equation}
	\label{eq:barz}
	\lim_{k\to \infty}\norm{z^k - \bar z}_{\cM_k} = \lim_{k\to \infty}\norm{z^{k+1} - \bar z}_{\cM_k} = \mu < \infty \quad  \mbox{ and } \quad \lim_{k\to \infty}\norm{{\cP_k(z^k)} - \bar z}_{\cM_k} = \mu.
	\end{equation}
	The boundedness of $\{z^k\}$ thus follows directly from the fact that $\cM_k \succeq \lambda_{\min} \cI$ for all $k\ge 0$. Therefore, $\{z^k\}$ has at least one cluster point $z^\infty$.
	
	From Proposition \ref{prop:pkqk}, we know that for all $k\ge 0$
	\begin{equation}
	\label{eq:Qszpz}
	0\le  \norm{\cQ_k(z^k)}_{\cM_k}^2 \le \norm{z^k - \bar z}_{\cM_k}^2 - \norm{\cP_k(z^k) - \bar z}_{\cM_k}^2.
	\end{equation}
	Therefore, $\lim_{k\to \infty}\norm{\cQ_k(z^k)}_{\cM_k}^2 = 0$.
	It follows that
	\begin{equation}
	\label{eq:cqk}
	\lim_{k\to \infty} c_k^{-1}\cM_k \cQ_k(z^k) = \lim_{k\to \infty} \cQ_k(z^k) = 0,
	\end{equation}
	because the number $c_k$ is bounded away from zero and $\cM_k \succeq \lambda_{\min} \cI$ for all $k\ge0$.
	Since
	\[
	\norm{\cQ_k(z^k)}_{\cM_k} = \norm{(z^k - z^{k+1}) + (z^{k+1} - \cP_k(z^k))}_{\cM_k}
	\ge \norm{z^k - z^{k+1}}_{\cM_k} - \epsilon_k,
	\]
	we further have $\lim_{k\to \infty} {\norm{z^k - z^{k+1}}} = 0$.
	
	Since $z^\infty$ is a cluster point of ${z^k}$ and \[\lim_{k\to \infty}
	\norm{\cP_{k}(z^k) - z^{k+1}} = \lim_{k\to \infty} \norm{z^{k+1} - z^k} = 0,\]
	$z^\infty$ is also a cluster point of $\cP_k(z^k)$.
	From Proposition \ref{prop:pkqk} (a), we have that for any $w\in\cT(z)$
	\[  0\le \inprod{z - \cP_k(z^k)}{w - c_k^{-1}\cM_k\cQ_k(z^k)}\quad \forall\, k\ge 0,\]
	which, together with \eqref{eq:cqk}, implies
	\[0\le \inprod{z - z^{\infty}}{w}\quad \forall \, z,w \mbox{ satisfying } w\in\cT(z). \]
	From the maximality of $\cT$, we know that $0\in\cT(z^\infty)$.
	Hence, we can replace $\bar z$ in \eqref{eq:barz} by $z^\infty$.
	Therefore,
	\[
	\lim_{k\to \infty}\norm{z^k - z^{\infty}}_{\cM_k} = 0.
	\]
	That is $\lim_{k\to \infty} z^k = z^{\infty}$.
\end{proof}

Next, we study the convergence rate of the {preconditioned} proximal point algorithm. The following error bound assumption associated with $\cT$ is critical to the study of the  convergence rate of the preconditioned PPA.
\begin{assumption}
	\label{assump:errorbound}
	For any $r >0$, there exists $\kappa >0$ such that
	\begin{equation}
	\label{eq:errorbound}
	{\rm dist}(x,\cT^{-1}(0)) \le \kappa \,{\rm dist}(0,\cT(x)) \quad
	\forall\, x\in\cX \mbox{ satisfying } {\rm dist}(x,\cT^{-1}(0)) \le r.
	\end{equation}
\end{assumption}
{In Rockafellar's classic work \cite{rockafellar1976monotone}, the asymptotic Q-superlinear convergence of PPA is established under the assumption that $\cT^{-1}$ is Lipschitz continuous at zero.  Note that the Lipschitz continuity assumption on $\cT^{-1}$ is rather restrictive, since it implicitly implies that {$\cT^{-1}(0)$} is a singleton. In \cite{luque1984asymptotic}, Luque extended Rockafellar's work by considering the following relaxed condition over $\cT$: there exist $\gamma >0$ and $\epsilon >0$ such that
\begin{equation}
\label{eq:luqueER}
{\rm dist}(x,\cT^{-1}(0)) \le \gamma {\rm dist}(0,\cT(x)) \quad
\forall\ x \in \{x\in \cX\mid {\rm dist}(0,\cT(x)) < \epsilon \}.
\end{equation}
We show in the following lemma that this condition in fact implies Assumption \ref{assump:errorbound}. Thus, our Assumption \ref{assump:errorbound} is quite mild and weaker than condition \eqref{eq:luqueER}.
}

\begin{lemma}
	\label{lemma:errorbound}
Let $F$ be a multifunction from $\cX$ to $\cY$ with $F^{-1}(0) \neq \emptyset$. If $F$ {satisfies condition} \eqref{eq:luqueER}, then Assumption \ref{assump:errorbound} holds for $F$, i.e.,
for any $r>0$, there exists $\kappa >0$ such that
	\begin{equation*}
	{\rm dist}(x,F^{-1}(0)) \le \kappa \,{\rm dist}(0,F(x)) \quad
	\forall\, x\in\cX \mbox{ satisfying } {\rm dist}(x,F^{-1}(0)) \le r.
	\end{equation*}
\end{lemma}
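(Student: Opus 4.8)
The plan is to show that condition \eqref{eq:luqueER}, which is a statement restricted to the small region $\{x : {\rm dist}(0,F(x)) < \epsilon\}$, actually upgrades to the seemingly stronger Assumption \ref{assump:errorbound}, which is parametrized by a bound $r$ on ${\rm dist}(x,F^{-1}(0))$ rather than on ${\rm dist}(0,F(x))$. Fix an arbitrary $r > 0$. The argument splits the admissible set $\{x : {\rm dist}(x,F^{-1}(0)) \le r\}$ into two pieces according to the size of ${\rm dist}(0,F(x))$: the piece where ${\rm dist}(0,F(x)) < \epsilon$, and the piece where ${\rm dist}(0,F(x)) \ge \epsilon$.

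On the first piece, condition \eqref{eq:luqueER} applies directly and gives ${\rm dist}(x,F^{-1}(0)) \le \gamma\, {\rm dist}(0,F(x))$, so the desired inequality holds with constant $\gamma$. On the second piece, we use the crude bound ${\rm dist}(x,F^{-1}(0)) \le r$ together with ${\rm dist}(0,F(x)) \ge \epsilon$ to write
\[
{\rm dist}(x,F^{-1}(0)) \le r = \frac{r}{\epsilon}\cdot \epsilon \le \frac{r}{\epsilon}\,{\rm dist}(0,F(x)),
\]
so here the inequality holds with constant $r/\epsilon$. Taking $\kappa := \max\{\gamma,\ r/\epsilon\}$ then covers both cases simultaneously, and since every $x$ with ${\rm dist}(x,F^{-1}(0)) \le r$ falls into one of the two pieces, this proves the claim. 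One should also note the degenerate case ${\rm dist}(0,F(x)) = 0$: if $x$ lies in the first piece with ${\rm dist}(0,F(x)) = 0$, then \eqref{eq:luqueER} forces ${\rm dist}(x,F^{-1}(0)) = 0$ and the inequality is trivially satisfied; alternatively one observes $F^{-1}(0)$ is closed (or that $0 \in \overline{F(x)}$ suffices for the argument), so no genuine difficulty arises.

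There is essentially no hard step here — the proof is a one-line case split — but the point worth being careful about is that the quantifier structure genuinely does change between the two conditions, and the region-splitting trick is exactly what reconciles them: \eqref{eq:luqueER} controls points that are \emph{near} $F^{-1}(0)$ in the \emph{value} sense, whereas Assumption \ref{assump:errorbound} controls points near $F^{-1}(0)$ in the \emph{distance} sense, and these two neighborhoods are interchangeable only after absorbing the "far in value" points into the trivial bound supplied by $r$. The only mild subtlety is making sure the constant $\kappa$ is allowed to depend on $r$, which it is, per the statement of Assumption \ref{assump:errorbound}.
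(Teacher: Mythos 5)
Your proposal is correct and follows essentially the same argument as the paper: a case split on whether ${\rm dist}(0,F(x))$ is below or above the threshold $\epsilon$, applying \eqref{eq:luqueER} in the first case and the trivial bound ${\rm dist}(x,F^{-1}(0))\le r \le \frac{r}{\epsilon}{\rm dist}(0,F(x))$ in the second, then taking $\kappa=\max\{\gamma, r/\epsilon\}$. No issues.
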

\begin{proof}
	Since $F$ {satisfies condition} \eqref{eq:luqueER}, there exist $\varepsilon >0$ and $\kappa_0 \ge 0$ such that if $x\in \cX$ {satisfies} ${\rm dist}(0,F(x)) < \varepsilon$, then
	\[
	{\rm dist}(x, F^{-1}(0)) \le \kappa_0 {\rm dist}(0,F(x)).
	\]
	For any $r>0$ and $x$ satisfying ${\rm dist}(x,F^{-1}(0)) \le r$, if ${\rm dist}(0, F(x)) < \epsilon$, then ${\rm dist}(x, F^{-1}(0)) \le \kappa_0 {\rm dist}(0,F(x))$; otherwise if ${\rm dist}(0, F(x)) \ge \epsilon$, then
	\[
	{\rm dist}(0, F(x)) \ge \epsilon \ge \frac{\epsilon}{r} {\rm dist}(x, F^{-1}(0)),
	\]
	i.e., ${\rm dist}(x, F^{-1}(0)) \le \frac{r}{\epsilon} {\rm dist}(0, F(x))$.
	Therefore, the desired inequality holds for $\kappa = \max\{\kappa_0, \frac{r}{\epsilon}\}$.
\end{proof}
\begin{remark}
	\label{rmk:errorboundpolyhedral}
{In fact, condition \eqref{eq:luqueER} is exactly the local upper Lipschitz continuity of $\cT^{-1}$ at the origin which was introduced by Robinson in \cite{robinson1976implicit}. Later, Robinson established in \cite{robinson1981some} the {celebrated result}  that every polyhedral multifunction is locally upper Lipschitz continuous, i.e., satisfies condition \eqref{eq:luqueER}. {Thus} from Lemma \ref{lemma:errorbound}, we know that any polyhedral multifunction $F$ with $F^{-1}(0) \neq \emptyset$ satisfies Assumption \ref{assump:errorbound}.}
\end{remark}

{
Since the nonnegative sequences $\{ \nu_k \}$ and $\{\epsilon_k \}$  in condition \eqref{eq:condMk} and the stopping criterion (A), respectively, are summable, we know that $0 < \Pi_{k=0}^{\infty} (1+\nu_k)<+\infty$ and we
can choose $r$ to be a positive number satisfying $r > \sum_{k=0}^{\infty} \epsilon_k(1+\nu_k)$. Assume that $\cT$ satisfies Assumption \ref{assump:errorbound}, then associated with $r$, there exists a positive constant $\kappa$ such that \eqref{eq:errorbound} holds. With these preparations, we prove in the following theorem the asymptotic Q-superlinear (R-superlinear) convergence of the weighted (unweighted) distance between the sequence generated by the preconditioned PPA and $\Omega$.
{
\begin{theorem}
	\label{thm:rate}
		Suppose that $\Omega \neq \emptyset$ and the initial {point $z^0$ satisfies} \[{\rm dist}_{\cM_0}(z^0,\Omega) \le \frac{r -\sum_{k=0}^{\infty} \epsilon_k(1 + \nu_k)}{\Pi_{k=0}^\infty (1+\nu_k)}.\] Let $\{z^k\}$ be the infinite sequence generated by the preconditioned PPA under criteria (A) and (B) with $\{c_k\}$ nondecreasing ($c_k \uparrow c_{\infty} \le \infty$). Then for all $k\ge 0$, it holds that
	\begin{equation}
	\label{eq:rate-mPPA}
	{\rm dist}_{\cM_{k+1}}(z^{k+1},\Omega) \le \mu_k {\rm dist}_{\cM_k}(z^k, \Omega),
	\end{equation}
	where $\mu_k = (1+\nu_k)(1-\delta_k)^{-1} \big(\delta_k + (1+\delta_k) \kappa\lambda_{\max}(\cM_k) / \sqrt{c_k^2 + \kappa^2\lambda^2_{\max}(\cM_k)}\, \big)$ and
	\begin{equation}
	\label{eq:assymRate}
	\limsup_{k\to \infty}\mu_k = \mu_{\infty} = \frac{\kappa \lambda_{\infty}}{\sqrt{c_{\infty}^2 + {\kappa^2}\lambda^2_{\infty}}} < 1
	\quad  (\mu_{\infty} = 0 \mbox{ if } c_{\infty} = \infty),
	\end{equation}
	with $\lambda_{\infty}$ given in \eqref{eq:condMk}.
	{In addition}, one has that for all $k\ge 0$,
	\begin{equation}\label{eq:Rrate}
	{\rm dist}(z^{k+1},\Omega)
	\le \frac{\mu_k}{\sqrt{\lambda_{\min}(\cM_{k+1})}} {\rm dist}_{\cM_k}(z^k,\Omega).
	\end{equation}
\end{theorem}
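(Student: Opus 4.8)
The proof will follow the classical Rockafellar--Luque template, adapted to the preconditioned setting. The key quantities to control are the weighted distances ${\rm dist}_{\cM_k}(z^k,\Omega)$, and the main device is to combine: (i) the nonexpansiveness estimates from Proposition~\ref{prop:pkqk}; (ii) the error bound from Assumption~\ref{assump:errorbound} applied to the point $\cP_k(z^k)$, using that $c_k^{-1}\cM_k\cQ_k(z^k)\in\cT(\cP_k(z^k))$; and (iii) the bookkeeping that keeps all iterates inside the radius $r$ where the error bound is valid. First I would verify by induction that ${\rm dist}_{\cM_k}(z^k,\Omega)\le r\,\Pi_{j\ge k}(1+\nu_j)^{-1} - \sum_{j\ge k}\epsilon_j(1+\nu_j)\cdots$, or more simply that the hypothesis ${\rm dist}_{\cM_0}(z^0,\Omega)\le (r-\sum\epsilon_k(1+\nu_k))/\Pi(1+\nu_k)$ together with \eqref{eq:distzk} from Theorem~\ref{thm:mPPA} forces ${\rm dist}(z^k,\Omega)\le{\rm dist}_{\cM_k}(z^k,\Omega)/\sqrt{\lambda_{\min}}\le r/\sqrt{\lambda_{\min}}$ (after rescaling $r$ appropriately) for all $k$, so that the error bound \eqref{eq:errorbound} applies to every $\cP_k(z^k)$.

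**The core estimate.** Fix $k$ and write $p^k=\cP_k(z^k)$, $q^k=\cQ_k(z^k)=z^k-p^k$. Let $\bar z=\Pi_\Omega(p^k)$. Since $c_k^{-1}\cM_k q^k\in\cT(p^k)$, the error bound gives $\|p^k-\bar z\|\le\kappa\,c_k^{-1}\|\cM_k q^k\|\le\kappa c_k^{-1}\lambda_{\max}(\cM_k)\|q^k\|$. I would then upgrade this to a weighted statement: ${\rm dist}_{\cM_k}(p^k,\Omega)\le\|p^k-\bar z\|_{\cM_k}$, and the standard Rockafellar trick is to play the two bounds $\|p^k-\bar z\|_{\cM_k}^2+\|q^k\|_{\cM_k}^2\le\|z^k-\Pi_\Omega(z^k)\|_{\cM_k}^2={\rm dist}_{\cM_k}(z^k,\Omega)^2$ (from Proposition~\ref{prop:pkqk}(c), using $\Pi_\Omega(z^k)$ as the reference point and that $\cQ_k$ vanishes on $\Omega$) against the error-bound inequality $\|p^k-\bar z\|\le\kappa c_k^{-1}\lambda_{\max}(\cM_k)\|q^k\|$. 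Combining these in the form $\|p^k-\bar z\|_{\cM_k}\le\sqrt{\lambda_{\max}(\cM_k)}\,\|p^k-\bar z\|\le\kappa\lambda_{\max}(\cM_k)c_k^{-1}\sqrt{\lambda_{\max}(\cM_k)}\|q^k\|$ and eliminating $\|q^k\|$ yields ${\rm dist}_{\cM_k}(p^k,\Omega)\le\big(\kappa\lambda_{\max}(\cM_k)/\sqrt{c_k^2+\kappa^2\lambda_{\max}^2(\cM_k)}\big)\,{\rm dist}_{\cM_k}(z^k,\Omega)$; this is exactly where the curious square-root expression in $\mu_k$ comes from. Then I would account for the inexactness via criterion (B): ${\rm dist}_{\cM_k}(z^{k+1},\Omega)\le{\rm dist}_{\cM_k}(p^k,\Omega)+\|z^{k+1}-p^k\|_{\cM_k}$, bound the last term by $\delta_k\|z^{k+1}-z^k\|_{\cM_k}\le\delta_k(\|z^{k+1}-p^k\|_{\cM_k}+\|q^k\|_{\cM_k})$, rearrange to get $\|z^{k+1}-p^k\|_{\cM_k}\le\frac{\delta_k}{1-\delta_k}\|q^k\|_{\cM_k}$, and use $\|q^k\|_{\cM_k}\le{\rm dist}_{\cM_k}(z^k,\Omega)$ once more. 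Finally convert from $\cM_k$ to $\cM_{k+1}$ via $(1+\nu_k)\cM_k\succeq\cM_{k+1}$, which supplies the factor $(1+\nu_k)$ in $\mu_k$.

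**Asymptotics and the R-linear statement.** The expression \eqref{eq:assymRate} follows by taking $\limsup$: $\nu_k\to0$, $\delta_k\to0$ (both summable), $c_k\uparrow c_\infty$, and $\limsup\lambda_{\max}(\cM_k)=\lambda_\infty$, so $\limsup\mu_k=\kappa\lambda_\infty/\sqrt{c_\infty^2+\kappa^2\lambda_\infty^2}$, which is $<1$ whenever $c_\infty>0$ (always true since $\{c_k\}$ is bounded away from zero) and equals $0$ when $c_\infty=\infty$; I would also remark that because $\delta_k,\nu_k\to0$, one has $\mu_k<1$ for all $k$ large, so \eqref{eq:rate-mPPA} genuinely gives eventual linear (indeed, if $c_\infty=\infty$, superlinear) decrease. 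The bound \eqref{eq:Rrate} is then immediate: ${\rm dist}(z^{k+1},\Omega)\le{\rm dist}_{\cM_{k+1}}(z^{k+1},\Omega)/\sqrt{\lambda_{\min}(\cM_{k+1})}\le\mu_k\,{\rm dist}_{\cM_k}(z^k,\Omega)/\sqrt{\lambda_{\min}(\cM_{k+1})}$.

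**Main obstacle.** The delicate point is not any single inequality but ensuring the error bound is \emph{legitimately applicable at every iteration} — i.e., that $\cP_k(z^k)$ stays within the radius $r$ governing $\kappa$. This requires carefully chaining the recursion \eqref{eq:distzk} with the specified smallness of ${\rm dist}_{\cM_0}(z^0,\Omega)$ and the fact that $\Pi(1+\nu_k)<\infty$; one must check that the reference point used in the error bound, namely $\Pi_\Omega(p^k)$, and the point $p^k$ itself both lie in the relevant sublevel region, and that switching reference points between $\Pi_\Omega(z^k)$ (for Proposition~\ref{prop:pkqk}(c)) and $\Pi_\Omega(p^k)$ (for the error bound) does not lose a constant. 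A secondary subtlety is the interplay of the weighted norms: the error bound is stated in the \emph{unweighted} Euclidean norm, so passing to $\|\cdot\|_{\cM_k}$ costs factors of $\sqrt{\lambda_{\max}(\cM_k)}$ and $\sqrt{\lambda_{\min}(\cM_k)}$ that must be tracked exactly to reproduce the stated $\mu_k$ — getting the $\lambda_{\max}(\cM_k)$ versus $\sqrt{\lambda_{\max}(\cM_k)}$ powers right inside the square root is the one place where a sign-or-exponent slip would break the theorem.
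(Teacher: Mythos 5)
Your proposal follows the paper's own Rockafellar--Luque route almost step for step: keep the iterates inside the radius $r$ via \eqref{eq:distzk} and the nonexpansiveness of $\cP_k$, apply the error bound at $\cP_k(z^k)$ using $c_k^{-1}\cM_k\cQ_k(z^k)\in\cT(\cP_k(z^k))$, play it against the Pythagoras-type inequality of Proposition~\ref{prop:pkqk} to obtain \eqref{eq-dist}, then account for criterion (B) and the factor $(1+\nu_k)$, and finish \eqref{eq:assymRate} and \eqref{eq:Rrate} by taking limits and using $\cM_{k+1}\succeq\lambda_{\min}(\cM_{k+1})\cI$. The one genuinely different detail is your treatment of the inexactness: you write ${\rm dist}_{\cM_k}(z^{k+1},\Omega)\le{\rm dist}_{\cM_k}(\cP_k(z^k),\Omega)+\norm{z^{k+1}-\cP_k(z^k)}_{\cM_k}$ and deduce from (B) that $\norm{z^{k+1}-\cP_k(z^k)}_{\cM_k}\le\frac{\delta_k}{1-\delta_k}\norm{\cQ_k(z^k)}_{\cM_k}$, which yields the factor $(1+\nu_k)\big(\frac{\delta_k}{1-\delta_k}+\theta_k\big)$ with $\theta_k=\kappa\lambda_{\max}(\cM_k)/\sqrt{c_k^2+\kappa^2\lambda^2_{\max}(\cM_k)}$. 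This is smaller than the paper's $\mu_k=(1+\nu_k)(1-\delta_k)^{-1}\big(\delta_k+(1+\delta_k)\theta_k\big)$ (the paper routes the estimate through $\Pi_{\Omega}(\cP_k(z^k))$ and picks up an extra $(1+\delta_k)$), so your version implies \eqref{eq:rate-mPPA} and is in fact slightly cleaner.

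Two repairs are needed in the core estimate. (i) Your displayed chain bounds $\norm{\cM_k\cQ_k(z^k)}\le\lambda_{\max}(\cM_k)\norm{\cQ_k(z^k)}$ with the \emph{unweighted} $\norm{\cQ_k(z^k)}$; eliminating that against the $\cM_k$-weighted Pythagoras bound costs an extra factor involving $\lambda_{\min}(\cM_k)$ and does not reproduce the stated $\theta_k$. Use instead $\norm{\cM_k q}\le\sqrt{\lambda_{\max}(\cM_k)}\,\norm{q}_{\cM_k}$ together with ${\rm dist}_{\cM_k}\le\sqrt{\lambda_{\max}(\cM_k)}\,{\rm dist}$, which is exactly how the paper arrives at \eqref{eq-dist}; you flagged this as the delicate spot, but the inequalities as written need this fix. (ii) The Pythagoras inequality from Proposition~\ref{prop:pkqk} should be taken with reference point $\Pi_{\Omega}(z^k)$, giving ${\rm dist}^2_{\cM_k}(\cP_k(z^k),\Omega)+\norm{\cQ_k(z^k)}^2_{\cM_k}\le{\rm dist}^2_{\cM_k}(z^k,\Omega)$, rather than with $\bar z=\Pi_{\Omega}(\cP_k(z^k))$ on the left (the Euclidean projection need not minimize the $\cM_k$-distance); stated this way it is all that the elimination requires. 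With these adjustments, and your (correct) remark that the radius-$r$ bookkeeping must reconcile weighted and unweighted distances by choosing the error-bound radius accordingly, the argument is complete and matches the paper's.
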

}
\begin{proof}
		From \eqref{eq:distzk} in Theorem \ref{thm:mPPA}, we know that for all $k\ge 0$,
		$ {\rm dist}_{\cM_k}(z^k,\Omega) 
		\le
		\Pi_{k=0}^{\infty}(1 + \nu_k){\rm dist}_{\cM_0}(z^0,\Omega) + \sum_{k=0}^{\infty} \epsilon_k(1+\nu_k) \le r,
		$
		and consequently,
		\[
		{\rm dist}_{\cM_k}(\cP_k({z^k}),\Omega)
		\le \norm{\cP_k(z^k) - \Pi_{\Omega}(z^k)}_{{\cM_k}}	    
		{= \norm{\cP_k(z^k) - \cP_k(\Pi_\Omega(z^k))}_{\cM_k} } 
		\le {\rm dist}_{\cM_k}(z^k,\Omega)\le r \quad \forall k\ge 0.
		\]
	From Proposition \ref{prop:pkqk} (a), we have
	\[
	c_k^{-1}\cM_k\cQ_k(z^k) \in \cT(\cP_k(z^k)),
	\]
	which, together with Assumption \ref{assump:errorbound}, implies that for all $k\ge 0$
	\[
	{\rm dist}(\cP_{k}(z^k),\Omega) \le \kappa c_k^{-1}
	\norm{\cM_k\cQ_k(z^k)}.
	\]
	It further implies that for all $k\ge 0$,
	\[
	\frac{1}{\sqrt{\lambda_{\max}(\cM_k)}} {\rm dist}_{\cM_k}(\cP_k(z^k),\Omega) \le {{\rm dist}(\cP_k(z^k),\Omega)}
	\le \sqrt{\lambda_{\max}(\cM_k)}\kappa c_k^{-1} \norm{\cQ_k(z^k)}_{\cM_k}.
	\]
	Now taking $\bar z = \Pi_{\Omega}(z^k)$, we deduce from \eqref{eq:Qszpz}
	that for all $k\ge 0$,
	\begin{equation}
	\label{eq-Q}
    \begin{aligned}
		\norm{\cQ_k(z^k)}^2_{\cM_k} \le{}& \norm{z^k - \Pi_{\Omega}(z^k)}^2_{{\cM_k}}
	- \norm{\cP_k(z^k) - \Pi_{\Omega}(z^k)}^2_{\cM_k} \\[5pt]
	\le{}& {\rm dist}_{\cM_k}^2(z^k,\Omega) - {\rm dist}_{\cM_k}^2(\cP_k(z^k),\Omega).
	\end{aligned}
	\end{equation}
	Therefore, it holds that
	\begin{equation}
	\label{eq-dist}
	{\rm dist}_{\cM_k}(\cP_k(z^k),\Omega) \le \frac{\kappa\lambda_{\max}(\cM_k)}{\sqrt{c_k^2 + \kappa^2\lambda^2_{\max}(\cM_k)}} {\rm dist}_{\cM_k}(z^k,\Omega)\quad \forall\,k\ge0.
	\end{equation}
		Under stopping criterion (B), we further have for all $k\ge 0$,
	\begin{eqnarray*}
		&& \hspace{-0.7cm}
		\norm{z^{k+1} - \Pi_{\Omega}(\cP_k(z^k))}_{\cM_k}
		\;\le\;  \norm{z^{k+1} - \cP_k(z^k)}_{\cM_k} + \norm{\cP_k(z^k) - \Pi_{\Omega}(\cP_k(z^k))}_{\cM_k} \\[5pt]
		&\le& \delta_k\norm{z^{k+1} - z^k}_{\cM_k} + \norm{\cP_k(z^k) - \Pi_{\Omega}(\cP_k(z^k))}_{\cM_k}\\[5pt]
		&\le& \delta_k\big(\norm{z^{k+1} - \Pi_{\Omega}(\cP_k(z^k))}_{\cM_k}
		+ \norm{z^k - \Pi_{\Omega}(\cP_k(z^k)}_{\cM_k}  \big)
		+ \norm{\cP_k(z^k) - \Pi_{\Omega}(\cP_k(z^k))}_{\cM_k}.
	\end{eqnarray*}
	Thus,
	\begin{eqnarray*}	&& \hspace{-0.7cm}
		(1-\delta_k) \norm{z^{k+1} - \Pi_{\Omega}(\cP_k(z^k))}_{\cM_k}
		\;\le \;  \delta_k\norm{z^k - \Pi_{\Omega}(\cP_k(z^k)}_{\cM_k}
		+ \norm{\cP_k(z^k) - \Pi_{\Omega}(\cP_k(z^k))}_{\cM_k}.
	\end{eqnarray*}
	Now
	\begin{eqnarray*}
		&& \hspace{-0.7cm}
		\delta_k \norm{z^k - \Pi_{\Omega}(\cP_k(z^k)}_{\cM_k}
		\;\le\; \delta_k \norm{\cP_k(z^k)  - \Pi_{\Omega}(\cP_k(z^k)}_{\cM_k} + \delta_k \norm{\cQ_k(z^k)}_{\cM_k}
		\\[5pt]
		&\leq & \delta_k \norm{\cP_k(z^k)  - \Pi_{\Omega}(\cP_k(z^k)}_{\cM_k} + \delta_k
		{\rm dist}_{\cM_k}(z^k,\Omega),
	\end{eqnarray*}
	where the last inequality follows from \eqref{eq-Q}.
	By using the above inequality in the previous one, we get
	\begin{eqnarray*}
		(1-\delta_k) \norm{z^{k+1} - \Pi_{\Omega}(\cP_k(z^k))}_{\cM_k}
		\;\le \;  \delta_k {\rm dist}_{\cM_k}(z^k,\Omega)
		+ (1+\delta_k) {\rm dist}_{\cM_k}(\cP_k(z^k),\Omega).
	\end{eqnarray*}
	Therefore, from the last inequality and \eqref{eq-dist},  it holds that for all $k \ge 0$,
	\begin{align*}
	{\rm dist}_{\cM_{k+1}}(z^{k+1},\Omega) \le &{}(1+\nu_k){\rm dist}_{\cM_k}(z^{k+1},\Omega) \\ \le &{} (1+\nu_k)\norm{z^{k+1} - \Pi_{\Omega}(\cP_k(z^k))}_{\cM_k}
	\le  {\mu_k {\rm dist}_{\cM_k}(z^k,\Omega),}
	\end{align*}
	where $\mu_k = (1+\nu_k)(1-\delta_k)^{-1}\Big(\delta_k +  (1+\delta_k) \kappa\lambda_{\max}(\cM_k) / \sqrt{c_k^2 + \kappa^2\lambda^2_{\max}(\cM_k)}\Big)$\;.
	That is, \eqref{eq:rate-mPPA} holds for all $k\ge 0$.
    Since for all $k\ge 0$, $\cM_k \succeq \lambda_{\min}\cI$, \eqref{eq:assymRate} and $\eqref{eq:Rrate}$ can be obtained through simple calculations.
\end{proof}
\begin{remark}
	\label{rmk:ratepPPA}
	{Suppose that $\{ \delta_k \}$ in criterion (B) is nonincreasing and $\nu_k\equiv 0$ for all $k\ge 0$. Since $\{c_k\}$ is nondecreasing and $\lambda_{\max}(\cM_k)$ is nonincreasing, we know that $\{\mu_k\}$ is nonincrasing.
	Therefore, if one chooses $c_0$ large enough such that $\mu_0 < 1$, then we have $\mu_k\le \mu_0 < 1$ for all $k\ge 0$. The inequality \eqref{eq:rate-mPPA} thus implies the global Q-linear convergence of $\{ {\rm dist}_{\cM_k}(z^k,\Omega) \}$. In addition, \eqref{eq:Rrate} implies that for all $k\ge 0$,
	\[
	{\rm dist}(z^{k+1},\Omega) \le \big({\rm dist}_{\cM_0}(z^0,\Omega)/\sqrt{\lambda_{\min}}\big) \Pi_{i=0}^k \mu_i
	\le
	(\mu_0)^{k+1} \big({\rm dist}_{\cM_0}(z^0,\Omega)/\sqrt{\lambda_{\min}}\big),
	\]
	i.e., $\{
	{\rm dist}(z^k,\Omega)\}$ converges globally R-linearly.}
\end{remark}
}
\section{A semismooth Newton proximal augmented Lagragian method}

{
Note that we can equivalently rewrite problem (D) in the following minimization form:
\begin{equation*}
\mbox{(D)} \quad -\min\Big\{g(y):=\delta_K^* (A^*y - c) - b^Ty \Big\}.
\end{equation*}
Associated with this unconstrained formulation, we write the augmented Lagrangian function following the framework developed in \cite[Examples 11.46 and 11.57]{rockafellar2009variational}. To do so, we first identify (D) with the problem of minimizing $g(y) = \widetilde{g}(y,0)$ over $\R^m$ for
\[
\widetilde g (y,\xi) = -b^T y + \delta_K^*(A^*y - c + \xi) \quad \forall\,(y,\xi)\in\R^m \times \R^n.
\]
Obviously, $\widetilde g$ is jointly convex in $(y,\xi)$. Now, we are able to write down the Lagrangian function $l:\R^m\times \R^n$ through partial dualization as follows:
\[
l(y;x) := \inf_{\xi} \left\{
\widetilde g(y,\xi) - \inprod{x}{\xi}
\right\}
= -b^T y - \inprod{x}{c - A^* y} - \delta_K(x).
\]
Thus, the KKT conditions associated with (P) and (D) are given by
\begin{equation}
\label{eq:kktpd}
-b + Ax = 0, \quad A^*y - c \in \partial \delta_K(x),\quad (x,y)\in\R^n\times \R^m.
\end{equation}
Given $\sigma > 0$, the augmented Lagrangian function corresponding to (D) can be obtained by
\begin{align*}
L_{\sigma}(y;x) :={}& \sup_{s\in\R^n}\Big\{
l(y;s) - \frac{1}{2\sigma}\norm{s-x}^2
\Big\} \\[5pt]
={}& -b^T y  - \inf_{s\in\R^n} \Big\{
\delta_K(s) + \inprod{s}{c - A^* y} + \frac{1}{2\sigma}\norm{s-x}^2
\Big\} \\[5pt]
={}& -b^T y - \inprod{\Pi_K(x - \sigma(c - A^*y))}{c - A^*y} - \frac{1}{2\sigma}\norm{\Pi_K(x - \sigma(c - A^*y)) - x}^2.
\end{align*}
We propose to solve (D) via an inexact proximal augmented Lagrangian method. Our algorithm is named as the semi-smooth Newton inexact proximal augmented Lagrangian ({\sc Snipal}) method
because we will design a semi-smooth Newton method to solve the underlying augmented Lagrangian subproblems. Its template is given as follows.

\begin{algorithm}[H]
	\caption{{\sc Snipal}: Semi-smooth Newton inexact proximal augmented Lagrangian}
	\label{alg:pdp}
	Let $\sigma_0,\sigma_\infty>0$ be given parameters, $\{\tau_k\}_{k=0}^{\infty}$ be a given
	{nonincreasing} sequence such that $\tau_k >0$ for all $k\ge0$. Choose $(x^0,y^0)\in \R^n\times \R^m$. For $k=1,\ldots$, perform the following steps in each iteration.
	\begin{description}
		\item[\bf Step 1.]
		Compute
\begin{equation}
y^{k+1} \approx \mbox{argmin}_{y\in\R^m} \Big\{  L_{\sig_k}(y;x^k)
+ \frac{\tau_k}{2\sig_k}\norm{y-y^k}^2 \Big\}
\label{eq:suby}
\end{equation}
via the semismooth Newton method.
		\item[\bf Step 2.]
		Compute
		$
		x^{k+1} = \Pi_K\big(x^k - \sigma_k(c - A^*y^{k+1})\big).
		$
\item[Step 3.] Update $\sig_{k+1} \uparrow \sig_\infty \leq \infty$.
	\end{description}
\end{algorithm}

Note that different from the classic proximal method of multipliers in \cite{rockafellar1976augmented} with $\tau_k \equiv 1$ for all $k$,  we allow an adaptive choice of the parameter $\tau_k$ in the proximal term $\frac{\tau_k}{2\sigma_k}\norm{y-y^k}^2$ in the inner subproblem \eqref{eq:suby} of Algorithm {\sc Snipal}. Here, the proximal term is added to guarantee the existence of the optimal solution to the inner subproblem \eqref{eq:suby}, and to ensure the positive definiteness of the coefficient matrix of the underlying semi-smooth Newton linear system. {Moreover}, our numerical experience with {\sc Snipal} indicates that having
the additional flexibility of {choosing the parameter $\tau_k$ can help to improve} the practical performance of the algorithm. We  emphasize here that comparing with \cite{rockafellar1976augmented}, our modifications {focus} more on the  computational and implementational aspects.

While the introduction of the parameters $\{\tau_k\}$ brings us more flexibility and some promising numerical advantages, it also makes the convergence analysis {of the algorithm} more challenging. Fortunately, we are able to rigorously characterize the connection between our Algorithm {\sc Snipal} and the preconditioned PPA studied in Section \ref{sec:mPPA}. As one will see in the subsequent text, this connection allows us to conduct a comprehensive convergence analysis for Algorithm {\sc Snipal}. {From the convergence analysis, we also note that $\frac{\tau_k}{2\sigma_k}\norm{y-y^k}^2$ can be replaced by a more general proximal term, i.e., $\frac{1}{2\sigma_k}\norm{y-y^k}_{T_k}^2$ with a symmetric positive definite matrix $T_k$.}

\subsection{Global convergence properties of {\sc Snipal}}
\label{sec:globalCon-snipal}
In this section, we present a comprehensive analysis for the convergence properties of {\sc Snipal}. The global convergence and global linear-rate convergence of {\sc Snipal} are presented as an application of the theory of the preconditioned PPA.

To establish the connection between {\sc Snipal} and the preconditioned PPA, we first introduce some notation. To this end, for $k=0,1,\ldots,$ and any given
$(\bar y, \bar x)\in \R^m\times \R^n$, define the function
\begin{equation}
\label{pAL}
P_k(\bar y, \bar x) :=  \displaystyle\arg \minimax_{y,x}\Big\{l(y,x) + \frac{\tau_k}{2\sigma_k}\|y-\bar y\|^2  - \frac{1}{2\sigma_k}\|x-\bar x\|^2\Big\}.
\end{equation}
Corresponding to the closed proper convex-concave function $l$, we can define the maximal monotone operator $\mathcal{T}_l$ \cite[Corollary  37.5.2]{rockafellar1970convex}, by
\begin{align*}
\mathcal{T}_l(y,x) :={}& \{(y',x') \,|\, (y',-x')\in\partial l(y, x)\} \\[5pt]
={}& \{(y',x')\,|\, y'= -b + Ax, \ x'\in c - A^*y + \partial \delta_K(x) \},
\end{align*}
whose corresponding inverse operator is given by
\begin{equation}\label{Tinverse}
\mathcal{T}^{-1}_l(y',x') := \arg\minimax_{y,x}\,\{l(y,x)-\langle y',y\rangle  + \langle x',x\rangle\}.
\end{equation}
Since $K$ is a polyhedral set,  {$\partial \delta_K$ is known to be a polyhedral multifunction} (see, e.g., \cite[p.~108]{Klatte1995nonsmooth}). As the sum of two polyhedral multifunctions is also polyhedral, $\cT_l$ is also polyhedral.
Define, for $k=0,1,\ldots,$
\begin{equation}\label{eq-Lambda}
\Lambda_k = \text{Diag}\,(\tau_k I_m, I_n)\succ 0.
\end{equation}
The optimal solution of problem \eqref{pAL}, i.e., $P_k({\bar{y}},\bar{x})$, can be obtained via  the following lemma.

\begin{lemma}\label{lemma:ppa}
	For all $k\geq 0$, it holds that
	\begin{equation}\label{PPA_formulate}
	P_k(\bar y,\bar x) = (\Lambda_k + \sigma_k \cT_{l})^{-1}\Lambda_k(\bar y,\bar x)\quad
	\forall\, (\bar y, \bar x)\in\R^m\times\R^n.
	\end{equation}
	If $(y^*,x^*)\in\cT_l^{-1}(0)$, then $P_k(y^*,x^*) = (y^*,x^*)$.
\end{lemma}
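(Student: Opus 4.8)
The plan is to establish \eqref{PPA_formulate} by matching the saddle-point optimality conditions of the proximal minimax problem defining $P_k(\bar y,\bar x)$ with the inclusion characterizing the resolvent $(\Lambda_k+\sigma_k\cT_l)^{-1}\Lambda_k$. First I would record the structural facts. Writing $\phi_k(y,x):=l(y,x)+\tfrac{\tau_k}{2\sigma_k}\|y-\bar y\|^2-\tfrac{1}{2\sigma_k}\|x-\bar x\|^2$, the function $\phi_k$ is a closed proper saddle function, convex in $y$ and concave in $x$, obtained from the closed proper saddle function $l$ by adding a smooth strongly convex term in $y$ and a smooth strongly concave term in $x$. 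Since $\cT_l$ is maximal monotone by \cite[Corollary 37.5.2]{rockafellar1970convex} and $\Lambda_k=\text{Diag}(\tau_k I_m,I_n)\succ 0$, Lemma \ref{lemma:precondPPA} (applied with $\cM=\Lambda_k$, $\alpha=\sigma_k$, $\cT=\cT_l$) shows that $(\Lambda_k+\sigma_k\cT_l)^{-1}\Lambda_k$ is single-valued with full domain; I denote its value at $(\bar y,\bar x)$ by $(\hat y,\hat x)$. The goal then becomes to prove $P_k(\bar y,\bar x)=\{(\hat y,\hat x)\}$.

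For the inclusion $P_k(\bar y,\bar x)\subseteq\{(\hat y,\hat x)\}$, I would take any saddle point $(y^0,x^0)$ of $\phi_k$: by definition $y^0$ minimizes the convex function $\phi_k(\cdot,x^0)$ and $x^0$ maximizes the concave function $\phi_k(y^0,\cdot)$, so the smooth sum rule applied to the quadratic proximal terms gives $\tfrac{\tau_k}{\sigma_k}(\bar y-y^0)\in\partial_y l(\cdot,x^0)(y^0)$ (the convex subdifferential in $y$), while $\tfrac{1}{\sigma_k}(x^0-\bar x)$ is a supergradient of the concave function $l(y^0,\cdot)$ at $x^0$. By the definition $\cT_l(y,x)=\{(y',x')\mid(y',-x')\in\partial l(y,x)\}$, these two facts are exactly $\big(\tfrac{\tau_k}{\sigma_k}(\bar y-y^0),\,\tfrac{1}{\sigma_k}(\bar x-x^0)\big)\in\cT_l(y^0,x^0)$; multiplying by $\sigma_k$ and using the block-diagonal form of $\Lambda_k$ rewrites this as $\Lambda_k(\bar y,\bar x)-\Lambda_k(y^0,x^0)\in\sigma_k\cT_l(y^0,x^0)$, i.e.\ $(y^0,x^0)=(\Lambda_k+\sigma_k\cT_l)^{-1}\Lambda_k(\bar y,\bar x)=(\hat y,\hat x)$ by single-valuedness. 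For the reverse inclusion $(\hat y,\hat x)\in P_k(\bar y,\bar x)$, I would run this chain backwards: from $\Lambda_k(\bar y,\bar x)-\Lambda_k(\hat y,\hat x)\in\sigma_k\cT_l(\hat y,\hat x)$, unwinding the definitions of $\cT_l$ and of the subgradients of the saddle function $l$ yields $l(y,\hat x)\ge l(\hat y,\hat x)+\tfrac{\tau_k}{\sigma_k}\langle\bar y-\hat y,y-\hat y\rangle$ for all $y$ and $l(\hat y,x)\le l(\hat y,\hat x)-\tfrac{1}{\sigma_k}\langle\bar x-\hat x,x-\hat x\rangle$ for all $x$; the elementary identity $\langle\bar z-\hat z,z-\hat z\rangle+\tfrac12\|z-\bar z\|^2=\tfrac12\|\hat z-\bar z\|^2+\tfrac12\|z-\hat z\|^2$ (applied with $z=y$ and with $z=x$) then absorbs the proximal terms and delivers $\phi_k(\hat y,x)\le\phi_k(\hat y,\hat x)\le\phi_k(y,\hat x)$ for all $y,x$, i.e.\ $(\hat y,\hat x)$ is a saddle point of $\phi_k$. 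Combining the two inclusions gives \eqref{PPA_formulate}.

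The final assertion is then immediate from the formula: if $(y^*,x^*)\in\cT_l^{-1}(0)$ then $0\in\cT_l(y^*,x^*)$, hence $\Lambda_k(y^*,x^*)\in\Lambda_k(y^*,x^*)+\sigma_k\cT_l(y^*,x^*)=(\Lambda_k+\sigma_k\cT_l)(y^*,x^*)$, so $P_k(y^*,x^*)=(\Lambda_k+\sigma_k\cT_l)^{-1}\Lambda_k(y^*,x^*)=(y^*,x^*)$. I expect the only real difficulty to be bookkeeping: keeping Rockafellar's sign conventions consistent throughout — the placement of the minus sign in $\cT_l(y,x)=\{(y',x')\mid(y',-x')\in\partial l(y,x)\}$ and, correspondingly, the differing signs of the proximal gradients in the $y$- and $x$-components — so I would pin these conventions down explicitly at the start and carry them through. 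Everything else is supplied for free: existence and uniqueness of the saddle point and single-valuedness of the resolvent come from Lemma \ref{lemma:precondPPA} together with the strong convexity--concavity of $\phi_k$, and the ``absorb the quadratic'' steps are one-line completions of squares.
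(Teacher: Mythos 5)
Your proposal is correct: matching the saddle-point optimality conditions of the strongly convex--concave function $l(y,x)+\tfrac{\tau_k}{2\sigma_k}\|y-\bar y\|^2-\tfrac{1}{2\sigma_k}\|x-\bar x\|^2$ with the inclusion $\Lambda_k(\bar y,\bar x)-\Lambda_k(\hat y,\hat x)\in\sigma_k\cT_l(\hat y,\hat x)$, in both directions, is exactly the standard argument (a direct extension of Rockafellar's correspondence for the proximal method of multipliers) that the paper relies on, since it states this lemma without giving a proof; your sign conventions for $\cT_l$ and the completion-of-squares step check out, and the fixed-point assertion follows as you say. The only point left implicit, which is harmless, is that nonemptiness of $\cT_l(\hat y,\hat x)$ forces $\hat x\in K$, so all the saddle values in your inequalities are finite.
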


In {\sc Snipal}, at $k$-th iteration, denote
\begin{equation}
\label{eq:subpsi}
\psi_k(y): = L_{\sigma_k}(y;x^k) + \frac{\tau_k}{2\sigma_k}\norm{y - y^k}^2.
\end{equation}
From the property of the proximal mapping, we know that
$\psi_k$ is continuously differentiable and
\[
\nabla \psi_k(y) = -b +  A \,\Pi_K\big(x^k + \sigma_k(A^*y - c)\big)
+ {\tau_k}\sig_k^{-1}(y-y^k).
\]
As a generalization of Proposition 8 in  \cite{rockafellar1976augmented}, the following proposition about the weighted distance between $(y^{k+1},x^{k+1})$ generated by {\sc Snipal} and $P_k(y^k,x^k)$ is important for designing the stopping criteria for the subproblem \eqref{eq:suby} and establishing the connection between {\sc Snipal} and
the preconditioned PPA.

\begin{proposition}
	\label{prop:vx-ippa}
	Let  $P_k$, $\Lambda_k$ and $\psi_k$ be defined in  \eqref{pAL}, \eqref{eq-Lambda} and \eqref{eq:subpsi}, respectively. Let $(y^{k+1},x^{k+1})$ be generated by {Algorithm {\sc Snipal}} at iteration $k+1$. It holds that
	\begin{equation}
	\label{eq-err-ippa}
	\norm{ (y^{k+1}, x^{k+1}) - P_k(y^k,x^k)}_{\Lambda_k}
	\le \frac{\sigma_k}{\min(\sqrt{\tau_k},1)}\,
	\norm{\grad \psi_k(y^{k+1})}.
	\end{equation}
\end{proposition}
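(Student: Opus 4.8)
The plan is to deduce \eqref{eq-err-ippa} from the monotonicity of $\cT_l$, after using Lemma~\ref{lemma:ppa} to rewrite $P_k(y^k,x^k)$ as an exact resolvent step. First I would certify two membership facts on ${\rm gph}\,\cT_l$. Writing $(\tilde y,\tilde x):=P_k(y^k,x^k)$, formula \eqref{PPA_formulate} gives $\Lambda_k(y^k,x^k)-\Lambda_k(\tilde y,\tilde x)\in\sigma_k\cT_l(\tilde y,\tilde x)$, which, using $\Lambda_k=\text{Diag}(\tau_kI_m,I_n)$ from \eqref{eq-Lambda}, means exactly that the pair $\big(\tfrac{\tau_k}{\sigma_k}(y^k-\tilde y),\ \tfrac1{\sigma_k}(x^k-\tilde x)\big)\in\cT_l(\tilde y,\tilde x)$. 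On the other hand, Step~2 of {\sc Snipal} reads $x^{k+1}=\Pi_K\big(x^k+\sigma_k(A^*y^{k+1}-c)\big)$, which by the Moreau characterization of the projection onto $K$ is equivalent to $\tfrac1{\sigma_k}(x^k-x^{k+1})\in c-A^*y^{k+1}+\partial\delta_K(x^{k+1})$; combining this with the expression for $\nabla\psi_k$ recalled just before the proposition (so that $-b+Ax^{k+1}=\nabla\psi_k(y^{k+1})-\tfrac{\tau_k}{\sigma_k}(y^{k+1}-y^k)$) shows that the pair $\big(\nabla\psi_k(y^{k+1})-\tfrac{\tau_k}{\sigma_k}(y^{k+1}-y^k),\ \tfrac1{\sigma_k}(x^k-x^{k+1})\big)\in\cT_l(y^{k+1},x^{k+1})$.

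Next I would apply the monotonicity of $\cT_l$ to these two points of its graph. Subtracting the two $\cT_l$-elements, the $y^k$- and $x^k$-dependent pieces cancel, leaving $\big(\nabla\psi_k(y^{k+1})-\tfrac{\tau_k}{\sigma_k}(y^{k+1}-\tilde y),\ \tfrac1{\sigma_k}(\tilde x-x^{k+1})\big)$; pairing with $(y^{k+1},x^{k+1})-(\tilde y,\tilde x)$ and using nonnegativity, the quadratic terms assemble into $\tfrac1{\sigma_k}\|(y^{k+1},x^{k+1})-(\tilde y,\tilde x)\|_{\Lambda_k}^2$, giving
\[
\tfrac1{\sigma_k}\,\big\|(y^{k+1},x^{k+1})-(\tilde y,\tilde x)\big\|_{\Lambda_k}^2 \;\le\; \big\langle y^{k+1}-\tilde y,\ \nabla\psi_k(y^{k+1})\big\rangle .
\]

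To close the estimate I would use Cauchy--Schwarz on the right-hand side together with the norm comparison $\|y^{k+1}-\tilde y\|\le \min(\sqrt{\tau_k},1)^{-1}\|(y^{k+1},x^{k+1})-(\tilde y,\tilde x)\|_{\Lambda_k}$, which follows from $\Lambda_k\succeq\min(\tau_k,1)\,\cI$; this is precisely where the factor $\min(\sqrt{\tau_k},1)$ enters. Cancelling one factor of $\|(y^{k+1},x^{k+1})-(\tilde y,\tilde x)\|_{\Lambda_k}$ (the case where it vanishes being trivial) and multiplying through by $\sigma_k$ yields \eqref{eq-err-ippa}.

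The main obstacle is the bookkeeping in the first paragraph: one has to translate Step~2 and the inexact stationarity residual $\nabla\psi_k(y^{k+1})$ into \emph{genuine} membership in ${\rm gph}\,\cT_l$ at the same point $(y^{k+1},x^{k+1})$, which hinges on the Moreau identity linking $\Pi_K$ and $\partial\delta_K$ and on the already-stated form of $\nabla\psi_k$ (itself coming from the known gradient formula for the augmented Lagrangian). Once both points are on ${\rm gph}\,\cT_l$, the remainder is the standard monotonicity-plus-Cauchy--Schwarz manipulation in the spirit of \cite[Proposition~8]{rockafellar1976augmented}. An alternative, essentially equivalent, route is to observe that these same inclusions say $(y^{k+1},x^{k+1})=P_k\big(y^k+\tfrac{\sigma_k}{\tau_k}\nabla\psi_k(y^{k+1}),\,x^k\big)$ exactly, and then to invoke the $\Lambda_k$-nonexpansiveness of $P_k$ from Proposition~\ref{prop:pkqk}(c); this variant produces the slightly sharper constant $\sigma_k/\sqrt{\tau_k}$, which still implies \eqref{eq-err-ippa}.
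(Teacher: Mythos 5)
Your proposal is correct, and its underlying mechanism is the same as the paper's: both hinge on the membership $\big(\nabla\psi_k(y^{k+1})-\tfrac{\tau_k}{\sigma_k}(y^{k+1}-y^k),\,\tfrac{1}{\sigma_k}(x^k-x^{k+1})\big)\in\cT_l(y^{k+1},x^{k+1})$ and on the resolvent identity \eqref{PPA_formulate}. The only differences are presentational: the paper obtains this membership by citing \cite[Proposition 7]{rockafellar1976augmented} rather than re-deriving it from the projection characterization as you do (both are fine, since the stated formula for $\nabla\psi_k$ and Step~2 give it directly), and the paper's actual proof is precisely your ``alternative route'' --- it writes $(y^{k+1},x^{k+1}) = (\Lambda_k+\sigma_k\cT_l)^{-1}\Lambda_k\big(\Lambda_k^{-1}(\sigma_k\nabla\psi_k(y^{k+1}),0)+(y^k,x^k)\big)$ and invokes the $\Lambda_k$-nonexpansiveness of the resolvent from Proposition~\ref{prop:pkqk}, obtaining the bound $\sigma_k\|\nabla\psi_k(y^{k+1})\|/\sqrt{\tau_k}$ before relaxing it to the stated constant. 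Your main path simply unrolls that nonexpansiveness into the raw monotonicity-plus-Cauchy--Schwarz computation, which checks out line by line and yields the same estimate.
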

\begin{proof}
	Since
	$\grad \psi_k(y^{k+1}) = \grad_y L_{\sigma_k}(y^{k+1},x^k) + \tau_k\sigma_k^{-1}(y^{k+1}-y^k),$
  {we have}
	\[\grad \psi_k(y^{k+1}) + \sigma_k^{-1}\tau_k(y^{k}-y^{k+1}) =
	\grad_y L_{\sigma_k}(y^{k+1},x^k),\]
	which, by \cite[Proposition 7]{rockafellar1976augmented}, implies
	$(\grad \psi_k(y^{k+1}) + \sigma_k^{-1}\tau_k(y^{k}-y^{k+1}), \sigma_k^{-1}(x^k - x^{k+1}))\in \cT_l(y^{k+1},x^{k+1}).$
	Thus,
	\[\sigma_k(\grad \psi_k(y^{k+1}),0) + \Lambda_k\big((y^k,x^k) - (y^{k+1},x^{k+1})\big)
	\in \sigma_k \cT_l(y^{k+1},x^{k+1})\]
	and
	$\sigma_k(\grad \psi_k(y^{k+1}),0) + \Lambda_k (y^k,x^k) \in (\Lambda_k +\sigma_k\cT_{l})(y^{k+1},x^{k+1}),$
	or equivalently,
	\[(y^{k+1},x^{k+1}) = (\Lambda_k +\sigma_k\cT_{l})^{-1}\Lambda_k\big(\Lambda_k^{-1}(
	\sigma_k\grad \psi_k(y^{k+1}),0) + (y^k,x^k)\big).\]
	Then, by Lemma \ref{lemma:ppa} and Proposition \ref{prop:pkqk}, we know that
	\begin{equation*}\label{eq-error}
	\begin{aligned}
	&\norm{ (y^{k+1}, x^{k+1}) - P_k(y^k,x^k)}_{\Lambda_k}\\[5pt]
	={}& \norm{(\Lambda_k +\sigma_k{\cT_{l}})^{-1}\Lambda_k\big(\Lambda_k^{-1}(\sigma_k\grad \psi_k(y^{k+1}),0) + ({y^k},x^k)\big) - (\Lambda_k + \sigma_k \cT_{l})^{-1}\Lambda_k\big(({y^k},x^k)\big)}_{\Lambda_k}\\[5pt]
	\le {}& \norm{\Lambda_k^{-1}\big(\sigma_k\grad \psi_k(y^{k+1}),0\big)}_{\Lambda_k} \le {} \frac{\sigma_k}{\min{(\sqrt{\tau_k},1)}}\norm{\grad \psi_k(y^{k+1})}.
	\end{aligned}
	\end{equation*}
	This completes the proof for the proposition.
\end{proof}
}

Based on Proposition \ref{prop:vx-ippa}, we propose the following stopping criteria for the approximate computation of $y^{k+1}$ in Step 1 of {\sc Snipal}:
\begin{align*}
& \mbox{(A')}\quad  \norm{\grad\psi_k(y^{k+1})} \le  \frac{\min(\sqrt{\tau_k},1)}{\sigma_k}\epsilon_k,\quad 0 \le \epsilon_k,  \quad \mbox{$\sum_{k=0}^{\infty}$} \epsilon_k < \infty,
\\[5pt]
&\mbox{(B')}\quad  \norm{\grad\psi_k(y^{k+1})} \le \frac{\delta_k\min(\sqrt{\tau_k},1)}{\sigma_k}\norm{(y^{k+1},x^{k+1}) - (y^k,x^k)}_{\Lambda_k}, \; 0\le \delta_k < 1, \;
\mbox{$\sum_{k=0}^{\infty}$} \delta_k < \infty.
\end{align*}
For the convergence of {\sc Snipal}, we also need the following assumption on $\tau_k$:
\begin{assumption}
	\label{ass:tauk}
The positive sequence $\{\tau_k\}$ is non-increasing and bounded away from zero, i.e., $\tau_k \downarrow \tau_{\infty} > 0$ for some positive constant $\tau_{\infty}$.
\end{assumption}
Under Assumption \ref{ass:tauk}, we have that for all $k\ge 0$,
\[\Lambda_{k}\succeq \Lambda_{k+1} \mbox{ and } \Lambda_{k} \succeq \min(1,\tau_{\infty}) I_{m+n}.\]
We now present the global convergence result for {\sc Snipal} in the following Theorem. Similar to the case in \cite{rockafellar1976augmented}, it is in fact a direct application of Theorem \ref{thm:mPPA}.

\begin{theorem}[Global convergence of {\sc Snipal}]
	\label{thm:snipalconvergence}
Suppose that Assumptions \ref{assump:fesPD} and \ref{ass:tauk} hold. Let
$\{(y^k,x^k)\}$ be the sequence generated by Algorithm
{\sc Snipal} with the stopping criterion (A'). Then $\{(y^k,x^k)\}$ is bounded. In addition, $\{x^k\}$ converges to an optimal solution of (P) and
$\{y^k\}$ converges to an optimal solution of (D), respectively.
\end{theorem}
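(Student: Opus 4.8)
The plan is to show that Algorithm \textsc{Snipal} is exactly an instance of the preconditioned PPA \eqref{alg:mPPA} applied to the maximal monotone operator $\cT_l$, and then invoke Theorem \ref{thm:mPPA}. First I would set $z^k = (y^k,x^k)$, $c_k = \sigma_k$ and $\cM_k = \Lambda_k$. By Lemma \ref{lemma:ppa}, the exact map satisfies $P_k(y^k,x^k) = (\Lambda_k + \sigma_k\cT_l)^{-1}\Lambda_k(y^k,x^k) = \cP_k(z^k)$ in the notation of Section \ref{sec:mPPA}, so the iterate $(y^{k+1},x^{k+1})$ produced by Steps 1--2 is an approximation of $\cP_k(z^k)$. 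Proposition \ref{prop:vx-ippa} then translates the stopping criterion (A') for $\grad\psi_k(y^{k+1})$ into the bound $\norm{(y^{k+1},x^{k+1}) - P_k(y^k,x^k)}_{\Lambda_k} \le \epsilon_k$ with $\sum_k \epsilon_k < \infty$, which is precisely criterion (A) of Section \ref{sec:mPPA}.

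Next I would verify that the hypotheses on $\{\cM_k\}$ and $\{c_k\}$ needed in Theorem \ref{thm:mPPA} hold in this setting. Under Assumption \ref{ass:tauk}, $\Lambda_k \succeq \Lambda_{k+1}$ and $\Lambda_k \succeq \min(1,\tau_\infty) I_{m+n}$, so \eqref{eq:condMk} holds with $\nu_k \equiv 0$, $\lambda_{\min} = \min(1,\tau_\infty)$ and $\lambda_\infty = \max(1,\tau_0)$; moreover $\{\sigma_k\}$ is bounded away from zero since $\sigma_k \ge \sigma_0 > 0$. It remains to check that $\Omega := \cT_l^{-1}(0) \ne \emptyset$. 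This follows from Assumption \ref{assump:fesPD}: a point $(y^*,x^*) \in \cT_l^{-1}(0)$ is, by the definition of $\cT_l$, exactly a pair satisfying the KKT system \eqref{eq:kktpd}, and the existence of such a pair is guaranteed by the existence of optimal solutions to (P) and (D) together with standard LP duality (strong duality plus complementary slackness). Here $A$ having full row rank is not needed for nonemptiness of $\Omega$, though it is used elsewhere; I would just cite convex LP duality.

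With these checks in place, Theorem \ref{thm:mPPA} applies verbatim: $\{z^k\} = \{(y^k,x^k)\}$ is bounded and converges to some $z^\infty = (y^\infty,x^\infty)$ with $0 \in \cT_l(z^\infty)$. Unwinding the definition of $\cT_l$ once more, $0 \in \cT_l(y^\infty,x^\infty)$ is equivalent to the KKT conditions \eqref{eq:kktpd} holding at $(x^\infty,y^\infty)$, which in turn implies that $x^\infty$ is optimal for (P) and $y^\infty$ is optimal for (D). One small point to handle carefully is that the $x$-update in Step 2 of \textsc{Snipal} (the projection defining $x^{k+1}$) must be shown to be the correct $x$-component of the approximate solution referenced in Proposition \ref{prop:vx-ippa}; but this is precisely what is established in the proof of that proposition, so I would simply cite it.

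\textbf{Main obstacle.} The only genuinely nontrivial step is the bookkeeping identification of $\cP_k$ (in the abstract notation, with $\cM_k = \Lambda_k$, $c_k = \sigma_k$, $\cT = \cT_l$) with $P_k$ and of criterion (A') with criterion (A) --- everything else is a direct appeal to Lemma \ref{lemma:ppa}, Proposition \ref{prop:vx-ippa} and Theorem \ref{thm:mPPA}. I expect no real difficulty, since the preceding lemmas were set up precisely to make this reduction clean; the proof will essentially be a few lines of translation followed by "apply Theorem \ref{thm:mPPA}."
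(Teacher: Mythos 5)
Your proposal is correct and follows exactly the route the paper intends: the paper itself proves this theorem by noting it is a direct application of Theorem \ref{thm:mPPA}, using Lemma \ref{lemma:ppa} and Proposition \ref{prop:vx-ippa} to identify {\sc Snipal} with the preconditioned PPA (with $\cT=\cT_l$, $\cM_k=\Lambda_k$, $c_k=\sigma_k$) and criterion (A') with criterion (A), plus the nonemptiness of $\cT_l^{-1}(0)$ from Assumption \ref{assump:fesPD}. The only trivial slip is that $\limsup_k \lambda_{\max}(\Lambda_k)=\max(1,\tau_\infty)$ rather than $\max(1,\tau_0)$, which is immaterial here.
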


Since $\cT_l$ is a polyhedral multifunction, we know from Lemma \ref{lemma:errorbound} and Remark \ref{rmk:errorboundpolyhedral} that $\cT_l$ satisfies Assumption \ref{assump:errorbound}.
Let $r$ be a positive number satisfying $r > \sum_{i=0}^{\infty} \epsilon_k$ with $\epsilon_k$ being the summable sequence in (A').
Then, there exists $\kappa >0$ associated with $r$ such that for any $(y,x)\in\R^{m}\times\R^n$ satisfying
${\rm dist}((y,x),{\cT_l^{-1}(0))}\le r,$
\begin{equation}
\label{eq:errorboundTl}
{\rm dist}((y,x),\cT^{-1}_l(0)) \le \kappa\, {\rm dist}(0,\cT_l(y,x)).
\end{equation}
As an application of Theorem \ref{thm:rate}, we are now ready to show the asymptotic superlinear convergence of {\sc Snipal} in the following theorem.
\begin{theorem}[Asymptotic superlinear convergence of {\sc Snipal}]

Suppose that Assumptions \ref{assump:fesPD} and \ref{ass:tauk} hold and the initial $z^0:=(y^0,x^0)$ satisfies ${\rm dist}_{\Lambda_0}(z^0,\cT_l^{-1}(0)) \le r -\sum_{i=0}^{\infty} \epsilon_k$. Let $\kappa$ be the modulus given in \eqref{eq:errorboundTl} and $\{z^k:=(y^k,x^k)\}$ be the infinite sequence generated by the preconditioned {PPA} {under criteria (A') and (B')}. Then, for all $k\ge 0$, it holds that
\begin{equation}
\label{eq:asymSnipal}
\begin{aligned}
&{\rm dist}_{\Lambda_{k+1}}(z^{k+1},\cT_l^{-1}(0)) \le { \mu_k {\rm dist}_{\Lambda_{k}}({z^{k}},\cT_l^{-1}(0))}, \\[5pt]
&{\rm dist}(z^{k+1},\cT_l^{-1}(0)) \le {\frac{\mu_k}{\sqrt{\min(1,\tau_{k+1})}}} {\rm dist}_{\Lambda_k}(z^k,\cT_l^{-1}(0)),
\end{aligned}
\end{equation}
where $\mu_k = (1-\delta_k)^{-1}\Big(
\delta_k + (1+\delta_k)\kappa \gamma_k /\sqrt{\sigma_k^2 + \kappa^2 \gamma_k^2}\Big)$ with $\gamma_k := \max(\tau_k,1)$ and
\begin{align*}
\lim_{k\to \infty} \mu_k = \mu_{\infty} = \frac{\kappa \gamma_{\infty}}{\sqrt{\sigma_{\infty}^2 + {\kappa^2} \gamma_{\infty}^2}} < 1 \quad (\mu_{\infty} = 0 \mbox{ if } \sigma_{\infty} = \infty),
\end{align*}
with $\gamma_{\infty} = \max(\tau_{\infty}, 1)$.
\end{theorem}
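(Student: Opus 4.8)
The plan is to derive this theorem as a direct corollary of Theorem~\ref{thm:rate} applied to the maximal monotone operator $\cT_l$, exactly in the spirit of how Rockafellar~\cite{rockafellar1976augmented} obtained the convergence of the proximal method of multipliers from the abstract proximal point framework. First I would verify that all the hypotheses of Theorem~\ref{thm:rate} are met in the present setting: the preconditioners are $\cM_k = \Lambda_k = \mathrm{Diag}(\tau_k I_m, I_n)$, and under Assumption~\ref{ass:tauk} we have $\Lambda_k \succeq \Lambda_{k+1}$ (so $\nu_k \equiv 0$ is a valid summable sequence), $\Lambda_k \succeq \min(1,\tau_\infty) I_{m+n}$ (so $\lambda_{\min} = \min(1,\tau_\infty)$), and $\limsup_k \lambda_{\max}(\Lambda_k) = \max(\tau_\infty,1) =: \gamma_\infty$ since $\tau_k \downarrow \tau_\infty$. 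The scaling constants $c_k$ in the abstract scheme correspond to $\sigma_k$ here, which is assumed nondecreasing with $\sigma_k \uparrow \sigma_\infty$, matching the hypothesis of Theorem~\ref{thm:rate}. Assumption~\ref{assump:errorbound} for $\cT_l$ holds because $\cT_l$ is polyhedral (Remark~\ref{rmk:errorboundpolyhedral}), giving the modulus $\kappa$ associated with the chosen $r$ as in~\eqref{eq:errorboundTl}; and the stated initialization condition ${\rm dist}_{\Lambda_0}(z^0,\cT_l^{-1}(0)) \le r - \sum_{i=0}^\infty \epsilon_k$ is precisely the required starting-point condition (with $\Pi_{k=0}^\infty(1+\nu_k) = 1$ since $\nu_k \equiv 0$).

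Next I would check that {\sc Snipal} genuinely \emph{is} the preconditioned PPA~\eqref{alg:mPPA} for $\cT_l$ with the parameters above, together with the fact that criteria (A$'$) and (B$'$) imply criteria (A) and (B). The first point is Lemma~\ref{lemma:ppa}: $P_k(\bar y,\bar x) = (\Lambda_k + \sigma_k \cT_l)^{-1}\Lambda_k(\bar y,\bar x) = \cP_k(\bar y, \bar x)$. For the second point, Proposition~\ref{prop:vx-ippa} gives
\[
\norm{(y^{k+1},x^{k+1}) - P_k(y^k,x^k)}_{\Lambda_k} \le \frac{\sigma_k}{\min(\sqrt{\tau_k},1)}\norm{\grad\psi_k(y^{k+1})},
\]
so substituting the bound from (A$'$) yields $\norm{z^{k+1} - \cP_k(z^k)}_{\Lambda_k} \le \epsilon_k$, which is criterion (A); substituting the bound from (B$'$) yields $\norm{z^{k+1} - \cP_k(z^k)}_{\Lambda_k} \le \delta_k \norm{z^{k+1} - z^k}_{\Lambda_k}$, which is criterion (B). Hence Theorem~\ref{thm:rate} applies verbatim, and~\eqref{eq:rate-mPPA} and~\eqref{eq:Rrate} translate into the two displayed inequalities in~\eqref{eq:asymSnipal} after identifying $\cM_k \to \Lambda_k$, $c_k \to \sigma_k$, $\lambda_{\max}(\cM_k) \to \max(\tau_k,1) = \gamma_k$, and $\lambda_{\min}(\cM_{k+1}) \to \min(1,\tau_{k+1})$; the formula for $\mu_k$ and its limit $\mu_\infty = \kappa\gamma_\infty / \sqrt{\sigma_\infty^2 + \kappa^2\gamma_\infty^2}$ come directly from~\eqref{eq:assymRate} with $\nu_k \equiv 0$ and $\lambda_\infty = \gamma_\infty$.

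The only genuinely delicate bookkeeping point — which I would address carefully rather than treating as routine — is the identification $\lambda_{\max}(\Lambda_k) = \max(\tau_k,1)$ and $\lambda_{\min}(\Lambda_k) = \min(\tau_k,1)$, since $\Lambda_k$ is a block-diagonal scaling that is \emph{not} a scalar multiple of the identity; one must confirm that the argument of Theorem~\ref{thm:rate}, which at the key step~\eqref{eq-dist} uses the sandwich $\lambda_{\min}(\cM_k)\cI \preceq \cM_k \preceq \lambda_{\max}(\cM_k)\cI$ to relate $\norm{\cdot}_{\cM_k}$ to the Euclidean norm and back, goes through with these block-diagonal eigenvalue bounds. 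Since $\tau_k$ need not be $\ge 1$ or $\le 1$, one should keep $\gamma_k = \max(\tau_k,1)$ and $\min(1,\tau_k)$ as genuinely two-sided quantities throughout; this is exactly the reason the statement writes $\gamma_k := \max(\tau_k,1)$ rather than just $\tau_k$. Once this correspondence is pinned down, there is nothing left to prove: the result is a transcription of Theorem~\ref{thm:rate} into the {\sc Snipal} notation, so I would keep the proof to a few lines invoking Lemma~\ref{lemma:ppa}, Proposition~\ref{prop:vx-ippa}, and Theorem~\ref{thm:rate}.
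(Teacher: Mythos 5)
Your proposal is correct and follows exactly the route the paper intends: the paper presents this theorem as a direct application of Theorem~\ref{thm:rate}, with Lemma~\ref{lemma:ppa} identifying {\sc Snipal} as the preconditioned PPA for $\cT_l$ with $\cM_k=\Lambda_k$, $c_k=\sigma_k$, and Proposition~\ref{prop:vx-ippa} converting (A$'$), (B$'$) into (A), (B). Your identifications $\lambda_{\max}(\Lambda_k)=\max(\tau_k,1)$, $\lambda_{\min}(\Lambda_{k+1})=\min(1,\tau_{k+1})$, $\nu_k\equiv 0$ are precisely what is needed, so nothing is missing.
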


\begin{remark}
	\label{rmk:ratepsnipal}
	Suppose that $\{ \delta_k \}$ in criterion (B') is nonincreasing.
We know	from Remark \ref{rmk:ratepPPA} that if one chooses $\sigma_0$ large enough such that $\mu_0 < 1$, then $\mu_k\le \mu_0 < 1$ for all $k\ge 0$. Thus, from \eqref{eq:asymSnipal}, we have the global linear convergence of $\{{\rm dist}_{\Lambda_{k}}(z^k, \cT_l^{-1}(0)) \}$
and $\{
{\rm dist}(z^k,\cT_{l}^{-1}(0))
\}$.
\end{remark}

\subsection{Semismooth Newton method for subproblems \eqref{eq:suby}}
In this subsection, we discuss how the subproblem \eqref{eq:suby} in {\sc Snipal} can be solved efficiently. As is mentioned in the name of {\sc Snipal}, we propose to solve \eqref{eq:suby} via an inexact semismooth Newton method which converges at least {locally}
superlinearly. In fact, the {local} convergence rate can even be quadratic. 

For given $(\tilde x, \tilde y)\in\R^n\times \R^m$ and $\tau, \sigma >0$, define {the} function $\psi:\R^m \to \R$ as
\[
\psi(y) := L_{\sigma}(y;\tilde x) +
\frac{\tau}{2\sigma}\norm{ y - \tilde y}^2 \quad \forall y\in\R^m,
\]
and
we aim to solve
\begin{equation}\label{eq:probpsi}
\min_{y\in\R^m} \psi(y).
\end{equation}
Note that $\psi$ is strongly convex and continuously differentiable over $\R^m$ with
\[
\nabla \psi(y) = -b +  A \,\Pi_K\big(\tilde x + \sigma(A^*y - c)\big)
+ {\tau}\sig^{-1}(y-\tilde y).
\]
Hence, we know that for any given {$\alpha \ge \inf_y \psi(y)$}, the level set $\cL_{\alpha}:= \{y\in \R^m \mid\psi(y)\le \alpha \}$ is a nonempty closed and bounded convex set. In addition, problem \eqref{eq:probpsi} has a unique optimal solution
{which we denote} as $\bar y$.

As an unconstrained optimization problem, the optimality condition for \eqref{eq:probpsi} {is given by}
\begin{equation}
\label{eq:nonsmootheq}
\nabla \psi(y) = 0, \quad y\in \R^m,
\end{equation}
and $\bar y$ is the unique solution to this nonsmooth equation. Since $\Pi_K$ is a {Lipschitz} continuous piecewise affine function, we have that
$\nabla \psi$ is strongly semismooth. Hence, we can solve the nonsmooth equation \eqref{eq:nonsmootheq} via a {semismooth} Newton method. For this purpose, we define the following operator:
\[
\hat\partial^2\psi(y): = \tau\sigma^{-1} I_{m} + \sigma A\partial \Pi_K(\tilde x + \sigma(A^* y - c))A^* \quad
\forall y\in\R^m,
\]
where $\partial \Pi_{K}(\tilde x + \sigma(A^*y - c))$ is the Clarke subdifferential \cite{Clarke1983Optimization} of the Lipschitz continuous mapping $\Pi_K(\cdot)$ at $\tilde x + \sigma(A^*y - c)$.
Note that from \cite[Example 2.5]{Hiriart1984geeralized}, we have
that
\[
\hat\partial^2\psi(y) d = \partial^2\psi(y) d \quad \forall d\in\R^m,
\]
where $\partial^2\psi(y)$ denotes the generalized Hessian of $\psi$ at $y$. However, {we caution the reader that
it is} unclear whether $\hat\partial^2\psi(y) = \partial^2\psi(y)$.
 Given any $y\in \R^m$, define
\begin{equation}
\label{eq:genHessian}
H:= \tau\sigma^{-1}I_m + \sigma AUA^*
\end{equation}
with $U\in \partial \Pi_K(\tilde x + \sigma (A^*y - c))$.
Then, we know that $H\in\hat\partial^2\psi(y)$ and $H$ is symmetric positive definite.

After these preparations, we are ready to present the following semismooth Newton method for solving the nonsmooth equation \eqref{eq:nonsmootheq} and {we can} expect a fast {local} superlinear convergence.
\begin{algorithm}[H]
	\caption{{\sc Ssn}: A Semi-smooth Newton method for  solving \eqref{eq:nonsmootheq}
	({\sc Ssn}($\tilde{x},\tilde{y},\sigma,\tau$))}
	\label{alg:ssn}
	Given {$\tau>0, \sigma >0$}, choose parameters $\bar\eta\in(0,1),\gamma\in(0,1]$ and $\mu\in(0,1/2), \delta\in(0,1)$ and {set $y^0=\tilde{y}$}.	
	Iterate the following steps for $j = 0,1,\ldots$.	\begin{description}
		\item[\bf Step 1.]
	Choose $U_j\in \partial\Pi_K(\tilde x + \sigma (A^*y^j - c))$. Set
	$H_j: = \tau\sigma^{-1}I_{m} + \sigma AU_jA^*$. Solve the linear system
	\begin{equation}
	\label{eq:ssn-ls}
	H_j d = -\nabla \psi(y^j)
	\end{equation}
	exactly or by {a Krylov   iterative method} to find $d^j$ such  that
	$\norm{H_j d^j + \nabla \psi(y^j)} \le \min (\bar\eta, \norm{\nabla\psi(y^{j})}^{1+\gamma}). $
		\item[\bf Step 2.]
		(Line search)
		Set $\alpha_j = \delta^{m_j}$, where $m_j$ is the  first nonnegative integer $m$ for which
		\[
		\psi(y^j + \delta^m d^j) \le \psi(y^j) + \mu \delta^m\inprod{\nabla \psi(y^j)}{d^j}.
		\]
		\item[Step 3.] Set $y^{j+1} = y^j + \alpha_j d^j$.
	\end{description}
\end{algorithm}
The convergence results of Algorithm {\sc Ssn} are stated in the following theorem.
\begin{theorem}
	\label{thm:convergence_SSN}
Let $\{ y^j\}$ be the infinite sequence generated by Algorithm {\sc Ssn}. It holds that $\{y^j\}$ converges to the unique optimal solution $\bar y$ of \eqref{eq:probpsi} and $\norm{y^{j+1} - \bar y} = \cO(\norm{y^j - \bar y}^{1+\gamma})$.
\end{theorem}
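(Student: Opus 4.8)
The plan is to establish this as a standard convergence result for the globalized semismooth Newton method applied to a strongly convex, continuously differentiable function whose gradient is strongly semismooth. There are two parts: global convergence (the whole sequence $\{y^j\}$ converges to $\bar y$) and the local superlinear rate of order $1+\gamma$. I would organize the argument around the following ingredients: (i) the line search in Step 2 is well-defined and produces a strict decrease, so $\{\psi(y^j)\}$ is nonincreasing and bounded below by $\inf_y\psi$; (ii) every $H_j$ is symmetric positive definite with eigenvalues bounded below by $\tau\sigma^{-1}>0$ (from \eqref{eq:genHessian} and the fact that $U_j\succeq 0$ since $\Pi_K$ is the projection onto a box), so the Newton direction $d^j$ is a genuine descent direction and, moreover, $\inprod{\nabla\psi(y^j)}{d^j}$ can be controlled in terms of $\norm{\nabla\psi(y^j)}^2$; (iii) since $\psi$ is strongly convex and level-bounded, $\{y^j\}\subseteq\cL_{\psi(y^0)}$ is bounded.

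First I would prove global convergence. From the Armijo condition and boundedness of the level set, $\sum_j \alpha_j\,|\inprod{\nabla\psi(y^j)}{d^j}|<\infty$, so $\alpha_j\inprod{\nabla\psi(y^j)}{d^j}\to 0$. Using the inexactness bound $\norm{H_j d^j+\nabla\psi(y^j)}\le\bar\eta$ together with the uniform positive-definiteness and the uniform boundedness of $\{H_j\}$ on the bounded level set (the Clarke subdifferential $\partial\Pi_K$ is compact-valued and locally bounded, and $A$ is fixed), one gets that $\norm{d^j}$ is bounded above and $-\inprod{\nabla\psi(y^j)}{d^j}\ge c\norm{\nabla\psi(y^j)}^2$ for some $c>0$ and also $\norm{d^j}\ge c'\norm{\nabla\psi(y^j)}$. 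A now-standard argument (as in the semismooth Newton literature, e.g.\ the line-search analysis going back to Qi--Sun and used in subsequent ALM papers) shows the step sizes $\alpha_j$ stay bounded away from zero along any subsequence on which $\nabla\psi$ does not vanish, forcing $\nabla\psi(y^j)\to 0$; by strong convexity $\nabla\psi$ has a unique zero $\bar y$ and any cluster point must equal $\bar y$, so $y^j\to\bar y$.

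Next, the local rate. Once $y^j$ is close to $\bar y$, I would show that the full Newton step $\alpha_j=1$ is eventually accepted. This uses strong semismoothness of $\nabla\psi$ at $\bar y$: for $U_j\in\partial\Pi_K(\tilde x+\sigma(A^*y^j-c))$ we have $\norm{\nabla\psi(y^j)-\nabla\psi(\bar y)-H_j(y^j-\bar y)}=\cO(\norm{y^j-\bar y}^2)$, combined with the nonsingularity of all elements of $\hat\partial^2\psi$ (they are SPD with eigenvalues $\ge\tau\sigma^{-1}$, hence their inverses are uniformly bounded). Writing $y^j+d^j-\bar y = H_j^{-1}(H_jd^j+\nabla\psi(y^j)) + H_j^{-1}\big(\nabla\psi(\bar y)-\nabla\psi(y^j)-H_j(\bar y-y^j)\big)$ wait, more carefully: $y^j+d^j-\bar y = H_j^{-1}\big(H_jd^j+\nabla\psi(y^j)\big) - H_j^{-1}\big(\nabla\psi(y^j)-H_j(y^j-\bar y)\big)$, and since $\nabla\psi(\bar y)=0$ the second term is $-H_j^{-1}\big(\nabla\psi(y^j)-\nabla\psi(\bar y)-H_j(y^j-\bar y)\big)=\cO(\norm{y^j-\bar y}^2)$, while the first term is bounded by $\norm{H_j^{-1}}\min(\bar\eta,\norm{\nabla\psi(y^j)}^{1+\gamma})=\cO(\norm{\nabla\psi(y^j)}^{1+\gamma})=\cO(\norm{y^j-\bar y}^{1+\gamma})$ using Lipschitz continuity of $\nabla\psi$. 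Hence $\norm{y^j+d^j-\bar y}=\cO(\norm{y^j-\bar y}^{1+\gamma})$. A Taylor/descent-lemma estimate then shows the Armijo test at $\alpha=1$ holds for $j$ large (since $\mu<1/2$), so $y^{j+1}=y^j+d^j$ eventually, giving $\norm{y^{j+1}-\bar y}=\cO(\norm{y^j-\bar y}^{1+\gamma})$.

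The main obstacle is the transition step: verifying that the unit step size is eventually accepted by the Armijo line search. This requires carefully combining the superlinear-approximation estimate on $y^j+d^j-\bar y$ with a second-order expansion of $\psi$ along $d^j$ and the lower bound $-\inprod{\nabla\psi(y^j)}{d^j}\gtrsim\norm{\nabla\psi(y^j)}\norm{d^j}$, exploiting $\mu<1/2$; the rest is bookkeeping with the uniform bounds on $H_j$ and $H_j^{-1}$. I would also note that this theorem is essentially a specialization of the general globalized semismooth Newton convergence theory (cf.\ Qi--Sun and its refinements used in related augmented Lagrangian papers), so the proof can be kept brief by citing that machinery and verifying that the present $\psi$, $H_j$ satisfy its hypotheses.
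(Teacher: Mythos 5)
Your proposal is correct and follows essentially the same route as the paper: the paper's proof simply invokes the standard globalized semismooth Newton machinery of \cite{Zhao2010Newton} (Proposition 3.3 for the descent property, Theorem 3.4 for convergence via strong convexity, and the proof of Theorem 3.5 for the $1+\gamma$ rate), after noting the key fact you also identify, namely $H_j\in\hat\partial^2\psi(y^j)$ with $H_j\succeq\tau\sigma^{-1}I_m$ uniformly. The ingredients you spell out (boundedness of level sets, uniform bounds on $H_j$ and $H_j^{-1}$, strong semismoothness of $\nabla\psi$, and eventual acceptance of the unit step since $\mu<1/2$) are exactly what that cited machinery verifies.
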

\begin{proof}
We know from \cite[Proposition 3.3]{Zhao2010Newton} that $d^j$ is always a descent direction. Then, the strong convexity of $\psi$ and \cite[Theorem 3.4]{Zhao2010Newton} imply that $\{y^j\}$ converges to the unique optimal solution $\bar y$ of \eqref{eq:probpsi}. By \eqref{eq:genHessian}, we have that {the symmetric positive definite matrix $H_j \in \hat \partial^2 \psi(y^j)$ satisfies the property that $H_j \succeq \tau\sig^{-1}I_m$ for all $j$}. The desired results thus can be obtained by following the proof of \cite[Theorem 3.5]{Zhao2010Newton}. We omit the details here.
\end{proof}

\subsection{ Finite termination property of {\sc Snipal}}
In our numerical experience with {\sc Snipal}, we observe that {it} {nearly possesses a} certain finite convergence property for solving (P) and (D) when $\sigma_k$ and $1/\tau_k$ are sufficiently large. {We note that most available theoretical results corresponding to the finite termination property of proximal point algorithms require each subproblem involved to be solved exactly, e.g., see \cite{rockafellar1976monotone}, \cite{rockafellar1976augmented} and \cite{luque1984asymptotic}.
Hence, all these results cannot be directly adopted to support our numerical findings. In this section, we aim to
investigate the
finite termination property of {\sc Snipal} by showing that it is possible to obtain a solution pair of (P) and (D) without requiring the exact solutions of each and every subproblem involved in {the algorithm}.}

Our {analysis is based on} an interesting property called ``{\em staircase} property'' associated with subdifferential mappings of convex closed polyhedral functions.
 Let \[ f(x):= c^Tx + \delta_K(x) + \delta_{\{x\mid Ax = b\}}(x).\] 
 Clearly, $f$ is a convex closed polyhedral function. From
\cite[Sec. 6]{Eckstein1992Douglas} and earlier work in
 \cite{Durier1988locally,luque1984asymptotic}, we know that its subdifferential mapping enjoys the following  {\em staircase} property, i.e., there exists $\delta > 0$ such that
\begin{equation}\label{eq:staircase}
	w \in \partial f(x), \norm{w} \le \delta \Rightarrow 0 \in \partial f(x).
\end{equation}
Based on the {\em staircase} property of $\partial f$, we present the finite convergence property of {\sc Snipal} in the following theorem.
\begin{theorem}
	\label{thm:finite}
Suppose that {Assumptions \ref{assump:fesPD} and \ref{ass:tauk} hold}
and let $\{(y^l,x^l)\}$ be the infinite sequence generated by {\sc Snipal} with the stopping criterion ${\rm (A')}$. {For any given $k \ge 0$,
suppose that
$\bar{y}^{k+1}$ is an exact
solution to the following optimization problem:
\begin{eqnarray}
\bar{y}^{k+1} = \argmin_{ {y\in\R^m}} L_{\sigma_k}(y;x^k).
\label{eq-11}
\end{eqnarray}
}
{Then, the following results hold.}
\\[5pt]
(a)
The point $\bar{x}^{k+1} := \Pi_K\big(x^k - \sigma_k(c - A^*\bar y^{k+1})\big)$
is the unique solution to the following proximal problem:
\begin{eqnarray}
 \min\Big\{ c^T x +\frac{1}{2\sig_k}\norm{x-x^k}^2 \mid Ax = b, x\in K \Big\}.
 \label{eq-ppa-sub}
\end{eqnarray}
(b) There exists a positive scalar $\bar{\sig}$ independent of $k$
such that for all $\sig_k \geq \bar{\sig}$,
$\bar{x}^{k+1}$ also solves the problem (P).
\\[5pt]
(c) If  $x^k$ is a solution of (P),
then $\bar{y}^{k+1}$ also solves (D).
\end{theorem}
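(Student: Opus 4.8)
The overall strategy is to recognize the hypothetical exact augmented Lagrangian step on (D) as an exact proximal point step on (P), and then to combine this with the \emph{staircase} property \eqref{eq:staircase} of $\partial f$.

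For \textbf{part (a)}, I would begin from the stationarity condition $\nabla_y L_{\sigma_k}(\bar{y}^{k+1};x^k)=0$ for the exact subproblem \eqref{eq-11}. Reading off the closed-form gradient $\nabla_y L_{\sigma_k}(y;x^k) = -b + A\,\Pi_K\!\big(x^k+\sigma_k(A^*y-c)\big)$ (this is the formula for $\nabla\psi_k$ recorded right after \eqref{eq:subpsi}, with the $\tau_k$-proximal term set to zero), this equality is precisely $A\bar{x}^{k+1}=b$ with $\bar{x}^{k+1}=\Pi_K\!\big(x^k+\sigma_k(A^*\bar{y}^{k+1}-c)\big)$. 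Next I would use the projection/normal-cone identity $z-\Pi_K(z)\in\partial\delta_K(\Pi_K(z))$ at $z = x^k+\sigma_k(A^*\bar{y}^{k+1}-c)$ and divide by $\sigma_k>0$ to get $A^*\bar{y}^{k+1}-c-\tfrac{1}{\sigma_k}(\bar{x}^{k+1}-x^k)\in\partial\delta_K(\bar{x}^{k+1})$. Together with $A\bar{x}^{k+1}=b$ and $\bar{x}^{k+1}\in K$, these are exactly the KKT conditions of the strongly convex problem \eqref{eq-ppa-sub} with $\bar{y}^{k+1}$ as the multiplier of $Ax=b$; since the objective of \eqref{eq-ppa-sub} is strongly convex its solution is unique, so $\bar{x}^{k+1}$ must be that unique solution. (This is in essence \cite[Proposition 8]{rockafellar1976augmented} specialized to the proximal method of multipliers with a vanishing proximal term in $y$.)

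For \textbf{part (b)}, part (a) shows $\bar{x}^{k+1}={\rm prox}_{\sigma_k f}(x^k)$, hence the proximal residual $w^{k+1}:=\sigma_k^{-1}(x^k-\bar{x}^{k+1})$ lies in $\partial f(\bar{x}^{k+1})$. The goal is a bound on $\|w^{k+1}\|$ uniform in $k$. Fix any optimal solution $x^*$ of (P); then $0\in\partial f(x^*)$, so ${\rm prox}_{\sigma_k f}(x^*)=x^*$, and firm nonexpansiveness of ${\rm prox}_{\sigma_k f}$ yields $\|\bar{x}^{k+1}-x^*\|\le\|x^k-x^*\|$, whence $\|x^k-\bar{x}^{k+1}\|\le 2\|x^k-x^*\|\le 2\sup_{l\ge0}\|x^l-x^*\|=:C$. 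By Theorem \ref{thm:snipalconvergence} the SNIPAL sequence $\{x^l\}$ (produced with the inexact rule (A')) is bounded, so $C<\infty$ and does not depend on $k$. Setting $\bar{\sigma}:=C/\delta$ with $\delta$ the staircase constant in \eqref{eq:staircase}, every $\sigma_k\ge\bar{\sigma}$ gives $\|w^{k+1}\|\le C/\sigma_k\le\delta$, and then \eqref{eq:staircase} forces $0\in\partial f(\bar{x}^{k+1})$; i.e., $\bar{x}^{k+1}$ minimizes $f$, which is exactly solving (P).

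For \textbf{part (c)}, if $x^k$ solves (P) then $0\in\partial f(x^k)$, so $x^k$ also minimizes $f(\cdot)+\tfrac{1}{2\sigma_k}\|\cdot-x^k\|^2$; by the uniqueness established in (a) this forces $\bar{x}^{k+1}=x^k$. Substituting $\bar{x}^{k+1}=x^k$ into the inclusion from (a) makes the proximal correction vanish, leaving $A^*\bar{y}^{k+1}-c\in\partial\delta_K(x^k)$ and $Ax^k=b$ --- exactly the KKT system \eqref{eq:kktpd} for the pair $(x^k,\bar{y}^{k+1})$. Hence $\bar{y}^{k+1}$ is optimal for (D).

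The step I expect to be the crux is making the threshold $\bar{\sigma}$ in (b) independent of $k$: this is precisely where one must use that the genuine (inexactly computed) SNIPAL iterates $\{x^l\}$ are bounded, so that the \emph{hypothetical} exact step at index $k$ produces a proximal residual of magnitude $O(1/\sigma_k)$ that the staircase property annihilates once $\sigma_k$ exceeds $\bar{\sigma}$. The remaining arguments are routine manipulations of the augmented Lagrangian gradient, the projection identity, and convex KKT theory.
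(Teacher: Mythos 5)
Your proposal is correct and follows essentially the same route as the paper's proof: part (a) via the first-order condition of the exact augmented Lagrangian subproblem combined with the projection/normal-cone identity to verify the KKT system of \eqref{eq-ppa-sub}, part (b) via boundedness of the SNIPAL iterates from Theorem \ref{thm:snipalconvergence}, nonexpansiveness of the proximal mapping, and the staircase property \eqref{eq:staircase}, and part (c) via uniqueness of the proximal solution forcing $\bar{x}^{k+1}=x^k$ and then reading off the KKT conditions \eqref{eq:kktpd}. The only difference is cosmetic: you derive the subgradient inclusion explicitly where the paper states the KKT conditions in projection form and invokes the duality between \eqref{eq-11} and \eqref{eq-ppa-sub}.
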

\begin{proof}
(a) Observe that  the dual of \eqref{eq-11} is exactly
\eqref{eq-ppa-sub}, and
the KKT conditions associated with \eqref{eq-11} and \eqref{eq-ppa-sub}
are given as follows:
\begin{eqnarray}
 x =\Pi_{K}\big(x^k - \sigma_k(c - A^* y)\big), \quad A x -b = 0.
 \label{eq-KKT-sub}
\end{eqnarray}
Since  $\bar{y}^{k+1}$ is a solution of the {problem} \eqref{eq-11}, it holds from the optimality condition associated with \eqref{eq-11} that $A\Pi_K(x^k - \sigma_k(c - A^*\bar y^{k+1})) = b$.
Thus, $(\bar x^{k+1}, \bar y^{k+1})$ satisfy \eqref{eq-KKT-sub}.
Therefore, $\bar x^{k+1}$ solves \eqref{eq-ppa-sub}. The uniqueness {of
$\bar{x}^{k+1}$} follows directly from the strong convexity of \eqref{eq-ppa-sub}.

(b) By Theorem \ref{thm:snipalconvergence}, we know that $x^l \to x^*$ as $l\to \infty$ for some $x^*\in \partial f^{-1}(0)$. Therefore, there exists a constant $M>0$ (independent of $k$) such that
\begin{equation}
\label{eq:xlbounded}
\norm{x^l - x^*}\le M \quad \forall\ l\ge 0.
\end{equation}
From the optimality of $\bar{x}^{k+1}$ and the definition of  $f$, we have that
\[\frac{1}{\sigma_k}(x^{k} - \bar{x}^{k+1}) \in \partial f(\bar x^{k+1}). \]
It also holds from the nonexpansive property of the proximal mapping that $
\norm{\bar x^{k+1} - x^*} \le \norm{x^k - x^*},
$
which, together with \eqref{eq:xlbounded}, further implies that
\[\norm{\bar x^{k+1} - x^k} \le 2\norm{x^k - x^*} \le 2M.\]
Therefore, there exists $\bar \sigma>0$ (independent of $k$) such that
for all $\sigma_k \ge \bar \sigma $ and $k\ge 0$,
\[ \frac{1}{\sigma_k}\norm{\bar{x}^{k+1} - x^k} \le \frac{2M}{\bar \sigma} \le \delta, \]
where $\delta>0$ is the constant given in \eqref{eq:staircase}.
Thus, by using the ``staircase'' property \eqref{eq:staircase}, we know that
\[ 0 \in \partial f(\bar{x}^{k+1}).\]
That is, $\bar{x}^{k+1}$ solves the problem (P).

(c) Next, consider the case when $x^k$ is a solution of (P).
From the minimization property of $x^k$,
it is clear that
the unique solution of \eqref{eq-ppa-sub} must be $\bar{x}^{k+1}=x^k.$
Thus, $x^k = \Pi_K\big( x^k - \sigma_k(c-A^*\bar y^{k+1}) \big)$ and $Ax^k = b$.
Note that it can be equivalently rewritten as:
\[
A^*\bar y^{k+1} - c \in \partial\delta_K(x^k), \quad Ax^k = b,
\]
i.e., $({ x^k}, \bar y^{k+1})$ satisfy the KKT conditions for (P) and (D) in \eqref{eq:kktpd}. Thus, $\bar y^{k+1}$ solves (D).
\end{proof}

\begin{remark} We now remark on the significance of the above theorem.
{Essentially, it says that when $\sig_k$ is sufficiently large with $\sig_k \geq \bar{\sig}$,  then $\bar{x}^{k+1}$ solves (P), and
it holds that
$\bar{y}^{k+2} = \argmin L_{\sigma_{k+1}}(y;\bar{x}^{k+1})$ solves (D).}

From the fact that the {\sc Ssn} method used to {solve}
\eqref{eq:nonsmootheq} has the finite termination property \cite{Fischer1996finite,sun1998finite}, we know that
$y^{k+1}$ computed in Step 1 of {\sc Snipal} is in fact the exact solution of the subproblem $\min \psi_k(y)$
when {the corresponding linear system is solved exactly.} 
In addition, when  $\sigma_k$ is sufficiently large and $\tau_k$ is small enough, we have that
\[
{0 = \; \nabla L_{\sigma_k}(y^{k+1};x^k) + \tau_k\sigma_k^{-1}(y^{k+1} - y^k) \approx \nabla L_{\sigma_k}(y^{k+1};x^k),}
\]
and consequently, $y^{k+1}$ can be regarded as a highly accurate solution to the problem
$\min L_{\sigma_k}(y;x^k)$.
In this sense, Theorem \ref{thm:finite} explains the finite termination phenomenon in the practical performance of {\sc Snipal}.
\end{remark}

\section{Solving the linear systems arising from the semismooth Newton method}

Note that the most expensive operation in Algorithm {\sc Ssn} is the computation of the search direction $d\in\R^m$ {through} solving the linear system \eqref{eq:ssn-ls}. To ensure the efficiency of {\sc Ssn} and consequently that of {\sc Snipal}, in this section, we shall discuss efficient approaches for  {solving} \eqref{eq:ssn-ls} in Algorithm {\sc Ssn}. Given $c,\tilde x\in \R^n$, $\tilde y\in\R^m$, the parameters $\tau, \sigma >0$ and the current iterate of {\sc Ssn} $\hat y\in\R^m$, let
$$
 g := - \grad \psi(\hat{y}) =  R_p - \tau \sig^{-1} (\hat{y}-\tilde y),
$$
where $R_p = b - A \Pi_K(w(\hat y))$ with $w(\hat y): = \tilde x + \sigma(A^*\hat y - c)$.
At each {\sc Ssn} iteration, we need to solve a linear system of the form:
\begin{eqnarray}
 H \Delta y \;=\; g,
 \label{eq-Newton}
\end{eqnarray}
where $H = \tau \sig^{-1} I_m + \sigma AUA^*$ with
$U\in \partial \Pi_{K}(w(\hat y))$. Define the index set $\cJ = \{ i \mid l_i <  [w(\hat{y})]_i < u_i, i=1,\ldots,n\}$
and $p = |\cJ|$, i.e., the cardinality of $\cJ$. In the implementation, we always construct the generalized Jacobian matrix $U \in \partial \Pi_{K}(w(\hat y))$ as a diagonal matrix in the following manner:
\begin{equation*}
U = {\rm Diag}(u) \mbox{ with } u_i = \left\{
\begin{aligned}
& 1 \quad \mbox{if} \quad i\in \cJ, \\
& 0 \quad \mbox{otherwise,}
\end{aligned}
\right.
\quad i = 1,\ldots,n.
\end{equation*}
{Without} the loss of generality, we can partition
$A \equiv [A_\cJ, A_\cN]$ with $A_\cJ\in \R^{m\times p}$, $A_\cN\in\R^{m\times(n-p)}$, and
hence
\begin{eqnarray}
H =\sig A_\cJ A_\cJ^* + \tau \sig^{-1} I_m = \sig ( A_\cJ A_\cJ^*+\rho I_m)
\label{eq-H}
\end{eqnarray}
where $\rho := \tau \sig^{-2}$.
To solve the linear system \eqref{eq-Newton} efficiently, we need to consider various scenarios.
In the discussion below, we use ${\tt nnzden}(M)$ to denote the density of the nonzero elements
of a given matrix $M$.

(a) First, we consider the case where $p\geq m$ and the sparse Cholesky factorization of $A_\cJ A_\cJ^*$ can be
computed at a moderate cost. In this case, the main cost of solving the linear system
is in forming the matrix $A_\cJ A_\cJ^*$ at the cost of $O(m^2p\, {\tt nnzden}(A_\cJ))$
and computing the sparse Cholesky factorization of $A_\cJ A_\cJ^* + \rho I_m$.

{Observe that the index set {$\cJ$} generally changes from one SSN iteration to the next. However,
when the SSN method is converging, the index set $\cJ$ may only {change} slightly from the current iteration
to the next. In this case,  one can update the inverse of $H$ {via} a low-rank update by using the
Sherman-Morrison-Woodbury formula.}

When it is expensive to compute and factorize $H$, one would naturally use a preconditioned
conjugate gradient (PCG) method  {or the MINRES (minimim residual) method} to
solve \eqref{eq-Newton}.
Observe that the condition number of $H$ is given by
{$\kappa(H) =  ( \omega_{\max}^2 + \rho)/(\omega_{\min}^2 + \rho)$}
 if $p\geq m$, where
{$\ome_{\max},\ome_{\min}$} are the largest and smallest singular value of
$A_\cJ$, respectively.
{Note that when $A$ is not explicitly given as a matrix, one can compute 
the matrix-vector product $Hv$ as follows: $Hv = \sigma \rho v + \sigma A (e_\cJ \circ (A^* v))$ where
$e_\cJ\in\R^n$ is a $0$-$1$ vector whose non-zero entries are located at the index set $\cJ$,
and ``$\circ$'' denotes the elementwise product.
}

(b) Next we consider the case where  $p < m$. In this case, it is more economical to solve
\eqref{eq-Newton}
by using the Sherman-Morrison-Woodbury formula to get
\begin{eqnarray}
 \Delta y = H^{-1} g = \tau^{-1} \sig\big( I_m - P_\cJ \big) g,
\label{eq-dy}
\end{eqnarray}
where $P_\cJ =  A_\cJ G^{-1} A_\cJ^*$,
$G =\rho I_p +  A_\cJ^* A_\cJ \in \R^{p\times p}$.
Thus to compute $\Delta y$, one needs only to solve a smaller $p\times p$ linear system of equations
$Gv = A_\cJ^*g$.
Observe that  when $\rho \ll 1$,  $ \Delta y$ is approximately the orthogonal projection of $ \tau^{-1} \sigma g$
onto the null space of $A_\cJ^*$.

To solve \eqref{eq-dy}, one can compute the sparse Cholesky factorization of
the symmetric positive definite matrix $G\in \R^{p\times p}$ if the task can be done at a reasonable cost.
In this case, the main cost involved in \eqref{eq-dy} is in computing $A_\cJ^*A_\cJ$  at the cost
of $O(p^2 m\, {\tt nnzden}(A_\cJ))$ operations and the sparse Cholesky factorization of
$G = \rho I_p + A_\cJ^*A_\cJ.$

When it is too expensive to compute and factorize $G$,
one can use a Krylov iterative method to solve the $p\times p$ linear system of equations:
\begin{equation}\label{eq:Gv}
G v = (\rho I_p + A_\cJ^* A_\cJ) v =  A_\cJ^*g .
\end{equation}
To estimate the convergence rate of the
 Krylov iterative method, it is important for us to analyse the
conditioning of the above linear system, as is done in the next theorem.

\begin{theorem} Let $B\in \R^{m\times p}$ with $p<m$. Consider
linear system $G v = B^* g$, where $G=B^*B + \rho I_p$ and $g\in \R^m$.
Then the effective condition number for solving the system by the MINRES (minimum residual) method with zero initial point
is given by
\begin{eqnarray*}
 \kappa = { \frac{\ome_{\max}^2 + \rho}{\ome_{\min}^2+\rho}}
\end{eqnarray*}
where $\ome_{\max}$ is the largest singular value and $\ome_{\min} > 0$ is
the smallest positive singular value of $B$, respectively.
\end{theorem}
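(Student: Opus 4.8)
The plan is to exploit the structure of the Krylov space that MINRES generates when started from the zero vector. Write $b := B^* g$. With zero initial iterate, after $k$ steps MINRES produces $v_k \in \cK_k(G,b) := {\rm span}\{b, Gb, \ldots, G^{k-1}b\}$, and by the residual-polynomial characterization $v_k$ minimizes $\norm{b - Gv}$ over $v\in\cK_k(G,b)$, so that $\norm{b - Gv_k} = \min_{q}\norm{q(G)b}$ over polynomials $q$ of degree $\le k$ with $q(0)=1$. The first step is to observe that $b = B^*g \in {\rm Range}(B^*)$ and that ${\rm Range}(B^*)$ is an invariant subspace of $G = B^*B + \rho I_p$: indeed $B^*B$ maps ${\rm Range}(B^*)$ into itself and $\rho I_p$ acts as a scalar. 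Hence $\cK_k(G,b) \subseteq {\rm Range}(B^*)$ for every $k$, and likewise the exact solution $v^* = G^{-1}b$ and every error vector $v^* - v_k$ lie in ${\rm Range}(B^*)$.

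The second step is to diagonalize the restriction $G|_{{\rm Range}(B^*)}$, which is symmetric positive definite. Using the reduced singular value decomposition $B = \sum_{i:\ome_i>0} \ome_i u_i v_i^*$, the right singular vectors $\{v_i : \ome_i > 0\}$ form an orthonormal basis of ${\rm Range}(B^*)$, and $G v_i = (\ome_i^2 + \rho) v_i$. Thus the spectrum of $G|_{{\rm Range}(B^*)}$ is $\{\ome_i^2 + \rho : \ome_i > 0\} \subseteq [\ome_{\min}^2 + \rho,\; \ome_{\max}^2 + \rho]$; in particular the eigenvalue $\rho$ of $G$ arising from $\ker(B)$ — which would be relevant for a generic right-hand side — is never excited here.

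The third step is to combine these two facts with the standard MINRES error estimate. Expanding $b$ (and the initial error $v^*$) in the eigenbasis $\{v_i\}$ of the restricted operator, the worst-case residual reduction after $k$ steps is bounded by $\min_{q}\max_{\lambda \in \{\ome_i^2+\rho\}}|q(\lambda)|$ over normalized polynomials of degree $\le k$, hence by the Chebyshev estimate on the interval $[\ome_{\min}^2+\rho,\;\ome_{\max}^2+\rho]$. The resulting convergence factor is governed by the ratio of the endpoints of this interval, i.e. by $\kappa = (\ome_{\max}^2 + \rho)/(\ome_{\min}^2 + \rho)$, which is by definition the effective condition number for the run. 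This gives the claim.

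The main obstacle — and the point requiring care — is precisely the invariance argument of the first step: without it one would (incorrectly) include the eigenvalue $\rho$ of $G$ on $\ker(B)$ and arrive at the much larger value $(\ome_{\max}^2 + \rho)/\rho$. It is essential that the right-hand side $B^*g$, not merely the operator $G$, has the structure confining the whole Krylov sequence and the error to ${\rm Range}(B^*)$; one should also note that this is why the statement is specific to the zero (or, more generally, any ${\rm Range}(B^*)$-valued) initial point. The remaining ingredients — the textbook MINRES polynomial bound and the SVD computation of the restricted spectrum — are routine.
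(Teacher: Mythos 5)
Your proposal is correct and follows essentially the same route as the paper: the paper's proof likewise uses the SVD of $B$ to show that the MINRES residual polynomial applied to $B^*g$ only involves the eigenvalues $\ome_i^2+\rho$ with $\ome_i>0$ (the right-hand side has no component along $\ker(B)$, so the eigenvalue $\rho$ is never excited), and then invokes the standard Chebyshev minimax bound on $[\ome_{\min}^2+\rho,\,\ome_{\max}^2+\rho]$. Your invariant-subspace phrasing of the first step is just a more abstract rendering of the paper's explicit computation $p_k(G)B^*g = V_1\,p_k(\hat\Sigma^2+\rho I)\hat\Sigma U_1^Tg$.
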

\begin{proof} Consider the following full SVD of $B$:
\begin{eqnarray*}
 B = U \Sigma V^T = [U_1, U_2] \left[ \begin{array}{cc} \hat{\Sigma} & 0 \\ 0 & 0 \end{array}\right]
\left[\begin{array}{c} V_1^T \\ V_2^T \end{array} \right],
\end{eqnarray*}
where $\hat{\Sigma}$ is the diagonal matrix consisting of the
positive singular values of $B$.
Let $\P_k^0$ be the set of polynomials $p_k$ with degree at most $k$  and $p_k(0)=1$.
Then for $p_k\in \P_k^0$, we have that
\begin{eqnarray*}
 p_k(G) B^*g &=& V p_k(\Sigma^T\Sigma + \rho I)\Sigma^T U^T g
= [V_1, V_2] \left[ \begin{array}{cc} p_k(\hat{\Sigma}^2 + \rho I) \hat{\Sigma} & 0
\\ 0 & 0 \end{array}
\right] \left[\begin{array}{c} U_1^Tg \\ U_2^Tg \end{array} \right]
\\[5pt]
&=& V_1 p_k(\hat{\Sigma}^2 + \rho I) \hat{\Sigma} U_1^T g.
\end{eqnarray*}
Since the $k$-th iteration of the MINRES method computes an approximate solution $x_{k}$ such that
its residual $\xi = \bar{p}_k(G)B^*g$
satisfies the condition that
\begin{eqnarray*}
 {\norm{\xi}} = \norm{\bar{p}_k(G) B^*g} \;=\; \min_{p_k\in \P_k^0} \, \norm{p_k(G)B^*g}
\;\leq \; {\norm{\hat{\Sigma}U_1^T g}} \, \min_{p_k\in \P_k^0} \,  \norm{p_k(z)}_{[\ome_{\min}^2+\rho,\,\ome_{\max}^2+\rho]} ,
\end{eqnarray*}
thus we see that the convergence rate of the MINRES method
 is determined by the best approximation of the zero function by the polynomials
in $P_k^0$ over the interval
$[\ome_{\min}^2+\rho,\ome_{\max}^2+\rho]$.  
{More specifically, by \cite[Theorem 6.4]{Saad}, we have that 
$
\min_{p_k\in \P_k^0} \,  \norm{p_k(z)}_{[\ome_{\min}^2+\rho,\,\ome_{\max}^2+\rho]} \leq 2 \kappa^{-k}
$.}
Hence the convergence rate of the MINRES method is determined by $\kappa.$
\end{proof}

{After \eqref{eq:Gv} is solved via the MINRES {method}, one can compute the residual {vector} associated with system \eqref{eq-dy} without much difficulty.} Indeed, let the computed solution of \eqref{eq-dy} be given as follows:
\begin{eqnarray*}
 \widehat{\Delta y} = \tau^{-1} \sig (g - A_\cJ v)
\end{eqnarray*}
where $ G v = A_\cJ^* g - \xi$ with $\xi$ being the residual vector obtained from
the MINRES iteration.
Now the residual vector associated with \eqref{eq-dy} is given by
\begin{eqnarray*}
 \eta &:= & g - \cH \widehat{\Delta y} = g - \tau^{-1}\sig \cH g
+\tau^{-1} \sig \cH A_\cJ G^{-1} (A_\cJ^* g - \xi)
\\[5pt]
&=&  g - \tau^{-1}\sig \cH (g - P_\cJ g)
  -\tau^{-1}\sig \cH A_\cJ G^{-1}\xi
\\[5pt]
&=&   -\tau^{-1}\sig \cH A_\cJ G^{-1} \xi
\;=\; - \rho^{-1} A_\cJ \xi,
\end{eqnarray*}
 where the last equation follows directly from the fact that $\cH A_\cJ = \sig A_\cJ G$.
 Based on the computed $\eta$, one can
 check the termination condition for solving the linear system
 in \eqref{eq:ssn-ls}.

{Now, we are ready to bound the condition numbers of the Newton linear systems involved in {\sc Snipal}.
As can be observed from the above discussions, for both cases (a) and (b), the effective condition number of the linear system  involved is upper bounded by
\[
\kappa
\le 1+ \frac{\ome_{\max}^2 }{\rho},
\]	
where {$\ome_{\max}$} is the largest singular value of $A_{\cJ}$ and $\rho = \tau\sigma^{-2}$.
Since $A_\cJ$ is a sub-matrix of $A$, it holds that ${\ome_{\max} \le \norm{A}_2}$. Hence, for any linear {system} involved in the $k$-th iteration of {\sc Snipal}, we can provide an upper bound for the condition number as follows:
\begin{equation}
\label{eq:kappak}
\kappa \le 1 + \frac{\norm{A}_2^2 \sigma_{k}^2}{\tau_k}.
\end{equation}
From our assumptions on {\sc Snipal}, we note that $\sigma_k \le \sigma_{\infty}$ and $\tau_k \ge \tau_{\infty} >0$ for all $k\ge 0$.
Hence, for all the linear systems involved in {\sc Snipal}, there exists an uniform upper bound for the corresponding condition number:
\[
\kappa \le 1 + \frac{\norm{A}_2^2\sigma_{\infty}^2}{\tau_{\infty}}.
\]
As long as $\sigma_{\infty} < +\infty$, we have shown that all these linear systems have bounded condition numbers. This differs significantly from the {setting in}  interior-point based algorithms where the condition numbers of the corresponding normal equations are {asymptotically} unbounded.
The competitive advantage of {\sc Snipal} can be partially explained from the above observation. Meanwhile, in
{the} $k$-th iteration of {\sc Sinpal}, to maintain a small condition number based on  \eqref{eq:kappak}, one should choose small $\sigma_k$ but large $\tau_k$.
However, the convergence rate of {\sc Snipal} developed in Theorem \ref{thm:rate} {requires the opposite choice}, i.e., large $\sigma_k$  {and  $\tau_k$ should be moderate}. {The preceding discussion thus reveals the trade-off between the convergence rate of the ALM and the condition numbers of the Newton linear systems}. Clearly, in the implementation of {\sc Snipal}, {the} parameters
$\{\sigma_k\}$ and $\{\tau_k\}$ should be chosen to balance the progress of the outer and inner algorithms, i.e., the ALM and the semismooth Newton method.
}

\section{Warm-start algorithm for {\sc Snipal}}
As is mentioned in the introduction, to achieve high performance,
it is desirable to use a simple first-order algorithm to warm start {\sc Snipal}
{so that its local linear convergence behavior can be observed earlier}. For this purpose, we present an ADMM algorithm
 for solving (D). We note that {a} similar strategy has also been employed for solving large scale semidefinite programming and quadratic semidefinite programming problems \cite{Yang2015SDPNAL+,LST-QSDPNAL}.

 We begin by rewriting (D) into the following equivalent form:
 \begin{equation}\label{eq:d-admm}
 \min \left\{ \delta_K^*(-z) - b^T y \mid z + A^*y = c  \right\}.
 \end{equation}
 Given $\sigma>0$, the augmented Lagrangian function associated with
 \eqref{eq:d-admm} can be written as
 \[
 {\bf L}_{\sigma}(z,y;x) = \delta_K^*(-z) - b^T y + \inprod{x}{z + A^*y - c} + \frac{\sigma}{2}\norm{z + A^*y - c}^2
 \]
 for all $(x,y,z)\in \R^{n}\times \R^{m}\times \R^n$.
 The template of the classical ADMM for solving \eqref{eq:d-admm} is given as follows.
\begin{algorithm}[H]
	\caption{{\sc ADMM}: An ADMM method for solving \eqref{eq:d-admm}}
	\label{alg:admm}
	Given $(x^0,y^0)\in\R^n\times \R^m$ and $\gamma > 0$, %
     perform the following steps for $k=1,\ldots$,
	\begin{description}
		\item[\bf Step 1.]
		Compute
		\begin{equation}\label{eq:z}
		z^{k+1} = \argmin
		{\bf L}_{\sigma}(z,y^k;x^k)
		\;=\; {\frac{1}{\sigma}\left( \Pi_{K}(x^k + \sigma(A^*y^k - c)) - (x^k + \sigma( A^*y^k - c)) \right).}
		\end{equation}
		\item [\bf Step 2.]
		Compute
		\begin{equation}\label{eq:y}
		y^{k+1} = \argmin
		{\bf L}_{\sigma}(z^{k+1},y;x^k)
		\;=\; {(AA^*)^{-1}\Big( b/\sigma - A(x^k/\sigma + z^{k+1} - c)\Big).}
		\end{equation}
		\item[\bf Step 3.]
		Compute
		$
		x^{k+1} = x^k + \gamma \sigma (z^{k+1} + A^*y^{k+1} - c).$
	\end{description}
\end{algorithm}
The convergence of the above classical ADMM for solving the two-block optimization problem \eqref{eq:d-admm} with the steplength
 {$\gamma \in (0,(1 + \sqrt{5})/2)$} can be readily obtained from the vast literature on ADMM. Here, we adopt a newly developed result from \cite{chen2018equivalent} stating that the above ADMM is in fact an inexact proximal ALM. This %
 {new}
 interpretation allows us to choose the steplength $\gamma$ in the larger interval $(0,2)$ which usually leads a better numerical performance
 {when $\gamma$ is chosen to be $1.9$ instead of $1.618$}. We summarize the convergence results in the following theorem. The detailed proof %
can be found in \cite{chen2018equivalent}.
\begin{theorem}
	\label{thm:ADMM}
	Suppose that Assumption \ref{assump:fesPD} holds and {$\gamma \in (0,2)$}. Let $\{(x^k,y^k,z^k)\}$ be the sequence generated by Algorithm ADMM. Then, $\{x^k\}$ converges to an optimal solution of (P) and $\{(y^k,z^k)\}$ converges to an optimal solution of
	\eqref{eq:d-admm}, respectively.	
\end{theorem}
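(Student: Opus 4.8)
The plan is to prove Theorem~\ref{thm:ADMM} not by a direct analysis of the ADMM iterations but by reducing it to an already-available convergence theory, following \cite{chen2018equivalent}. First I would record that \eqref{eq:d-admm} is a genuine two-block convex composite problem $\min\{\delta_K^*(-z) - b^T y \mid z + A^*y = c\}$ in which both $z\mapsto\delta_K^*(-z)$ and $y\mapsto -b^T y$ are closed proper convex functions, and that under Assumption~\ref{assump:fesPD} the iterates in Algorithm~\ref{alg:admm} are well-defined: the full row rank of $A$ makes $AA^*$ invertible, so the $y$-subproblem in Step~2 has the stated closed form, while the $z$-subproblem in Step~1 has the displayed closed form via the Moreau identity. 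I would also note that, since (P) and (D) both have optimal solutions, LP strong duality yields a zero duality gap, so the KKT system associated with \eqref{eq:d-admm} — which, after eliminating $z = c - A^*y$, is exactly the system \eqref{eq:kktpd} with $x$ playing the role of the Lagrange multiplier — is solvable.

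The second step is to invoke the main equivalence result of \cite{chen2018equivalent}: the classical two-block ADMM with step length $\gamma$ applied to \eqref{eq:d-admm} coincides exactly, iterate by iterate, with an inexact semi-proximal augmented Lagrangian method for the same problem, for which the admissible step-length range is the larger interval $(0,2)$ rather than the classical $(0,(1+\sqrt{5})/2)$. Since each of the two ADMM subproblems here is solved exactly (both admit closed-form solutions), the ``inexact'' feature of that framework is vacuous, and the hypotheses needed to apply the convergence theorem of \cite{chen2018equivalent} reduce to precisely the facts established in the first step: closedness and convexity of the two objective pieces, solvability of the KKT system of \eqref{eq:d-admm}, and $\gamma\in(0,2)$.

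The third step is to read off the conclusion. The semi-proximal ALM viewpoint, through the proximal point framework in the spirit of \cite{rockafellar1976augmented} and of Section~\ref{sec:mPPA} of the present paper, then gives that the multiplier sequence $\{x^k\}$ converges to a KKT multiplier of \eqref{eq:d-admm}; by the identification above, this limit is an optimal solution of (P). Simultaneously $\{(z^k,y^k)\}$ converges to an optimal solution of \eqref{eq:d-admm}. This is exactly the assertion of the theorem.

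The one genuinely delicate point — and the reason the detailed proof is deferred to \cite{chen2018equivalent} — is that the proximal operator appearing in the equivalent proximal-ALM reformulation is positive \emph{semidefinite} rather than positive definite, so one cannot invoke the strongly-monotone preconditioned-PPA analysis of Theorem~\ref{thm:mPPA} directly; securing convergence in this semi-proximal regime, together with the extension of the step-length range to $(0,2)$, is precisely the technical contribution of \cite{chen2018equivalent}, which we therefore cite for the full argument.
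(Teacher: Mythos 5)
Your proposal is correct and follows essentially the same route as the paper: the paper likewise establishes Theorem~\ref{thm:ADMM} by interpreting Algorithm~\ref{alg:admm} as an inexact (semi-)proximal augmented Lagrangian method in the sense of \cite{chen2018equivalent}, which is what permits the steplength range $\gamma\in(0,2)$, and it defers the detailed argument to that reference. Your additional verification of the hypotheses (closed-form, well-defined subproblems via the invertibility of $AA^*$ under Assumption~\ref{assump:fesPD}, and solvability of the KKT system \eqref{eq:kktpd}) is consistent with what the cited result requires.
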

\begin{remark}
	In the above algorithm, one can also handle \eqref{eq:y} by adding an appropriate proximal term or by using {an iterative method} to solve the corresponding linear system. The convergence of the resulting proximal or inexact ADMM with 
	steplength {$\gamma\in (0,2)$} has also been discussed in \cite{chen2018equivalent}. For simplicity, we only discussed the exact version here.
\end{remark}
\section{Numerical experiments}

In this section, we evaluate the performance of {\sc Snipal} against
the powerful commercial solver Gurobi (version 8.0.1) on various LP data sets.
Our goal is to compare the performance of
our algorithm against {the barrier method implemented in} Gurobi in terms of its speed and
ability to solve the tested instances to the relatively
high accuracy of $10^{-6}$ or $10^{-8}$ in the relative KKT residual. {That is, 
for a given computed solution $(x,y,z)$, we stop the algorithm when}
\begin{eqnarray}
\eta = \max\Big\{ \frac{\norm{b-Ax}}{1+\norm{b}},\,
\frac{\norm{A^T y + z - c}}{1+\norm{c}}, \,
\frac{\norm{x- \Pi_K(x-z)}}{1+\norm{x}+\norm{z}}
\Big\} \; \leq \; {\tt Tol}
\end{eqnarray}
where {\tt Tol} is a given accuracy tolerance. 
{We should note that it is possible to solve an LP by using the 
primal or dual simplex methods in Gurobi, and those methods could 
sometimes be more efficient than the barrier method in solving large scale LPs.
However, as our {\sc Snipal} algorithm is akin to a barrier method, in that
each of its semismooth Newton iteration also requires the solution of a linear system having the 
form of a normal equation just as in the case of the barrier method, we thus restrict
the comparison of our algorithm only to the barrier method in Gurobi.
To purely use the barrier method in Gurobi, we also turn off its crossover capability from the barrier method to simplex methods.}
We should note that sometimes the presolve phrase in Gurobi is
too time consuming and does not lead to any reduction in the
problem size. In that case, we turn off the presolve phase in Gurobi
to {get the actual performance of its barrier method}.

All the numerical experiments in this paper are run in 
{\sc Matlab} on a Dell Laptop with Intel(R) Core i7-6820HQ CPU @2.70GHz and 16GB of RAM. 
{As Gurobi is extremely powerful in exploiting multi-thread computing, we set the number of 
threads allowed for Gurobi to be two so that its overall CPU utilization rate is roughly the same as that observed 
for running {\sc Snipal} in {\sc Matlab} when setting the maximum number of computational threads to be two.
}

\subsection{Randomly generated sparse LP in \cite{Mangasarian-04}}

Here we test  large synthetic LP problems generated
as in \cite{Mangasarian-04}. In particular, the matrix $A$ is generated as follows:
\begin{verbatim}
    rng(`default');  A = sprand(m,n,d);  A = 100*(A-0.5*spones(A));
\end{verbatim}
In this case, we turn off the presolve phase in Gurobi
as this phase is too time consuming for these randomly generated problems and
{it also does not lead to any reduction in the problem sizes.}
As we can observe from Table \ref{table-random},
{\sc Snipal} is able to outperform Gurobi by a factor of about {$1.5-2.3$} times in
computational time in most cases.

{Note that for
 the column ``iter (itssn)'' in Table \ref{table-random}, we report the number of
 {\sc Snipal} iterations and
the total number of semismooth Newton linear systems solved in Algorithm 2. {For the columns ``time (RAM)'' and ``Gurobi time (RAM)'', we report the wall-clock time and the memory consumed by {\sc Snipal} and Gurobi, respectively.}

\begin{table}[h]
\begin{footnotesize}
\begin{center}
\caption{Numerical results for random sparse LPs with ${\tt Tol} = 10^{-8}$. }
\label{table-random}

{
\begin{tabular}{|c|c|c|c|r|c|r|}
 \hline
\multicolumn{1}{|c}{$m$} &\multicolumn{1}{|c}{$n$} &\multicolumn{1}{|c}{$d$}
&\multicolumn{1}{|c}{$\begin{array}{c}\mbox{{\sc Snipal}} \\ \mbox{iter (itssn)}\end{array}$}  
 &\multicolumn{1}{|c|}{$\begin{array}{c}\mbox{{\sc Snipal}} \\ \mbox{time (s) (RAM)}\end{array}$}  
  &\multicolumn{1}{|c|}{$\begin{array}{c}\mbox{Gurobi} \\ \mbox{barrier iter}\end{array}$}  
 &\multicolumn{1}{|c|}{$\begin{array}{c}\mbox{Gurobi} \\ \mbox{time (s) (RAM)}\end{array}$}  
\\
\hline
2e3 & 1e5 & 0.025 & 4 (18)  &3.5 (0.8GB) & 7 &5.2 (1.0GB)
\\
  &  & 0.050 & 4 (18)  &6.8 (0.8GB)  & 7 &10.5 (1.3GB)
\\ \hline
5e3 & 1e5 & 0.025 & 4 (15)  &15.0 (1.5GB)  & 7 &22.4 (1.7GB)
\\
 &  & 0.050 & 4 (15)  &31.1 (1.8GB)   & 7 &46.1 (2.6GB)
\\ \hline
10e3 & 1e5 & 0.025 & 5 (24)  &  50.6 (3.2GB)  & 8 &96.6 (3.2GB)
\\
 &  & 0.050 & 5 (24)  &101.5  (5.3GB)  & 8  & 181.6 (6.0GB)
\\ \hline
1e3 & 1e6 & 0.025 & 5 (30)  &  10.3 (2.0GB) & 7 &22.2 (4.1GB)
\\
 &  & 0.050 & 5 (29)  &18.9 (3.2GB)   & 7 & 40.0 (6.0GB)
\\ \hline
2e3 & 1e6 & 0.025 & 6 (32)  &  27.2 (3.2GB) & 7 & 52.2 (6.1GB)
\\
 &  & 0.050 & 5 (28)  &53.1  (5.2GB)  &6 &  92.7 (9.6GB)
\\ \hline
5e3 & 1e6 & 0.025 & 5 (26)  &  91.8 (4.5GB) &7   & 184.1 (10.0GB)
\\ \hline
10e3 & 1e6 & 0.010 & 7 (40)  & 84.6 (4.3GB)  &7  & 194.4 (8.5GB)
\\ \hline
\end{tabular}
}
\end{center}
\end{footnotesize}
\end{table}

\subsection{Transportation problem}

In this problem, $s$ suppliers of $a_1,\ldots, a_s$ units of a certain goods must be transported to meet 
the demands $b_1,\ldots,b_t$ of $t$ customers. 
 Let the cost of transporting one unit of goods from supplier $i$  to customer $j$ be $c_{ij}$.
Then, the objective is to find a transportation plan denoted by $x_{ij}$ to solve the following LP:
\begin{eqnarray*}
\begin{array}{rl}
  \min & \sum_{i=1}^s \sum_{j=1}^t c_{ij} x_{ij} \\[5pt]
\mbox{s.t.} &  \sum_{j=1}^t x_{ij} = a_i, \quad i\in [s] \\[5pt]
&\sum_{i=1}^s x_{ij} = b_j,\quad j\in[t] \\[5pt]
& x_{ij} \geq 0,\; \forall\; i\in[s], \; j\in[t].
\end{array}
\end{eqnarray*}
In the above problem, we assume that $\sum_{i=1}^s a_i = \sum_{j=1}^t b_j$. 
{Note that this assumption is needed for the LP to be feasible.}
We can write the transportation LP compactly as follows:
\begin{eqnarray}
  \min \Big\{ \inprod{C}{X} \mid \cA(X) = [a; b], \; X \geq 0\Big\},
\end{eqnarray}
where
$$
\cA(X) = \left[ \begin{array}{c}
\hat{e}^T\otimes I_s
\\[5pt]
I_t\otimes e^T
\end{array}\right] {\rm vec}(X),
$$
$e\in \R^s$ and $\hat{e}\in\R^t$ are vector of all ones,
and ${\rm vec}(X)$ is the $st$-dimensional column vector obtained from $X$ by concatenating its columns
sequentially.

In {Table} \ref{tableTP}, we report the results for some randomly generated transportation instances.
For each pair of given $s,t$, we  generate a random transportation instance as follows:
\begin{verbatim}
 rng(`default'); M=abs(rand(s,t)); a=sum(M,2); b=sum(M,1)'; C=ceil(100*rand(s,t));
\end{verbatim}
{
Note that {
we turn off the presolve phase in Gurobi
as this phase is too time consuming (about 20--30\% of the total time)
and there is no benefit
in cutting down the computation time per iteration.}

We can observe that for this class of problems, {\sc Snipal} is able to
outperform the highly powerful {barrier method in} Gurobi by a factor of about {1--3 times}
in terms of computation times. Moreover,
our solver {\sc Snipal} consumed
less peak memory than Gurobi.
For the largest instance where the primal LP has 12,000 linear constraints and
27 millions variables, our solver is at least five times faster than {the barrier method in} Guorbi,
and it only needs 5.4GB of RAM whereas Gurobi required 12.8GB.
}

\begin{table}[h]
\begin{footnotesize}
\begin{center}
\caption{Numerical results for transportation LPs with ${\tt Tol} =10^{-8}$.}
\label{tableTP}
{
\begin{tabular}{|c|c|c|r|c|r|}
 \hline
 \multicolumn{1}{|c}{$s$} &\multicolumn{1}{|c}{$t$} 
&\multicolumn{1}{|c}{$\begin{array}{c}\mbox{{\sc Snipal}} \\ \mbox{iter (itssn)}\end{array}$}  
 &\multicolumn{1}{|c|}{$\begin{array}{c}\mbox{{\sc Snipal}} \\ \mbox{time (s) (RAM)}\end{array}$}  
  &\multicolumn{1}{|c|}{$\begin{array}{c}\mbox{Gurobi} \\ \mbox{barrier iter}\end{array}$}  
 &\multicolumn{1}{|c|}{$\begin{array}{c}\mbox{Gurobi} \\ \mbox{time (s) (RAM)}\end{array}$}  
\\
\hline
2000 & 3000 & 5 (17) & 18.3 (1.8GB) & 8 & 20.7 (4.8GB) 
\\
2000 & 4000 & 5 (18) & 22.0 (2.1GB)  &8 & 32.6 (6.5GB) 
\\
2000 & 6000 & 5 (18) & 34.2 (3.4GB) & 8  &59.5 (8.9GB) 
\\
\hline
3000 & 4500 & 5 (17)  & 40.4  (3.5GB) &8 &61.6 (9.2GB)  
\\
3000 & 6000 & 5 (18)  & 53.4 (4.0GB) &8 &93.9 (10.3GB)  
\\
3000 & 9000 &  5 (20)   & 65.1  (5.4GB)    &7 &191.1   (12.8GB)   
\\
\hline
\end{tabular}
}
\end{center}
\end{footnotesize}
\end{table}

\subsection{Generalized transportation problem}

The Generalized Transportation Problem (GTP) was introduced by Fergusan and Dantzig
\cite{GTP-Dantzig}
in their study of an aircraft routing problem. Eisemann and Lourie \cite{GTP-Eisemann}
applied it to the
machine loading problem.
 In that problem,
there are $m$ types of machines which can produce $n$ types of products
such that machine $i$ would take $h_{ij}$ hours at the cost of
$c_{ij}$ to produce one unit of product $j$.
It is
assumed that machine $i$ is available for at most $a_i$ hours, and the
demand for product $j$ is $b_j$. The problem is to determine
$x_{ij}$, the amount of product $j$ to be
produced on machine $i$ during the planning period
so that the total cost is minimized, namely,
\begin{eqnarray*}
\begin{array}{rl}
  \min & \sum_{i=1}^s \sum_{j=1}^t c_{ij} x_{ij} \\[5pt]
\mbox{s.t.} &  \sum_{j=1}^t h_{ij} x_{ij} = a_i, \quad i\in [s] \\[5pt]
&\sum_{i=1}^s x_{ij} = b_j,\quad j\in [t] \\[5pt]
& x_{ij} \geq 0,\; \forall\; i\in [s], \; j \in [t].
\end{array}
\end{eqnarray*}
In addition to assuming, {similar to the transportation problem in the previous subsection,} that $\sum_{i=1}^s a_i = \sum_{j=1}^t b_j$,
{we also apply the normalization} $\sum_{i=1}^s \sum_{j=1}^t h_{ij} = st.$

{Table \ref{tableGTP} presents the results for randomly generated
generalized transportation LPs where $a,b,c$ are generated as in the
last subsection. The weight matrix $H = (h_{ij})$ is generated by
setting {\tt  H = rand(s,t); H = (s*t/sum(sum(H)))*H.}
We can observe that  {\sc Snipal} can be up to 3 times faster than
{the barrier method in}
Gurobi when the problems are large.
}

\begin{table}[h]
\begin{footnotesize}
\begin{center}
\caption{Numerical results for generalized transportation LPs with ${\tt Tol}=10^{-8}$.}
\label{tableGTP}
{
\begin{tabular}{|c|c|c|r|c|r|}
 \hline
  \multicolumn{1}{|c}{$s$} &\multicolumn{1}{|c}{$t$} 
&\multicolumn{1}{|c}{$\begin{array}{c}\mbox{{\sc Snipal}} \\ \mbox{iter (itssn)}\end{array}$}  
 &\multicolumn{1}{|c|}{$\begin{array}{c}\mbox{{\sc Snipal}} \\ \mbox{time (s) (RAM)}\end{array}$}  
  &\multicolumn{1}{|c|}{$\begin{array}{c}\mbox{Gurobi} \\ \mbox{barrier iter}\end{array}$}  
 &\multicolumn{1}{|c|}{$\begin{array}{c}\mbox{Gurobi} \\ \mbox{time (s) (RAM)}\end{array}$}  
\\
\hline
2000 &3000 & 5 (19)  & 22.4 (1.6GB) &7 &19.4 (2.9GB) 
\\
2000 & 4000 & 5 (18)  & 27.1  (2.5GB) &8  &32.7 (5.5GB) 
\\
2000 & 6000 & 5 (18)  & 40.6 (3.6GB)  &8  &61.0 (8.0GB) 
\\
\hline
3000 & 4500 & 5 (18)  & 48.4 (3.4GB)  &8 &59.7 (6.0GB) 
\\
3000 & 6000 & 5 (18)  & 63.5 (4.0GB)  &8  &90.0 (12.9GB) 
\\
3000 & 9000 & 5 (19)  & 85.3  (5.8GB)  &7 & 258.0 (13.1GB) 
\\
\hline
\end{tabular}
}
\end{center}
\end{footnotesize}
\end{table}

\subsection{Covering and packing LPs}

Given a nonnegative matrix $A\in \R^{m\times n}$ and cost vector
$c\in \R^n_+$, the covering and packing LPs are defined by
\begin{eqnarray*}
 \mbox{(Covering)} && \min\Big\{ \inprod{c}{x} \mid A x \geq e, x\geq 0 \Big\}
 \\[5pt]
  \mbox{(Packing)} && \min\Big\{ \inprod{-c}{x} \mid A x \leq e, x\geq 0 \Big\}.
\end{eqnarray*}
It is easy to see that by adding a slack variable, the above problems can be
converted into the standard form expressed in (P).

In our numerical experiments in Table \ref{tableCP}, we generate  $A$ and $c$ randomly as follows:
\begin{center}
\begin{tt}
  rng(`default');  c = rand(n,1);  A = sprand(m,n,den);  A = round(A);
\end{tt}
\end{center}
{
Table \ref{tableCP} presents the numerical
performance of  {\sc Snipal} versus Gurobi on
some randomly generated large scale covering and packing LPs.
As we can observe, {\sc Snipal} is competitive against {the barrier method in} Gurobi
for solving these large scale LPs, and the former can
be up to {2.9 times faster than the barrier method in Gurobi.}
}

\begin{table}[h]
\begin{footnotesize}
\begin{center}
\caption{Numerical results for covering and packing LPs with ${\tt Tol} =10^{-8}$.}
\label{tableCP}
{
\begin{tabular}{|c|c|c|c|c|r|c|r|}
 \hline
 \multicolumn{1}{|c}{Type}
&\multicolumn{1}{|c}{$m$} &\multicolumn{1}{|c}{$n$}  &\multicolumn{1}{|c}{den}
&\multicolumn{1}{|c}{$\begin{array}{c}\mbox{{\sc Snipal}} \\ \mbox{iter (itssn)}\end{array}$}  
 &\multicolumn{1}{|c|}{$\begin{array}{c}\mbox{{\sc Snipal}} \\ \mbox{time (s)}\end{array}$}  
  &\multicolumn{1}{|c|}{$\begin{array}{c}\mbox{Gurobi} \\ \mbox{barrier iter}\end{array}$}  
 &\multicolumn{1}{|c|}{$\begin{array}{c}\mbox{Gurobi} \\ \mbox{time (s)}\end{array}$}  
\\
\hline
C & 1e3 & 5e5 & 0.2&  22 (148)  &49.8  &14 &62.1  
\\ \hline
C & 2e3 & 5e5 & 0.1 & 25 (151)  &103.3 &16  &90.6  
\\ \hline
C & 2e3 & 1e6 & 0.05 & 24 (160)  &90.4    &17 & 102.0 
\\ \hline
C & 3e3 & 5e6 & 0.02 & 24 (148)  &190.5    &22 &560.5  
\\
\hline \hline
P & 1e3 & 5e5 & 0.2 & 28 (160)  &  49.3  &12  &53.3  
\\ \hline
P & 2e3 & 5e5 & 0.1 & 29 (160)  & 97.0   &12 & 68.2  
\\ \hline
P & 2e3 & 1e6 & 0.05 & 30 (173)  &75.1    &15 &91.4  
\\
P & 3e3 & 5e6 & 0.02 &  26 (228)  & 259.8    & 20 & 500.2 
\\
\hline
\end{tabular}
}
\end{center}
\end{footnotesize}
\end{table}

\subsection{LPs arising from correlation clustering}

A correlation clustering problem \cite{BBC} is defined over an undirected graph
$G=(V,E)$ with $p$ nodes and edge weights $c_e \in \R$ (for each $e\in E$)
that is interpreted as a confidence measure of the similarity or dissimilarity
of the edge's end nodes.
In general, for $e=(u,v)\in E$, $c_e$ is given a negative value
if $u,v$ are dissimilar, and a positive value if $u,v$ are similar.
For the goal of finding a clustering that minimizes the disagreements,
the problem can be formulated as an integer programming
problem as follows. Suppose that
we are given a clustering  $\mathbb{S} =\{S_1,\ldots,S_N\}$ where each  $S_t\subset V$,
$t=1,\ldots,N$, denotes a cluster.
For each edge $e=(u,v)\in E$, set
$y_{e} = 0$ if $u,v\in S_t$ for some $t$, and  set $y_{e} =1$ otherwise.
Observe that $1-y_e$ is $1$ if $u,v$ are in the same cluster, and
$0$ if $u,v$ are in different clusters. Now define the constants
\begin{eqnarray*}
  m_e = |\min\{0,c_e\}|, \quad
       p_e = \max\{0,c_e\}.
\end{eqnarray*}
Then the cost of disagreements for the clustering $\mathbb{S}$ is given
by $\sum_{e\in E} m_e(1-y_e) + \sum_{e\in E} p_e y_e. $

A version of the correlation clustering
problem is to find a valid assignment (i.e., it satisfies the triangle inequalities)
of $y_e$ for all $e\in E$ to minimize the disagreements' cost.
We consider the relaxation of this integer program to get the following LP:
\begin{eqnarray*}
  \begin{array}{rl}
   \min & \sum_{(i,j)\in E} m_{ij}(1-y_{ij}) + \sum_{(i,j)\in E} p_{ij} y_{ij}
   \\[5pt]
   \mbox{s.t.} &
   - y_{ij} \leq 0, \;\; y_{ij} \leq 1\quad \forall\; (i,j)\in E
   \\[5pt]
   & -y_{ij} - y_{jk} + y_{ik} \leq 0 \quad \forall \; 1 \leq i < j < k\leq n,
   \;\mbox{such that}\;
   (i,j),(j,k),(i,k)\in E.
  \end{array}
\end{eqnarray*}
In the above formulation, we assumed that the edge set $E$ is a subset of
$\{ (i,j) \mid 1\leq i< j\leq p\}.$
Let $M$ be the  number of all possible triangles in $E$.
Define $\cT: \R^{|E|} \to \R^M$ to be the linear map that maps $y$ to all the
$M$ terms $-y_{ij} - y_{jk} + y_{ik}$ in the
triangle inequalities.
We can express the above LP in the dual form as follows:
\begin{eqnarray*}
  \inprod{m}{{\bf 1}} -\max \Big\{ \inprod{m-p}{y} \mid
  \left[\begin{array}{r}
   -I \\ I \\ \cT\end{array} \right] y \;\leq\; \left[ \begin{array}{c} 0
   \\ {\bf 1} \\ 0\end{array}\right]
   \Big\},
\end{eqnarray*}
and the corresponding primal LP is given
by
\begin{eqnarray}
  \inprod{m}{{\bf 1}} -
  \min \Big\{ \inprod{[0; {\bf 1};0]}{x} \mid [-I,\, I,\, \cT^*] x = m-p, \;
  x\in\R_+^{2|E|+M}\Big\}.
\label{eq-correlation-LP}
\end{eqnarray}
Observe that the primal LP has $|E|$ equality constraints and a large number of $2|E| + M$
variables.

In Table \ref{tableCC}, we evaluate the performance of our algorithm on
correlation clustering LPs on data that were used in \cite{VWG}.
{One can observe that for the LP problem \eqref{eq-correlation-LP}, our solver {\sc Snipal} is
much more efficient than the barrier method in Gurobi, and the former can be up to $117$ times faster for the
largest problem.
The main reason why {\sc Snipal} is able to outperform the barrier method in Gurobi lies in the 
fact that the former is able to make use of an iterative solver to solve the moderately well conditioned linear system
in \eqref{eq-H} rather efficiently in each semismooth Newton iteration, whereas for the latter, it has to rely on 
sparse Cholesky factorization to solve the associated normal equation and for this class of problems, 
computing the sparse Cholesky factorization is expensive. 
Under the column ``itminres'' in Table \ref{tableCC}, we report the average number of MINRES 
iterations needed to solve a single linear system of the form in \eqref{eq-H}. 
As one can observe, the average number of MINRES iterations is small compared to
the dimension of the linear system for all the tested instances. 
}

\begin{table}[h]
\begin{footnotesize}
\begin{center}
\caption{Numerical results for correlation clustering LPs with ${\tt Tol}=10^{-8}$.}
\label{tableCC}
{
\begin{tabular}{|c|c|c|c|c|r|c|r|}
 \hline
 \multicolumn{1}{|c}{Data}
&\multicolumn{1}{|c}{$p$} &\multicolumn{1}{|c}{$|E|$}  &\multicolumn{1}{|c}{$2|E|+M$}
&\multicolumn{1}{|c}{$\begin{array}{c}\mbox{{\sc Snipal}} \\ \mbox{iter (itssn $|$ itminres)}\end{array}$}  
 &\multicolumn{1}{|c|}{$\begin{array}{c}\mbox{{\sc Snipal}} \\ \mbox{time (s)}\end{array}$}  
  &\multicolumn{1}{|c|}{$\begin{array}{c}\mbox{Gurobi} \\ \mbox{barrier iter}\end{array}$}  
 &\multicolumn{1}{|c|}{$\begin{array}{c}\mbox{Gurobi} \\ \mbox{time (s)}\end{array}$}  
\\
\hline
 planted(5) & 200 & 19900 & 1353200 & 5 (70 $|$ 110.0) &38.1 & 37 &690.9 
 \\ \hline
planted(10) & 200 & 19900 & 1353200 & 6 (91 $|$ 146.5)  &  36.8  &49 &1146.6 
 \\ \hline
  planted(5) & 300 &44850 & 4544800 & 5 (86 $|$ 109.2)  & 170.2  &37 &8350.7
 \\ \hline
 planted(10) & 300 &44850 & 4544800 & 7 (127 $|$ 186.3)  & 158.0  &82  &18615.8 
 \\ \hline
stocks & 200 & 19900 & 1353200 &  5 (57 $|$ 147.7) & 57.8 &53 &1009.2 
\\ \hline
stocks & 300 & 44850 & 4544800 &  5 (75 $|$ 191.1) & 276.9 &60 &13797.0 
\\ \hline
\end{tabular}
}
\end{center}
\end{footnotesize}
\end{table}

\subsection{LPs  from  MIPLIB2010}

{
In this subsection, we evaluate the potential of {\sc Snipal} as a tool for solving 
general LPs with the characteristic that the number of linear constraints
are much smaller than the dimension of the variables. For this purpose, 
we consider the root-node LP relaxations of some mixed-integer programming
problems in the library MIPLIB2010 \cite{MIPLIB2010}.

Table \ref{tableMIPLIB} reports the performance of {\sc Snipal} against the 
barrier method in Gurobi for solving the LPs from the two sources mentioned
in the last paragraph
 to the accuracy level to $10^{-6}$. 
Note that we first use Gurobi's presolve
function to pre-processed the LPs. Then the pre-processed instances are used for
comparison with Gurobi's presolve capability turned off.
As one can observe, the barrier method in Gurobi performed much better
than {\sc Snipal}, with the former typically requiring less than 50 iterations to solve the LPs
while the latter typically needs hundreds of semismooth Newton iterations
except for a few problems such as {\tt datt256, neos-xxxx}, etc. 
Overall, the barrier method in Gurobi can be 10-50 times faster than {\sc Snipal}
on many of the tested instances, with the exception of {\tt ns2137859}.

The large number of semismooth Newton iterations needed by {\sc Snipal} 
to solve the LPs can be attributed to the fact that for most of the LP instances
tested here, the local superlinear convergent property of the semismooth Newton method
in solving the subproblems of the SPALM generally does not kick-in  
before a large number of initial iterations has been taken. 
From this limited set of tested LPs, we may conclude that
substantial numerical work must be done to improve the practical performance 
of {\sc Snipal} before it is competitive enough to solve 
general large scale sparse LPs.

\begin{footnotesize}
\begin{center}
\begin{longtable}{|l r|r| r|r|r|r|} 
\caption{Numerical results for some LPs  from MIPLIB2010 with ${\tt Tol}=10^{-6}$.}
\label{tableMIPLIB}
\\ \hline 
\mc{1}{|c}{problem}
&\mc{1}{c|}{$m$}
&\mc{1}{c|}{$n$}
&\mc{1}{c|}{it (itssn)} 
&\mc{1}{c}{time}
&\multicolumn{1}{|c|}{$\begin{array}{c}\mbox{Gurobi}\\ \mbox{barrier iter}\end{array}$}
&\multicolumn{1}{|c|}{$\begin{array}{c}\mbox{Gurobi}\\ \mbox{time}\end{array}$}
\\ 
\endfirsthead
\hline 
\mc{1}{|c}{problem}
&\mc{1}{c|}{$m$}
&\mc{1}{c|}{$n$}
&\mc{1}{c|}{it (itssn)} 
&\mc{1}{c}{time}
&\multicolumn{1}{|c|}{$\begin{array}{c}\mbox{Gurobi}\\ \mbox{barrier iter}\end{array}$}
&\multicolumn{1}{|c|}{$\begin{array}{c}\mbox{Gurobi}\\ \mbox{time}\end{array}$}
\\ \hline
\endhead

 \hline
\endfoot

 & &\mc{1}{c|}{} &\mc{1}{c|}{} &\mc{1}{c|}{} &\mc{1}{c|}{} &\mc{1}{c|}{}\\[-12pt] \hline 
\mc{1}{|r}{app1-2} 
&26850  &  107132 	 &33 (878) &54.33 & 16 &  0.73\\[2pt] 
\mc{1}{|r}{bab3} 
&22449  &  411334 	 &40 (789) &139.43 & 37 &  6.41\\[2pt] 
\mc{1}{|r}{bley-xl1} 
& 746  &  7361 	 & 7 (114) & 1.06 & 21 &  0.21\\[2pt] 
\mc{1}{|r}{circ10-3} 
&2700  &  46130 	 & 7 (17) & 4.98 & 10 &  0.80\\[2pt] 
\mc{1}{|r}{co-100} 
&1293  &  22823 	 &39 (634) & 4.09 & 23 &  0.64\\[2pt] 
\mc{1}{|r}{core2536-691} 
&1895  &  12991 	 &11 (325) &15.35 & 25 &  0.83\\[2pt] 
\mc{1}{|r}{core4872-1529} 
&3982  &  18965 	 &16 (285) &34.37 & 22 &  1.52\\[2pt] 
\mc{1}{|r}{datt256} 
&9809  &  193639 	 & 3 (37) &31.24 &  5 &  1.69\\[2pt] 
\mc{1}{|r}{dc1l} 
&1071  &  34931 	 &16 (209) & 8.86 & 39 &  1.06\\[2pt] 
\mc{1}{|r}{ds-big} 
&1039  &  173026 	 &27 (623) &74.91 & 25 &  5.37\\[2pt] 
\mc{1}{|r}{eilA101.2} 
& 100  &  65832 	 &14 (88) & 7.71 & 21 &  0.96\\[2pt] 
\mc{1}{|r}{ivu06-big} 
&1177  &  2197774 	 &19 (236) &87.28 & 27 & 34.97\\[2pt] 
\mc{1}{|r}{ivu52} 
&2116  &  135634 	 &31 (559) &47.19 & 23 &  3.22\\[2pt] 
\mc{1}{|r}{lectsched-1-obj} 
&9246  &  34592 	 &28 (331) & 2.46 & 12 &  0.32\\[2pt] 
\mc{1}{|r}{lectsched-1} 
&6731  &  27042 	 & 7 (15) & 0.26 &  5 &  0.15\\[2pt] 
\mc{1}{|r}{lectsched-4-obj} 
&2592  &  9716 	 &22 (94) & 0.76 &  7 &  0.06\\[2pt] 
\mc{1}{|r}{leo2} 
& 539  &  11456 	 &24 (106) & 0.68 & 22 &  0.16\\[2pt] 
\mc{1}{|r}{mspp16} 
&4065  &  532749 	 &26 (54) &68.67 & 14 & 59.88\\[2pt] 
\mc{1}{|r}{n3div36} 
&4450  &  25052 	 &20 (75) & 0.72 & 27 &  0.25\\[2pt] 
\mc{1}{|r}{n3seq24} 
&5950  &  125746 	 &14 (71) &20.25 & 15 &  6.42\\[2pt] 
\mc{1}{|r}{n15-3} 
&29234  &  153400 	 &22 (475) &63.86 & 30 &  4.30\\[2pt] 
\mc{1}{|r}{neos13} 
&1826  &  22930 	 &30 (154) & 9.83 & 22 &  0.23\\[2pt] 
\mc{1}{|r}{neos-476283} 
&9227  &  20643 	 &22 (495) &174.60 & 14 & 10.58\\[2pt] 
\mc{1}{|r}{neos-506428} 
&40806  &  200653 	 & 4 (8) & 7.86 & 16 &  0.96\\[2pt] 
\mc{1}{|r}{neos-631710} 
&3072  &  169825 	 & 4 (9) &13.90 &  5 &  0.54\\[2pt] 
\mc{1}{|r}{neos-885524} 
&  60  &  21317 	 & 4 (8) & 0.84 & 12 &  0.11\\[2pt] 
\mc{1}{|r}{neos-932816} 
&2568  &  8932 	 & 7 (17) & 2.88 & 10 &  0.14\\[2pt] 
\mc{1}{|r}{neos-941313} 
&12919  &  129180 	 & 6 (17) & 5.71 & 10 &  0.42\\[2pt] 
\mc{1}{|r}{neos-1429212} 
&8773  &  42620 	 &37 (541) &118.34 & 28 &  6.91\\[2pt] 
\mc{1}{|r}{netdiversion} 
&99482  &  208447 	 &33 (324) &173.07 & 15 &  5.38\\[2pt] 
\mc{1}{|r}{ns1111636} 
&12992  &  85327 	 & 4 (38) & 7.85 & 16 &  0.79\\[2pt] 
\mc{1}{|r}{ns1116954} 
&11928  &  141529 	 & 2 (6) &125.54 & 11 & 23.86\\[2pt] 
\mc{1}{|r}{ns1688926} 
&16489  &  41170 	 &26 (160) &150.80 & 88 & 12.68\\[2pt] 
\mc{1}{|r}{ns1904248} 
&38184  &  222489 	 & 3 (6) & 6.00 &  6 &  0.96\\[2pt] 
\mc{1}{|r}{ns2118727} 
&7017  &  15853 	 &30 (1079) &20.63 & 24 &  0.41\\[2pt] 
\mc{1}{|r}{ns2124243} 
&19663  &  53716 	 &22 (122) &13.08 & 14 &  0.36\\[2pt] 
\mc{1}{|r}{ns2137859} 
&16357  &  49795 	 &11 (22) & 6.20 & 50 & 61.53\\[2pt] 
\mc{1}{|r}{opm2-z12-s7} 
&10328  &  145436 	 &13 (43) &19.13 & 17 & 16.19\\[2pt] 
\mc{1}{|r}{opm2-z12-s14} 
&10323  &  145261 	 &12 (36) &19.46 & 16 & 15.39\\[2pt] 
\mc{1}{|r}{pb-simp-nonunif} 
&11706  &  146052 	 & 2 (4) & 2.08 & 10 &  0.68\\[2pt] 
\mc{1}{|r}{rail507} 
& 449  &  23161 	 &14 (240) & 1.95 & 23 &  0.25\\[2pt] 
\mc{1}{|r}{rocII-7-11} 
&5534  &  25590 	 &20 (73) & 2.26 & 17 &  0.35\\[2pt] 
\mc{1}{|r}{rocII-9-11} 
&8176  &  37159 	 &22 (106) & 3.85 & 17 &  0.51\\[2pt] 
\mc{1}{|r}{rvb-sub} 
& 217  &  33200 	 &24 (157) & 1.67 & 11 &  0.56\\[2pt] 
\mc{1}{|r}{shipsched} 
&5165  &  22806 	 &16 (35) & 0.73 & 10 &  0.24\\[2pt] 
\mc{1}{|r}{sp97ar} 
&1627  &  15686 	 &26 (264) & 2.63 & 26 &  0.33\\[2pt] 
\mc{1}{|r}{sp98ic} 
& 806  &  11697 	 &27 (155) & 1.63 & 25 &  0.25\\[2pt] 
\mc{1}{|r}{stp3d} 
&95279  &  205516 	 &71 (2892) &228.56 & 22 & 11.75\\[2pt] 
\mc{1}{|r}{sts729} 
& 729  &  89910 	 & 2 (4) & 0.72 &  3 &  0.26\\[2pt] 
\mc{1}{|r}{t1717} 
& 551  &  16428 	 &22 (141) & 1.59 & 13 &  0.22\\[2pt] 
\mc{1}{|r}{tanglegram1} 
&32705  &  130562 	 & 2 (4) & 0.90 &  5 &  0.30\\[2pt] 
\mc{1}{|r}{van} 
&7360  &  36736 	 & 4 (8) & 7.39 & 15 &  3.63\\[2pt] 
\mc{1}{|r}{vpphard} 
&9621  &  22841 	 & 3 (6) & 2.90 &  9 &  0.90\\[2pt] 
\mc{1}{|r}{vpphard2} 
&13085  &  28311 	 & 4 (6) & 2.77 &  7 &  0.64\\[2pt] 
\mc{1}{|r}{wnq-n100-mw99-14} 
& 594  &  10594 	 &24 (119) & 0.73 & 15 &  0.22\\[2pt] 

\hline
\end{longtable}
\end{center}
\end{footnotesize}

}
\section{Conclusion}

In this paper, we proposed a method called {\sc Snipal}
{targeted at solving large scale LP problems where the dimension $n$ of the 
decision variables  is much larger than the number $m$ of equality constraints.}
 {\sc Snipal} is an inexact proximal augmented Lagrangian method where the inner subproblems are {solved} via an efficient
semismooth Newton method. By connecting the inexact proximal augmented Lagrangian method with the preconditioned proximal point algorithm, we are able to show the global and local {asymptotic superlinear convergence} of {\sc Snipal}.
Our analysis also {reveals} that {\sc Snipal} can enjoy
a certain finite termination property.
To achieve high performance, we further {study various efficient approaches   for solving the large} linear systems in the semismooth Newton method. Our findings indicate that the linear systems involved in {\sc Snipal}
have uniformly bounded condition numbers, {in contrast to those 
 involved in an interior point algorithm which has unbounded condition numbers}. Building upon all
the aforementioned desirable properties, our algorithm {\sc Snipal}
has demonstrated a clear computational advantage in solving
{some classes of} large-scale  LP problems in the numerical experiments
when tested against the {barrier method in the 
powerful commercial LP solver Gurobi. 
However, when tested on some large sparse LPs available in the 
public domain, our {\sc Snipal} is not yet competitive against the barrier method in Gurobi on most of the 
test instances. Thus 
much work remains to be done to improve the practical efficiency of {\sc Snipal} and we leave
it as a future research project. 
}

\section*{Appendix}

Here we show that the dual of \eqref{eq:suby} with $\tau =0$ is given by
\eqref{eq-ppa-sub}.
Consider the augmented Lagrangian function
\begin{eqnarray*}
\inf_{y} L_{\sigma_k}(y;x^k) &= &
\inf_y \max_{x}\left\{
l(y;x) - \frac{1}{2\sigma}\norm{x-x^k}^2
\right\} 
= \max_x\left\{ - \frac{1}{2\sigma}\norm{x-x^k}^2
+ \inf_y l(y;x)
\right\} \\[5pt]
&=&  \max_x\Big\{ -\delta_K(x)-\inprod{c}{x} - \frac{1}{2\sigma}\norm{x - x^k}^2 \mid Ax = b \Big\},
\end{eqnarray*}
where $l(y;x) =  -b^T y - \inprod{x}{c - A^* y} - \delta_K(x)$ for any $(y,x)\in\R^m\times \R^n$.
The interchange of $\inf_y$ and $\max_x$ follows from the growth properties in $x$ of the ``minimaximand'' in question
\cite[Theorem 37.3]{rockafellar1970convex}. See also the proof of \cite[Proposition 6]{rockafellar1976augmented}.

\bibliographystyle{siamplain}

\end{document}